  \newcommand{\eq}[1][r]
   {\ar@<-3pt>@{-}[#1]
    \ar@<-1pt>@{}[#1]|<{}="gauche"
    \ar@<+0pt>@{}[#1]|-{}="milieu"
    \ar@<+1pt>@{}[#1]|>{}="droite"
    \ar@/^2pt/@{-}"gauche";"milieu"
    \ar@/_2pt/@{-}"milieu";"droite"}
\newtheorem*{thmq}{Theorem 1 in [Gue16-1]}
\newtheorem{thm}{Theorem}
\newtheorem{lemme}{Lemma}
\newtheorem{prop}{Proposition}
\newtheorem{ex}{Example}
\newtheorem{cor}{Corollary}
\newtheorem{thmr}{Result}
\DeclareMathOperator{\Tr}{Tr}
\begin{document}
\title{Centralizers of irreducible subgroups in the projective special linear group}

\author{Cl\'ement Gu\'erin\thanks{
              University of Luxembourg Campus Kirchberg, Mathematics Research Unit, BLG. 6, rue Richard Coudenhove-Kalergi, L-1359 Luxembourg,
            e-mail : clement.guerin@uni.lu} }
\maketitle

\begin{abstract}
In this paper, we classify  conjugacy classes of centralizers of irreducible subgroups in $PSL(n,\mathbb{C})$ using alternate modules a.k.a. finite abelian groups with an alternate bilinear form. When $n$ is squarefree, we prove that these conjugacy classes are classified by their isomorphism classes. More generally, we define a finite graph related to this classification whose combinatorial properties are expected to help us describe the stratification of the singular (orbifold) locus in some character varieties. 

\end{abstract}

\tableofcontents

\section{Definitions and results}\label{sec1}

A bad  subgroup of a complex reductive Lie group $G$ is an irreducible subgroup $H$ of $G$ (see the definition below) whose centralizer strictly contains the center of $G$. Sikora in \cite{Sik} and Florentino-Lawton in \cite{F-L} exhibited complex reductive Lie groups $G$ containing bad  subgroups. Remark that applying Schur's lemma, there are no bad  subgroups in $SL(n,\mathbb{C})$ nor in $GL(n,\mathbb{C})$. A first case (when $G$ is $PSL(p,\mathbb{C})$ and $p$ is a prime number)  has been extensively studied in \cite{Gue} from which we highlight :
 
\begin{thmq}\label{thmgue}
If $\overline{H}$ is a bad subgroup of $PSL(p,\mathbb{C})$ then its centralizer $Z_{PSL(p,\mathbb{C})}(\overline{H})$ is either isomorphic to $\mathbb{Z}/p$ or $\mathbb{Z}/p\times \mathbb{Z}/p$, in the later case the irreducible subgroup is ts own centralizer. Furthermore if two centralizers of irreducible subgroups of $PSL(p,\mathbb{C})$ are isomorphic then they are conjugate. 
\end{thmq}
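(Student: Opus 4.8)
The plan is to lift everything to $SL(p,\mathbb{C})$ along the projection $\pi\colon SL(p,\mathbb{C})\to PSL(p,\mathbb{C})$, whose kernel $Z$ is the group of scalar matrices $\zeta I$ with $\zeta^{p}=1$; note that $PSL(p,\mathbb{C})=PGL(p,\mathbb{C})$ since $p$ is prime and $\mathbb{C}^{\times}=(\mathbb{C}^{\times})^{p}$, so in particular $PSL(p,\mathbb{C})$ has trivial centre. Write $H=\pi^{-1}(\overline H)$ (which acts irreducibly on $V=\mathbb{C}^{p}$), $C=Z_{PSL(p,\mathbb{C})}(\overline H)$, $N=\pi^{-1}(C)$. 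First I would show $C$ is a finite elementary abelian $p$-group: for $g\in N$ the assignment $h\mapsto ghg^{-1}h^{-1}$ lands in $Z$ (because $\overline g$ commutes with $\overline H$), is a homomorphism $H\to Z$ ($Z$ being central in $H$), and $g\mapsto(h\mapsto ghg^{-1}h^{-1})$ is a homomorphism $N\to\operatorname{Hom}(H,Z)$ whose kernel is $Z_{SL(p,\mathbb{C})}(H)=Z$ by Schur's lemma; hence $C=N/Z\hookrightarrow\operatorname{Hom}(H,\mathbb{Z}/p)$, and since $C$ is also a Zariski-closed subgroup of $PSL(p,\mathbb{C})$ (a centralizer) of exponent $p$, it is finite, say $C\cong(\mathbb{Z}/p)^{k}$; as $Z(PSL(p,\mathbb{C}))=1$, ``$\overline H$ bad'' means $k\ge 1$. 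The key reduction is that $\overline G:=\overline H\cdot C$ is a subgroup ($C$ being central in it), still irreducible, whose preimage $G=\pi^{-1}(\overline G)$ has centre exactly $Z$ (Schur) and for which $Z(\overline G)=C$; so it suffices to bound $|Z(\overline G)|$ and to pin down $\pi^{-1}(Z(\overline G))$ up to conjugacy when $\overline G$ is irreducible with $Z(\pi^{-1}\overline G)=Z$.

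Next I would apply Clifford theory to $L:=\pi^{-1}(C)\trianglelefteq G$ (note $[G,L]\subseteq Z\supseteq[L,L]$), restricting $V$ to $L$; because $\dim V=p$ is prime the decomposition collapses to two cases. If $L$ acts irreducibly on $V$: then $Z(L)=Z$ (Schur), the commutator pairing $C\times C\to Z$ is nondegenerate, $L$ is a Heisenberg-type group, and since its unique irreducible representation with the given faithful central character has dimension $p^{k/2}$ (and equals $V$) we get $k=2$ (so $L$ is extraspecial of order $p^{3}$); re-running the first paragraph with the irreducible group $L$ in place of $H$ gives $Z_{PSL(p,\mathbb{C})}(C)=C$, so $\overline H\subseteq C$, and as an irreducible subgroup cannot lie in a proper (hence cyclic, hence with abelian preimage) subgroup of $(\mathbb{Z}/p)^{2}$ we conclude $\overline H=C$ --- the self-centralizing case. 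Otherwise $L$ acts through $p$ distinct characters with $1$-dimensional eigenspaces (the remaining Clifford possibility forces $L$ scalar, i.e.\ $C=1$): then $L$ is abelian, $G$ is monomial with diagonal part $D=Z_{G}(L)$ and $G/D\cong\mathbb{Z}/p$ (a transitive abelian subgroup of $S_{p}$ is regular), and the homomorphism $\psi\colon D\to(\text{diagonal matrices})$, $d\mapsto [d,\sigma]$ for a generator $\sigma$ of $G/D$, has $\ker\psi=Z$ and $\psi(L)\subseteq Z$, whence $|L|\le p^{2}$, so $k=1$; a short diagonalisation then writes $L$, up to conjugacy, as $\langle Z,\ \operatorname{diag}(1,\zeta,\dots,\zeta^{p-1})\rangle$ for a primitive $p$-th root $\zeta$, so $C$ lies in one conjugacy class of order-$p$ subgroups of $PSL(p,\mathbb{C})$.

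For the last assertion I would argue as follows. Given irreducible $\overline{H_{1}},\overline{H_{2}}$ with $C_{i}:=Z_{PSL(p,\mathbb{C})}(\overline{H_{i}})$ and $C_{1}\cong C_{2}$: if both are trivial we are done; otherwise both are bad, so each $C_{i}$ is $\mathbb{Z}/p$ (second case above) or $(\mathbb{Z}/p)^{2}$ (first case). If $C_{i}\cong\mathbb{Z}/p$, the normal form just obtained puts both $C_{i}$ in the single conjugacy class of $\langle\,\overline{\operatorname{diag}(1,\zeta,\dots,\zeta^{p-1})}\,\rangle$, hence conjugate. If $C_{i}\cong(\mathbb{Z}/p)^{2}$, then $\overline{H_{i}}=C_{i}$ and the preimages $L_{i}$ are extraspecial of order $p^{3}$ acting irreducibly on $V$ with $Z(L_{i})=Z$ acting tautologically; a determinant computation shows the exponent-$p^{2}$ extraspecial group (resp.\ the dihedral group when $p=2$) cannot embed this way, so $L_{1}\cong L_{2}$; choosing an isomorphism $L_{1}\to L_{2}$ that restricts to the identity on $Z$ --- possible because $\operatorname{Aut}(L_{i})\twoheadrightarrow\operatorname{Aut}(Z(L_{i}))$ --- and invoking uniqueness of the irreducible $p$-dimensional representation of an extraspecial $p$-group with prescribed faithful central character, one gets $g\in GL(p,\mathbb{C})$ with $gL_{1}g^{-1}=L_{2}$; rescaling $g$ into $SL(p,\mathbb{C})$ gives $\overline g\,C_{1}\,\overline g^{-1}=C_{2}$.

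I expect the main obstacle to be the second paragraph: making the Clifford decomposition of $V|_{L}$ genuinely collapse (which uses primality of $p$ together with the normality and commutator bounds for $L$ in $G$), and then, in the non-irreducible case, squeezing out both the sharp bound $|L|=p^{2}$ and the explicit normal form. One must also be careful that the argument never secretly assumes $\overline H$ is finite (it need not be, but $C$ is forced finite in the first paragraph), and verify the ancillary facts used at the end: the determinant obstruction picking out the correct extraspecial group, the surjection $\operatorname{Aut}(L)\twoheadrightarrow\operatorname{Aut}(Z(L))$, and the classical uniqueness of the relevant irreducible representation, with the small exceptional behaviour at $p=2$ checked by hand.
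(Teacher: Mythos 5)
Your proposal is correct in outline, but it takes a genuinely different route from the one this paper uses (the statement is quoted from [Gue16-1]; here it is recovered as the case $n=p$ of the general machinery of Sections \ref{sec2}--\ref{sec3}). Your first paragraph coincides with Lemma \ref{2l1} and Proposition \ref{2p1}: the commutator pairing embeds the centralizer into $\operatorname{Hom}(H,\mathbb{Z}/p)$, giving an elementary abelian $p$-group. After that the methods diverge. You bound the rank by Clifford theory applied to $L=\pi^{-1}(C)\trianglelefteq G$, exploiting that $p=etm$ prime collapses the decomposition into the scalar, monomial and irreducible cases, and you then invoke the structure theory of extraspecial $p$-groups (Stone--von Neumann uniqueness, the determinant obstruction ruling out the exponent-$p^{2}$ type and $D_4$) to get both the self-centralizing statement and the conjugacy classification. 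The paper instead gets the bound $|Z_n(H)|\mid n^2$ for \emph{all} $n$ from the character-norm identity $\|\chi_H\|^2=n^3/|U_n(H)|\in\mathbb{N}$ (Proposition \ref{3.car} and Corollary \ref{3.ncardinal}), gets the self-centralizing statement from Proposition \ref{full}, and replaces your extraspecial-group argument by the fact that two subgroups $U_n(H_1),U_n(H_2)$ admitting an abstract isomorphism fixing the center pointwise have equal characters and are therefore conjugate (Corollary \ref{conjugclass}), combined with the alternate-module formalism and Lemma \ref{modsqf} for squarefree $n$. The trade-off: your argument is more explicit and self-contained (normal forms, identification of $L$ as Heisenberg or diagonal), but it uses primality of $p$ essentially and would not generalize; the paper's argument is uniform in $n$, which is the whole point of the present article. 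Two small points you should make explicit when writing it up: the image of $G$ in $S_p$ is abelian because $G$ acts on the $p$ characters of $L$ by translation by characters of $L/Z$ (this is what legitimizes ``transitive abelian hence regular''), and for $p=2$ the diagonal normal form must carry the scalar $\sqrt{\xi^{-1}}$ to land in $SL(2,\mathbb{C})$, exactly as in Proposition \ref{3.diagonal}.
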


This result  leads to a  decomposition of the singular locus of the character variety  (see loc. cit. for a definition and also paragraph 7 of \cite{F-L-R}) for Fuchsian groups into $PSL(p,\mathbb{C})$. In this paper, we generalize the classification of centralizers of irreducible subgroups of $PSL(p,\mathbb{C})$ to $PSL(n,\mathbb{C})$. We recall some definitions for  this paper.

\bigskip

A subgroup $P$ of a reductive group $G$ is said to be $\textit{parabolic}$ if $G/P$ is a complete variety. When  $G$ is $SL(n,\mathbb{C})$ a subgroup is parabolic if and only if it is the stabilizer of a non-trivial flag in  $\mathbb{C}^n$ where $SL(n,\mathbb{C})$ acts canonically on $\mathbb{C}^n$  (c.f. \cite{Bor}).

A subgroup $H$ of a reductive group $G$ is said to be \textit{irreducible} if no parabolic subgroup of $G$ contains $H$. A subgroup $H$ of a reductive group $G$ is said to be \textit{completely reducible} if for each parabolic subgroup $P$ of $G$ containing $H$, we can find  a Levi subgroup $L$ of $P$ such that $H\subseteq L$. A representation $\rho:\Gamma\rightarrow G$ is said to be \textit{irreducible} (resp. \textit{completely reducible}) if $\rho(\Gamma)$ is irreducible (resp. completely reducible).

The \textit{centralizer}  $Z_G(H)$ of a subgroup $H$ of $G$ is the set of elements $g\in G$ commuting with any element of $H$. The \textit{centralizer} $Z_G(\rho)$ of a representation $\rho: \Gamma\rightarrow G$ is the centralizer of $\rho(\Gamma)$.

Sikora gave   a useful characterization of an irreducible group (corollary 17 in  \cite{Sik}). A group $H$ in a reductive group $G$ is irreducible if and only if it is completely reducible and $[Z_G(H):Z(G)]$ is finite. 

Furthermore any finite group is a completely reducible subgroup and finite extensions of completely reducible subgroups are completely reducible subgroups.

The \textit{commutator} $[g,h]$ of $g$ and $h$ in a group $G$ will classically be defined as $ghg^{-1}h^{-1}$.  We recall a simple lemma from a previous paper :
 
\begin{lemme}\label{2.2l1}

Let $n\geq 1$, $A,B$ be two matrices in $GL(n,\mathbb{C})$ and $\lambda\in \mathbb{C}^*$ such that their commutator verifies  $[A,B]=\lambda I_n$, then for all $\mu\in \mathbb{C}^*$, $A(E_{\mu}(B))=E_{\lambda^{-1}\mu}(B)$ and $B(E_{\mu}(A))=E_{\lambda \mu}(A)$. In particular, $B$ acts on $Sp(A)$ by multiplying by $\lambda$ and $A$ acts on $Sp(B)$ by multiplying by $\lambda^{-1}$.

\end{lemme}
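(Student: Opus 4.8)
The plan is to turn the hypothesis into a conjugation identity and then simply track what $A$ and $B$ do to each other's eigenvectors. From $[A,B]=ABA^{-1}B^{-1}=\lambda I_n$ one reads off the equivalent relations $AB=\lambda BA$ and $BA=\lambda^{-1}AB$. Conjugating $AB=\lambda BA$ suitably then yields the companion identities $A^{-1}BA=\lambda^{-1}B$, i.e. $[A^{-1},B]=\lambda^{-1}I_n$, and likewise $[B,A]=\lambda^{-1}I_n$. These purely formal manipulations are all the input the proof needs.

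First I would establish the inclusion $A(E_\mu(B))\subseteq E_{\lambda^{-1}\mu}(B)$: for $v\in E_\mu(B)$ one computes $B(Av)=(BA)v=\lambda^{-1}(AB)v=\lambda^{-1}A(Bv)=\lambda^{-1}\mu\,(Av)$, so $Av$ lies in the $\lambda^{-1}\mu$-eigenspace of $B$. Applying this same computation verbatim to the pair $(A^{-1},B)$, which satisfies $[A^{-1},B]=\lambda^{-1}I_n$, gives $A^{-1}(E_\nu(B))\subseteq E_{\lambda\nu}(B)$ for every $\nu\in\mathbb{C}^*$. Taking $\nu=\lambda^{-1}\mu$ and then applying the invertible map $A$ to both sides produces the reverse inclusion $E_{\lambda^{-1}\mu}(B)\subseteq A(E_\mu(B))$, hence the equality $A(E_\mu(B))=E_{\lambda^{-1}\mu}(B)$. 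The second identity $B(E_\mu(A))=E_{\lambda\mu}(A)$ is obtained by the identical argument after observing that the pair $(B,A)$ satisfies $[B,A]=\lambda^{-1}I_n$; concretely, one swaps the roles of $A$ and $B$ and replaces $\lambda$ by $\lambda^{-1}$.

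For the final ``in particular'' clause, I would note that since $A$ is invertible, $E_\mu(B)\neq\{0\}$ if and only if $A(E_\mu(B))=E_{\lambda^{-1}\mu}(B)\neq\{0\}$; that is, $\mu\in Sp(B)\iff \lambda^{-1}\mu\in Sp(B)$, so multiplication by $\lambda^{-1}$ is a well-defined permutation of $Sp(B)$ implemented by $A$, and symmetrically multiplication by $\lambda$ is a permutation of $Sp(A)$ implemented by $B$.

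I do not anticipate a genuine obstacle here; the lemma is essentially a one-line computation. The only two points requiring mild care are the direction of the scalar (which factor gets $\lambda$ versus $\lambda^{-1}$), pinned down entirely by the convention $[g,h]=ghg^{-1}h^{-1}$ fixed just above the statement, and the observation that invertibility of $A$ (respectively $B$) is what upgrades the easy inclusion into an equality of eigenspaces.
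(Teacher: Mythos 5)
Your proof is correct. The paper itself gives no proof of this lemma (it is recalled from [Gue16-1]), but your argument — deriving $AB=\lambda BA$ from the commutator convention, computing $B(Av)=\lambda^{-1}\mu(Av)$ for $v\in E_\mu(B)$ to get one inclusion, and using the pairs $(A^{-1},B)$ and $(B,A)$ with commutators $\lambda^{-1}I_n$ to get the reverse inclusion and the symmetric statement — is the standard one, with the scalar directions and the role of invertibility handled correctly.
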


For $n\geq 1$, the center of $SL(n,\mathbb{C})$ is cyclic of order $n$. For $d$ dividing $n$,  denote $\pi_d(SL(n,\mathbb{C}))$ the quotient of $SL(n,\mathbb{C})$ by the unique central subgroup of order $d$. This gives all the quotients of $SL(n,\mathbb{C})$, in particular $\pi_n(SL(n,\mathbb{C}))=PSL(n,\mathbb{C})$.

\bigskip

The first result of this paper (see proposition \ref{2p1} and corollary \ref{3.ncardinal}) in subsection \ref{sec2sub1} :

\begin{thmr}\label{r1}

Let $n\geq 1$ and $\overline{H}$ be an irreducible subgroup of $PSL(n,\mathbb{C})$, then $Z_{PSL(n,\mathbb{C})}(\overline{H})$ is abelian, of exponent dividing $n$ and of order dividing $n^2$. 

\end{thmr}

In subsections \ref{sec2sub2} and \ref{sec3sub1}, we classify conjugacy classes of centralizers of irreducible subgroups in $PSL(n,\mathbb{C})$ using alternate modules which are, by definition, abelian finite groups endowed with an alternate bilinear form, see \cite{T-W} or \cite{Wal}. We show, in proposition \ref{3.assoc}, that we can associate to any centralizer of an irreducible subgroup in $PSL(n,\mathbb{C})$, a unique isometry class of alternate modules. According to this association, we get theorem \ref{classif1}   :

\begin{thmr}\label{r2}

Let $n$ be a positive integer and $Z_1$, $Z_2$ be two centralizers of irreducible subgroups in $PSL(n,\mathbb{C})$, then $Z_1$ and $Z_2$ are conjugate if and only if their respective associated alternate modules are isometric. 

\end{thmr}

Then, we give a necessary and sufficient condition for an alternate module to be associated to a centralizer of irreducible subgroup in $PSL(n,\mathbb{C})$ in theorem \ref{classif2} :

\begin{thmr}\label{r3}

Let $n\geq 1$ and $(A,\phi)$ be an alternate module then the following assertions are equivalent :

1. There exists an irreducible subgroup of $PSL(n,\mathbb{C})$ such that the alternate module associated to its centralizer is isometric to $(A,\phi)$.

2. The order of Lagrangians in $(A,\phi)$ divides $n$.

3. There exists an abelian group $B$ of order $n$ such that $(A,\phi)$ is isometrically embedded in the symplectic module $B\times B^*$.

\end{thmr}

This theorem is the generalization of theorem 1 in \cite{Gue}. Alternate modules verifying the assertion 3 will be referred to as $n$-\textit{subsymplectic} modules. When $n=p$ is prime,   conjugacy classes of centralizers are classified by their isomorphism class. In the general case, we prove theorem \ref{sqf} and corollary \ref{sqfnumber} in subsection~\ref{sec3sub2}~:

\begin{thmr}\label{r4}
  Conjugacy classes of centralizers of irreducible subgroups in $PSL(n,\mathbb{C})$ are classified by their isomorphism classes if and only if $n$ is squarefree. When $n$ is squarefree,  there are exactly $3^r$ conjugacy classes of centralizers of irreducible subgroups in $PSL(n,\mathbb{C})$ where $r$ is the number of distinct prime numbers dividing $n$.

\end{thmr}

After this, we define $(\mathcal{M}_n,\leq)$ to be the set of isometry classes of $n$-subsymplectic modules ordered with the inclusion up to isometry. This allows us to draw a graph $G_n$ ruling those inclusions (see  examples \ref{Mp1}, \ref{Mp2} and~\ref{Mp3}). When $\Gamma$ is a finitely generated group,   we show in proposition \ref{duality}, that, by duality,  this gives a stratification of the singular locus of the character variety $\chi_{Sing}^i(\Gamma,PSL(n,\mathbb{C}))$.  In particular, one should be able to prove similar results to those that were proven in \cite{Gue} when $\Gamma$ is Fuchsian.

\bigskip

Finally, in section \ref{sec4}, we deal with similar questions in partial quotients of $SL(n,\mathbb{C})$. We first get theorem \ref{casd} which generalizes result \ref{r1} :

\begin{thmr}\label{r5}

Let $n$ be a positive integer, $d$ be a divisor of $n$ and $\overline{H}$ be an irreducible subgroup of $\pi_d(SL(n,\mathbb{C}))$. Then its centralizer in $\pi_d(SL(n,\mathbb{C}))$ is abelian of exponent dividing $lcm(n/d,d)$ and of order dividing $n^3/d$. If $gcd(n/d,d)=1$ then the order of its centralizer in $\pi_d(SL(n,\mathbb{C}))$ divides  $nd$. 

\end{thmr}

Likewise, propositions \ref{dclassif1} and \ref{dclassif2} respectively generalize  results \ref{r2} and \ref{r3}. From section~\ref{sec4}, we deduce that classifying conjugacy classes of centralizers in $PSL(n,\mathbb{C})$ is equivalent to classifying conjugacy classes of centralizers in all quotients of $SL(n,\mathbb{C})$. In corollary \ref{isocarvar}, we characterize the isotropy group of the corresponding character variety when it is an orbifold (e.g. when $\Gamma$ is Fuchsian, see \cite{Sik}).

\section{Properties of centralizers of irreducible in $PSL(n,\mathbb{C})$}\label{sec2}

In this section, we will demonstrate that any centralizer of an irreducible subgroup in $PSL(n,\mathbb{C})$ is abelian, of bounded exponent and  of bounded order (first subsection). In the second subsection, we shall see how to associate an alternate module to any centralizer of an irreducible subgroup in $PSL(n,\mathbb{C})$. The correspondence will be proven to be faithful.

\begin{lemme}\label{2l1}

Let $G$ be a group and $N$ a subgroup of $Z(G)$, the center of $G$. Define $\pi:G\rightarrow G/N$ the quotient map.   If $\overline{H}$ is a subgroup of $G/N$, define $H:=\pi^{-1}(\overline{H})$ and  $  U:=\pi^{-1}(Z_{G/N}(\overline{H})) $. The function

\begin{displaymath}
\phi :
\left|
  \begin{array}{rcl}
    U& \longrightarrow &Mor(H,N) \\
   u& \longmapsto & (h\mapsto [u,h]) \\
  \end{array}
\right.
\end{displaymath}

is well defined. It is a group morphism whose kernel is $Z_G(H)$.

\end{lemme}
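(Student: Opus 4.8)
The plan is to establish the three assertions — that $\phi$ is well defined, that it is a group morphism, and that $\ker\phi = Z_G(H)$ — by unwinding the definitions and invoking the two elementary commutator identities, the centrality of $N$ doing all the real work.

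For well-definedness, I would first note that $U$ is a subgroup of $G$ (the preimage of the subgroup $Z_{G/N}(\overline H)$) and that $\pi(H)=\overline H$. Given $u\in U$ and $h\in H$, applying the morphism $\pi$ yields $\pi([u,h])=[\pi(u),\pi(h)]=1$, since $\pi(u)$ centralizes $\overline H\ni\pi(h)$; hence $[u,h]\in\ker\pi=N$, so $\phi(u)$ takes values in $N$. That $\phi(u)\colon h\mapsto[u,h]$ is a homomorphism follows from the identity $[u,h_1h_2]=[u,h_1]\,h_1[u,h_2]h_1^{-1}$: since $[u,h_2]\in N\subseteq Z(G)$ it is fixed under conjugation by $h_1$, so $[u,h_1h_2]=[u,h_1][u,h_2]$. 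Because $N$, being central, is abelian, $Mor(H,N)$ is a group under pointwise multiplication, so $\phi$ genuinely lands in it.

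That $\phi$ is a morphism is the identity $[u_1u_2,h]=u_1[u_2,h]u_1^{-1}[u_1,h]$ read modulo the same observation: $[u_2,h]\in N\subseteq Z(G)$ is central, giving $[u_1u_2,h]=[u_2,h][u_1,h]=[u_1,h][u_2,h]$, i.e. $\phi(u_1u_2)=\phi(u_1)\phi(u_2)$ pointwise. Finally, for the kernel, $u\in U$ lies in $\ker\phi$ exactly when $[u,h]=1$ for all $h\in H$, that is, when $u\in Z_G(H)$; and conversely any $u\in Z_G(H)$ satisfies $\pi(u)\in Z_{G/N}(\pi(H))=Z_{G/N}(\overline H)$, so $u\in U$. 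Hence $\ker\phi=U\cap Z_G(H)=Z_G(H)$.

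I do not anticipate a genuine obstacle: the whole argument is bookkeeping with commutators, and the single recurring point — the one place the hypothesis $N\subseteq Z(G)$ intervenes — is that each commutator identity produces an auxiliary factor lying in $N$, which then either drops out of a conjugation or commutes freely with everything in sight.
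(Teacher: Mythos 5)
Your proof is correct and follows essentially the same route as the paper: the same commutator identities with the centrality of $N$ doing the work, the same verification of well-definedness, and the same kernel computation. If anything, you are slightly more thorough, since you spell out the identity $[u_1u_2,h]=u_1[u_2,h]u_1^{-1}[u_1,h]$ for the morphism property of $\phi$ itself (which the paper dispatches with ``the exact same kind of argument'') and you check explicitly that $Z_G(H)\subseteq U$.
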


\begin{proof}

Let $u\in U$ and $h\in H$ then $\pi([u,h])=[\pi(u),\pi(h)]=N$ since $\pi(u)$ centralizes $\pi(h)$ by definition. Furthermore, if $h_1,h_2\in H$ then  :

$$[u,h_1h_2]=uh_1h_2u^{-1}h_2^{-1}h_1^{-1}=uh_1(u^{-1}u)h_2u^{-1}h_2^{-1}h_1^{-1}=uh_1u^{-1}[u,h_2]h_1^{-1}$$

Since $[u,h_2]$ is central, $[u,h_1h_2]=uh_1u^{-1}h_1^{-1}[u,h_2]=[u,h_1][u,h_2]$. As a result, $\phi_u:h\mapsto [u,h]$ is a morphism from $H$ to $N$ and $\phi$ is well defined.  Using the exact same kind of argument,  $\phi$ is easily proven to be itself a group morphism.

An element $u$ in  $U$ belongs to  $Ker(\phi)$ if and only if for all $h\in H$ we have  $[u,h]=1_G $, if and only if $u\in Z_G(H)$.  \end{proof}

 Once $n$ is given, $\xi$ will always denote a fixed primitive $n$-th root of the unity in the complex field. Let $n$ be a positive integer and $d$ a divisor of $n$. We define the natural projection  $\pi_d:SL(n,\mathbb{C})\rightarrow SL(n,\mathbb{C})/\langle\xi^{\frac{n}{d}}I_n\rangle$.

\bigskip

Quotients of $SL(n,\mathbb{C})$ are of the form $\pi_d(SL(n,\mathbb{C}))$, where $d$ divides $n$. We will study centralizers of their irreducible subgroups. Let $H$ be a subgroup of $SL(n,\mathbb{C})$, we define its $d$-\textit{centralizer} $Z_d(H):=Z_{\pi_d(SL(n,\mathbb{C}))}(\pi_d(H))\leq \pi_d(SL(n,\mathbb{C}))$.

\bigskip

Working in $SL(n,\mathbb{C})$ rather than in its quotients is a natural thing to do. Therefore, we define for $d$ dividing $n$ and a subgroup $H$  of $SL(n,\mathbb{C})$, the $d$-\textit{extended centralizer} $U_d(H)$ of $H$ as the pull-back of $Z_d(H)$ by $\pi_d$. From the very definition of $U_d(H)$ :

$$U_d(H)=\{g\in SL(n,\mathbb{C})\mid \forall h\in H, \exists k\in \mathbb{Z}/d\text{ such that } [g,h]=\xi^{\frac{n}{d}k}I_n\}. $$

\bigskip

 Remark that if $\overline{H}$ is an irreducible subgroup of $\pi_d(SL(n,\mathbb{C}))$ then $H:=\pi_d^{-1}(\overline{H})$ is an irreducible subgroup of $SL(n,\mathbb{C})$. Furthermore $\pi_d(\pi_d^{-1}(\overline{H}))=\overline{H}$  and its centralizer  $Z_{\pi_d(SL(n,\mathbb{C}))}(\overline{H})$ is equal to $Z_d(H)$. As a result,  rather than studying  centralizers of  irreducible subgroups of quotients of $SL(n,\mathbb{C})$, we can equivalently study   $d$-extended centralizers of irreducible subgroups of $SL(n,\mathbb{C})$ for $d$ dividing $n$.

\subsection{Abelianity, exponent and order}\label{sec2sub1}

\begin{prop}\label{2p1}

Let $n\geq 1$ and $H$ be an irreducible subgroup of $SL(n,\mathbb{C})$. Then, the $n$-centralizer $Z_n(H)$ of $H$ is abelian of exponent dividing $n$.

\end{prop}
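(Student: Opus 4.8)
The plan is to work with the $n$-extended centralizer $U := U_n(H)$ in $SL(n,\mathbb{C})$, since $Z_n(H) = \pi_n(U)$ and $\pi_n$ has central kernel, so it suffices to understand $U$ modulo the center. First I would invoke Lemma \ref{2l1} with $G = SL(n,\mathbb{C})$, $N = Z(SL(n,\mathbb{C})) = \langle \xi I_n\rangle \cong \mathbb{Z}/n$, and $\overline{H}$ the image $\pi_n(H)$: the lemma gives a group morphism $\phi : U \to \mathrm{Mor}(H, N)$, $u \mapsto (h \mapsto [u,h])$, whose kernel is $Z_{SL(n,\mathbb{C})}(H)$. Since $H$ is irreducible, Schur's lemma forces $Z_{SL(n,\mathbb{C})}(H) = N$. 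Thus $U/N$ embeds into $\mathrm{Mor}(H,N)$, which is an abelian group of exponent dividing $|N| = n$ (every homomorphism into $\mathbb{Z}/n$ is killed by $n$). Hence $Z_n(H) = U/N$ is abelian of exponent dividing $n$.

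Let me spell out the two pieces a little more. For \emph{abelianity}: given $\bar u_1, \bar u_2 \in Z_n(H)$ with lifts $u_1, u_2 \in U$, the commutator $[u_1,u_2]$ lies in $\ker\phi = N$ because $\phi$ is a morphism into an abelian group (so $\phi([u_1,u_2]) = [\phi(u_1),\phi(u_2)] = 0$); therefore $[\bar u_1, \bar u_2] = \pi_n([u_1,u_2]) = 1$ in $PSL(n,\mathbb{C})$. For the \emph{exponent}: given $\bar u \in Z_n(H)$ with lift $u \in U$, the element $\phi(u) \in \mathrm{Mor}(H,N)$ satisfies $n\cdot\phi(u) = 0$ since $N \cong \mathbb{Z}/n$, i.e. $\phi(u^n) = 0$, so $u^n \in \ker\phi = N$; composing with $\pi_n$ gives $\bar u^{\,n} = 1$.

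The one subtlety I should be careful about is the identification $Z_{SL(n,\mathbb{C})}(H) = N$: a priori Schur's lemma gives that the centralizer of $H$ in $GL(n,\mathbb{C})$ consists of scalar matrices, so the centralizer in $SL(n,\mathbb{C})$ is exactly the scalar matrices of determinant one, namely $\langle \xi I_n\rangle = N$; this uses only that $H$ acts irreducibly on $\mathbb{C}^n$, which holds since an irreducible subgroup of $SL(n,\mathbb{C})$ (in the sense of not being contained in any parabolic) acts irreducibly on $\mathbb{C}^n$ — indeed a proper invariant subspace would give an invariant flag hence a parabolic overgroup. The main obstacle, such as it is, is simply making sure the bookkeeping between $U$, $\ker\phi$, and the quotient is airtight; the order bound $n^2$ is deferred to the cited corollary and is not needed here, so I will not address it. This completes the proof.
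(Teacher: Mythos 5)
Your proposal is correct and follows essentially the same route as the paper's own proof: apply Lemma \ref{2l1} to get the morphism $U_n(H)\to\mathrm{Mor}(H,\langle\xi I_n\rangle)$, identify its kernel with $Z(SL(n,\mathbb{C}))$ via Schur's lemma, and conclude that $Z_n(H)$ embeds in an abelian group of exponent dividing $n$. The extra spelled-out verifications are fine but add nothing beyond what the paper already does.
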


\begin{proof}

According to lemma \ref{2l1}, define the group morphism :

\begin{displaymath}
\phi_{n}:
\left|
  \begin{array}{rcl}
    U_n(H)& \longrightarrow &Mor(H,\langle\xi I_n\rangle) \\
   u& \longmapsto & (h\mapsto [u,h]) \\
  \end{array}
\right. 
\end{displaymath}

The kernel of $\phi_n$ is the centralizer $Z_{SL(n,\mathbb{C})}(H)$ of $H$ in $SL(n,\mathbb{C})$. Since $H$ is irreducible, Schur's lemma implies that  $Z_{SL(n,\mathbb{C})}(H)=Z(SL(n,\mathbb{C}))=\langle\xi I_n\rangle$. Hence, the group $U_n(H)/Ker(\phi_n)=U_n(H)/\langle\xi I_n\rangle=\pi_n^{-1}(Z_n(H))/\langle\xi I_n\rangle$ is equal to $Z_n(H)$. On the other hand : $U_n(H)/Ker(\phi_n)$  is isomorphic to a subgroup of $Mor(H,\langle\xi I_n\rangle)$.

\bigskip

As a result, $Z_n(H)$ is isomorphic to a subgroup of $Mor(H,\langle\xi I_n\rangle)$. Since this group is abelian of exponent $n$, it follows that $Z_n(H)$ is also abelian of exponent dividing $n$.\end{proof}

The next proposition  justifies that the conjugacy class of any element in the $n$-extended centralizer of an irreducible subgroup in $SL(n,\mathbb{C})$ is well understood.

\begin{prop}\label{3.diagonal}

Let $n\geq 1$ and  $H$ be an irreducible subgroup of $SL(n,\mathbb{C})$. If $u\in U_n(H)$ and $\pi_n(u)$ is of order $d$ in $Z_n(H)$ then there exists  $\lambda\in \mathbb{C}^*$ such that :

$$u\text{ is conjugate to }\lambda \begin{pmatrix}I_{\frac{n}{d}}&&&\\&\xi^{\frac{n}{d}}I_{\frac{n}{d}}&&\\&&\ddots&\\&&&\xi^{\frac{n}{d}(d-1)}I_{\frac{n}{d}}\end{pmatrix}\text{,} $$

$$\text{ where } \lambda\in \left\lbrace
\begin{array}{ll}
\langle \xi I_n\rangle  &  \mbox{if $d$ is odd or $d$ even and $n/d$ even,}\\
\sqrt{\xi^{-\frac{n(d-1)}{d}}}\langle \xi I_n\rangle& \mbox{if $d$ is even and $n/d$ odd}\text{.} 
\end{array}
\right.$$

\end{prop}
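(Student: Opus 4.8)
The plan is to work inside $SL(n,\mathbb C)$ with the element $u \in U_n(H)$, using Lemma \ref{2.2l1} (the commutator-eigenvalue lemma) as the main engine. Since $\pi_n(u)$ has order $d$ in $Z_n(H)$, the morphism $\phi_n(u) : h \mapsto [u,h]$ from Lemma \ref{2l1} has image exactly the subgroup of $\langle \xi I_n\rangle$ of order $d$, namely $\langle \xi^{n/d} I_n\rangle$. Fix some $h_0 \in H$ with $[u,h_0] = \xi^{n/d} I_n$ (a generator of the image). By Lemma \ref{2.2l1}, conjugation by $h_0$ permutes the eigenspaces of $u$: it sends $E_\mu(u)$ to $E_{\xi^{n/d}\mu}(u)$. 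Hence the nonzero eigenvalues of $u$ come in $\langle \xi^{n/d}\rangle$-orbits, each of full size $d$ (full size because $\xi^{n/d}$ has order exactly $d$), and conjugation by $h_0$ cyclically permutes the corresponding eigenspaces, which forces all eigenspaces in one orbit to have the same dimension.

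**Pinning down the eigenvalue structure.** The next step is to show there is only one such orbit, i.e.\ the eigenvalues of $u$ are exactly $\{\lambda, \xi^{n/d}\lambda, \dots, \xi^{n(d-1)/d}\lambda\}$ for a single $\lambda \in \mathbb C^*$, each with multiplicity $n/d$. For this I would use irreducibility of $H$ more fully: the eigenspace decomposition of $u$ according to $\langle\xi^{n/d}\rangle$-orbits gives a direct sum decomposition of $\mathbb C^n$ that is preserved by the whole group $\langle H, u\rangle$ up to permutation of the orbit-blocks — more precisely, the sum $V_\lambda := \bigoplus_{j} E_{\xi^{nj/d}\lambda}(u)$ over one orbit is acted on by $U_n(H)$ and permuted by... actually the cleanest route: $u$ itself, being in $U_n(H)$, has the property that $\langle H, u\rangle$ is still irreducible (it contains the irreducible $H$), and the decomposition of $\mathbb C^n$ into the $V_\lambda$'s (sum over orbits) is stable under $H$ (since $h_0$ and all of $H$ normalize each $V_\lambda$: any $h\in H$ multiplies $\mathrm{Sp}(u)$ by a power of $\xi^{n/d}$, hence preserves each $\langle\xi^{n/d}\rangle$-orbit-space $V_\lambda$). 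Irreducibility of $H$ then forces a single orbit, so $\dim V_\lambda = n$, giving $d$ eigenspaces each of dimension $n/d$. After conjugating, $u$ has the block-diagonal form $\lambda\,\mathrm{diag}(I_{n/d}, \xi^{n/d}I_{n/d}, \dots, \xi^{n(d-1)/d}I_{n/d})$.

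**Determining $\lambda$.** The remaining point is the constraint on $\lambda$ coming from $\det u = 1$ (as $u \in SL(n,\mathbb C)$). Computing the determinant of the displayed matrix: $\det u = \lambda^n \cdot \prod_{k=0}^{d-1}\big(\xi^{(n/d)k}\big)^{n/d} = \lambda^n\, \xi^{(n/d)^2 \cdot d(d-1)/2} = \lambda^n\, \xi^{\,n^2(d-1)/(2d)}$. Setting this equal to $1$ gives $\lambda^n = \xi^{-n^2(d-1)/(2d)}$. When $d$ is odd, or $d$ even with $n/d$ even, the exponent $n^2(d-1)/(2d) = (n/d)\cdot n(d-1)/2$ is an integer multiple of $n$ (since $n(d-1)/2 \in \mathbb Z$ when $d$ odd, and $n/d$ is even handles the other case), so $\xi^{-n^2(d-1)/(2d)} = 1$ and $\lambda^n = 1$, i.e.\ $\lambda \in \langle\xi I_n\rangle$ up to the scalar identification — more precisely $\lambda$ ranges over $n$-th roots of unity, and $\lambda I_n \in \langle \xi I_n\rangle$. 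In the remaining case ($d$ even, $n/d$ odd), $\lambda^n = \xi^{-n(d-1)/d \cdot (n/d) }$ and one extracts $\lambda \in \sqrt{\xi^{-n(d-1)/d}}\,\langle\xi\rangle$, matching the statement; one should check this square root makes sense and that the coset is well-defined, which is the bookkeeping I would do carefully.

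**Expected main obstacle.** The routine part is the determinant computation and the case analysis on parities of $d$ and $n/d$; the genuinely delicate step is the argument in the second paragraph that there is a \emph{single} $\langle\xi^{n/d}\rangle$-orbit of eigenvalues — i.e.\ correctly invoking irreducibility of $H$ (not just of $\langle H,u\rangle$) to rule out a proper $H$-invariant block $V_\lambda$, and verifying that each $V_\lambda$ is indeed $H$-stable. I would be careful to justify why every $h \in H$, not merely the chosen $h_0$, maps each orbit-space to itself: this follows because $\phi_n(u)(h) = [u,h] \in \langle \xi^{n/d} I_n\rangle$ for all $h$, so by Lemma \ref{2.2l1} every $h$ scales $\mathrm{Sp}(u)$ by an element of $\langle \xi^{n/d}\rangle$ and hence stabilizes each coset-space $V_\lambda$; then $H$ irreducible $\Rightarrow$ $V_\lambda = \mathbb C^n$ for the unique nonzero $\lambda$-orbit present.
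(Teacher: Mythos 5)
Your proposal is correct and follows essentially the same route as the paper's proof: Lemma \ref{2.2l1} turns the commutator morphism $h\mapsto[u,h]$ into an action of $H$ on $\mathrm{Sp}(u)$ by multiplication by $\langle\xi^{n/d}\rangle$, irreducibility of $H$ forces a single orbit of size $d$ with equidimensional eigenspaces (the paper first notes that $U_n(H)$ is finite, so $u$ has finite order and is diagonalizable --- a point you should add), and $\det u=1$ pins down $\lambda$. The only slip is in the last case, where the exponent should be $\lambda^n=\bigl(\xi^{-n(d-1)/d}\bigr)^{n/2}$ rather than $\bigl(\xi^{-n(d-1)/d}\bigr)^{n/d}$; since $d$ even forces $n/2\in\mathbb{Z}$, this yields exactly the coset $\sqrt{\xi^{-n(d-1)/d}}\,\langle\xi\rangle$ of the statement.
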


\begin{proof}

First, $\pi_n(H)$ is irreducible (since $H$ is) so $Z_n(H)$ is finite and $U_n(H)$ is also finite. It follows that $u$ is necessarily of finite order and, in particular, it is diagonalizable.

\bigskip

For all $h\in H$, there exists $s_h\in\mathbb{Z}/n$ such that  $[h,u]=\xi^{s_h}I_n $. Applying lemma \ref{2l1}, the application $s:\left|\begin{array}{rcl}
H&\longrightarrow & \mathbb{Z}/n\\h&\longmapsto& s_h\end{array}\right.$ is a group morphism. Let $\xi^t$ be a generator of $s(H)$. 

\bigskip

Lemma \ref{2.2l1} implies that $H$ acts on the spectrum  $Sp(u)$ of $u$ (i.e. the set of its eigenvalues) :  $\text{ if } h\in H\text{ and } \mu \in Sp(u)$ then $h\cdot \mu:=\xi^{s_h}\mu $. Let $X$ be an orbit in  $Sp(u)$ for this action $H$.  The  subspace $\underset{\mu\in X}{\bigoplus}E_{\mu}(u)$ of  $\mathbb{C}^{\mathbb{Z}/n}$ is stable by  $H$.

Since  $H$ is irreducible, this non-trivial subspace is the whole space $\mathbb{C}^{\mathbb{Z}/n}$. In particular,  the action of  $H$ on   $Sp(u)$ is transitive  and all the eigenspaces have the same dimension $v>0$. Remarking that $H$ acts through the group morphism   $s$ whose image is generated by  $\xi^t$, we can say (if $\lambda$ is some eigenvalue of $u$)  that  $Sp(u)=\lambda\langle \xi^t\rangle$  and since  $u$ is diagonalizable :

$$u\text{ is conjugate to   }\lambda \begin{pmatrix}I_{v}&&&\\&\xi^{t}I_v&&\\&&\ddots&\\&&&\xi^{t(\frac{n}{t}-1)}I_v\end{pmatrix}\text{.}  $$

Using the dimensions, $n=|Sp(u)|\times v=\frac{n}{t}v$, hence $t=v$. Since   $\pi_n(u)$ is of order $d$, this implies that $t=\frac{n}{d}k$ with $0<k<d$ prime to  $d$. Hence :

$$u\text{ is conjugate to }\lambda \begin{pmatrix}I_{\frac{n}{d}}&&&\\&\xi^{\frac{n}{d}}I_{\frac{n}{d}}&&\\&&\ddots&\\&&&\xi^{\frac{n}{d}(d-1)}I_{\frac{n}{d}}\end{pmatrix} \text{.}  $$

The condition on  $\lambda$ is given by writing $\det(u)=1$. Which leads to $\lambda^n\xi^{\frac{n}{d}\frac{n(d-1)}{2}}=1$.

\bigskip

If $d$ is odd, then   $2$ divides $d-1$ and then  $\lambda^n=1 $.

If $d$ is even and  $2$ divides $\frac{n}{d}$, then   $\lambda^n=\xi^{-\frac{n}{d}\frac{n(d-1)}{2}}=(\xi^n)^{\frac{n}{2d}(d-1)}=1$.

If $d$ is even and $2$ does not divide $\frac{n}{d}$, then  $\lambda^n=\xi^{-\frac{n}{d}\frac{n(d-1)}{2}}$ whence :

 $$\lambda\equiv\xi^{-\frac{n(d-1)}{2d}}\equiv\sqrt{\xi^{-\frac{n(d-1)}{d}}}\text{ mod } \langle \xi I_n\rangle\text{.} $$

\end{proof}

Let $n\geq 1$ and $H$ be an irreducible subgroup of $SL(n,\mathbb{C})$, we define the  \textit{standard representation} of $U_n(H)$ as the natural inclusion $\iota_H$ of $U_n(H)$ in $SL(n,\mathbb{C})$. Its character $\Tr\circ \iota_H$ will be denoted $\chi_H$. Computing this character appears to be easy.

\begin{prop}\label{3.car}
Let $n\geq 1$, $H$ be an irreducible subgroup in $SL(n,\mathbb{C})$ and $u$ in $U_n(H)$,

$$\text{then : } \chi_H(u)=\left\lbrace
\begin{array}{ll}
0  &  \mbox{if $u\notin \langle\xi I_n\rangle$}\\
n\xi^k&\mbox{if $u=\xi^kI_n$ where $k\in\mathbb{Z}/n$}\text{.} 
\end{array}
\right.$$

\end{prop}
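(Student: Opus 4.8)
The plan is to use Proposition \ref{3.diagonal}, which gives us an explicit diagonal form for any $u \in U_n(H)$ up to conjugation, and to simply read off the trace from that diagonal form since the trace is a conjugacy invariant. First I would dispose of the easy case: if $u = \xi^k I_n$ is central, then $\iota_H(u) = \xi^k I_n$ and $\chi_H(u) = \Tr(\xi^k I_n) = n\xi^k$ immediately. So the content is in the case $u \notin \langle \xi I_n \rangle$, equivalently $\pi_n(u)$ has order $d > 1$ in $Z_n(H)$.

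For $u \notin \langle \xi I_n \rangle$, let $d \geq 2$ be the order of $\pi_n(u)$. By Proposition \ref{3.diagonal}, $u$ is conjugate to $\lambda \cdot \mathrm{diag}(I_{n/d}, \xi^{n/d} I_{n/d}, \ldots, \xi^{\frac{n}{d}(d-1)} I_{n/d})$ for a suitable $\lambda \in \mathbb{C}^*$. Hence
$$\chi_H(u) = \lambda \cdot \frac{n}{d} \sum_{j=0}^{d-1} \xi^{\frac{n}{d} j} = \lambda \cdot \frac{n}{d} \sum_{j=0}^{d-1} \zeta^j,$$
where $\zeta := \xi^{n/d}$ is a primitive $d$-th root of unity (since $\xi$ is a primitive $n$-th root of unity and $d \mid n$). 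Because $d \geq 2$, $\zeta \neq 1$, so the geometric sum $\sum_{j=0}^{d-1} \zeta^j = \frac{\zeta^d - 1}{\zeta - 1} = 0$. Therefore $\chi_H(u) = 0$, regardless of the value of $\lambda$.

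I do not anticipate a real obstacle here: the proposition is an essentially immediate corollary of Proposition \ref{3.diagonal}, the only thing to check being that $\xi^{n/d}$ is a primitive $d$-th root of unity so that the sum of the $d$-th roots of unity vanishes. The one point worth stating carefully is that the value of $\lambda$ provided by Proposition \ref{3.diagonal} — whether it lies in $\langle \xi I_n\rangle$ or in the coset $\sqrt{\xi^{-n(d-1)/d}}\langle \xi I_n\rangle$ — is irrelevant, precisely because it multiplies a sum that is already zero.
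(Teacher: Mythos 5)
Your proposal is correct and follows exactly the paper's own argument: the central case is read off directly, and for non-central $u$ one applies Proposition \ref{3.diagonal} and observes that $\xi^{n/d}$ is a primitive $d$-th root of unity for $d>1$, so the geometric sum vanishes and the value of $\lambda$ is irrelevant. Nothing is missing.
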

 
\begin{proof}

Let  $u\in U_n(H)$, if $u=\xi^kI_n\in \langle \xi I_n\rangle$ then, $\chi_H(u)=\Tr(\iota_H(u))=\Tr(\xi^kI_n)=n\xi^k$.

If  $u$ is not central,  then  $u$ is not trivial in   $U_n(H)/\langle \xi I_n\rangle=Z_n(H)$. Let $d>1$ be its order in $Z_n(H)$. By  proposition \ref{3.diagonal} which explicitely gives the conjugacy class of $u$, there exists  $\lambda\in \mathbb{C}^*$ such that   $\chi_H(u)=\lambda\frac{n}{d}(1+\xi^{\frac{n}{d}}+\cdots+\xi^{\frac{n}{d}(d-1)})$ and since $\xi^{\frac{n}{d}}$ is a primitive   $d$-th droot of the unity for $d>1$, the sum $1+\xi^{\frac{n}{d}}+\cdots+\xi^{\frac{n}{d}(d-1)}=0$ , therefore $\chi_H(u)=0$. \end{proof}

Since $U_n(H)$ is a finite group,  we may use the theory of finite group representations in order to have some additional properties on $Z_n(H)$. For instance :

\begin{cor}\label{3.ncardinal}

Let $n\geq 1$ and $H$ be an irreducible subgroup of $SL(n,\mathbb{C})$ then $|Z_n(H)|$ divides $n^2$. Furthermore, $Z_n(H)\text{ is irreducible if and only if } |Z_n(H)|=n^2$.

\end{cor}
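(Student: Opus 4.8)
The plan is to use the standard representation $\iota_H$ of the finite group $U:=U_n(H)$ together with the character formula from proposition \ref{3.car}. The key idea is that $\chi_H$ is, up to a factor, the character of an induced/multiple of a linear character on the centre, so its norm can be computed explicitly. Concretely, $U$ is a finite group, $\langle \xi I_n\rangle \cong \mathbb{Z}/n$ sits inside $U$ as a central subgroup on which $\iota_H$ restricts to $n$ copies of the faithful character $\xi^k I_n \mapsto \xi^k$, and $U/\langle\xi I_n\rangle = Z_n(H)$ has order, say, $N$, so that $|U| = nN$.

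First I would compute $\langle \chi_H, \chi_H\rangle = \frac{1}{|U|}\sum_{u\in U} |\chi_H(u)|^2$. By proposition \ref{3.car}, the only contributions come from $u = \xi^k I_n \in \langle \xi I_n\rangle$, where $|\chi_H(u)|^2 = |n\xi^k|^2 = n^2$. There are exactly $n$ such elements, so $\langle \chi_H, \chi_H \rangle = \frac{1}{nN}\cdot n \cdot n^2 = \frac{n^2}{N}$. Since $\chi_H$ is the character of an honest (complex, finite-dimensional) representation of the finite group $U$, this inner product is a nonnegative integer; hence $N = |Z_n(H)|$ divides $n^2$, which is the first assertion. (One should double-check $\chi_H$ really is a character of $U$ and not merely of a quotient — it is, being $\Tr\circ\iota_H$ with $\iota_H$ the inclusion $U\hookrightarrow SL(n,\mathbb{C})$ — so Schur orthogonality applies directly.)

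For the second assertion I would argue that $\langle\chi_H,\chi_H\rangle = 1$ exactly when the standard representation $\iota_H$ is irreducible as a representation of $U$ (over $\mathbb{C}$), and that this is equivalent to $Z_n(H)$ being irreducible in $SL(n,\mathbb{C})$. The computation above shows $\langle\chi_H,\chi_H\rangle = 1 \iff N = n^2$. So it remains to relate irreducibility of the $U$-action to irreducibility of $Z_n(H)$: the image $\iota_H(U)$ in $PGL(n,\mathbb{C})$ is precisely $Z_n(H)$ (since $\ker(U\to PGL(n,\mathbb{C})) = U\cap \mathbb{C}^* I_n$, and one checks this equals $\langle\xi I_n\rangle$, using that an element of $U$ scalar in $GL(n,\mathbb{C})$ and lying in $SL(n,\mathbb{C})$ is in $\langle\xi I_n\rangle$), and a subgroup of $SL(n,\mathbb{C})$ acts irreducibly on $\mathbb{C}^n$ iff its image in $PGL(n,\mathbb{C})$ does. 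Thus $\iota_H$ is an irreducible representation of $U$ iff $Z_n(H)$ is an irreducible subgroup of $SL(n,\mathbb{C})$.

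The main obstacle I anticipate is the bookkeeping around scalars: making sure that $U \cap \mathbb{C}^*I_n = \langle \xi I_n\rangle$ (so that $|U| = n\cdot|Z_n(H)|$ and the pushforward to $PGL(n,\mathbb{C})$ has the right kernel), and being careful that $\chi_H$ is genuinely a character of the finite group $U$ so that $\langle\chi_H,\chi_H\rangle\in\mathbb{Z}_{\ge 0}$ with equality to $1$ characterizing irreducibility. Both points follow from irreducibility of $H$ via Schur's lemma (as in the proof of proposition \ref{2p1}, $Z_{SL(n,\mathbb{C})}(H) = \langle\xi I_n\rangle$) and from finiteness of $U$ (established in the proof of proposition \ref{3.diagonal}); once these are in place the corollary is immediate from the orthogonality computation.
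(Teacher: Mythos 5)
Your proposal is correct and follows essentially the same route as the paper: apply Proposition \ref{3.car} to compute $\|\chi_H\|^2=\frac{n^3}{|U_n(H)|}=\frac{n^2}{|Z_n(H)|}$, use integrality of the character norm to get the divisibility, and use $\|\chi_H\|^2=1$ to characterize irreducibility of $U_n(H)$, hence of $Z_n(H)$. The extra bookkeeping you flag (that $U_n(H)\cap\mathbb{C}^*I_n=\langle\xi I_n\rangle$, so $|U_n(H)|=n|Z_n(H)|$) is exactly what the paper uses implicitly, since $U_n(H)$ is by definition the $\pi_n$-preimage of $Z_n(H)$.
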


\begin{proof}

First, we sum up classical results of the theory of finite groups representations, they can be found in \cite{Ser}. Say we are given a representation $\rho:G\rightarrow GL(n,\mathbb{C})$ of a finite group $G$, then its character $\chi:=\Tr\circ\rho$ has a norm defined by :

$$\|\chi\|^2:=\frac{1}{|G|}\sum_{g\in G}\chi(g)\chi(g^{-1})\text{.} $$

We know that $\|\chi\|^2$ is a natural number and  $\|\chi\|^2=1$ if and only if the representation  $\rho$ is irreducible. Applying this to the standard representation  $\iota_H$ of $U_n(H)$ :

\begin{align*}
\|\chi_H\|^2&=\frac{1}{|U_n(H)|}\sum_{u\in U_n(H)}\chi_H(u)\chi_H(u^{-1}) \underbrace{=}_{\text{ by proposition \ref{3.car}}}\frac{n^3}{|U_n(H)|}\text{.}
\end{align*}

Since $\|\chi_H\|^2$ must be an integer, $|U_n(H)|$ divides $n^3$ and since $|U_n(H)|$ is equal to $n|Z_n(H)|$,  $|Z_n(H)|$ divides $n^2$.  Furthermore, $Z_n(H)$ is irreducible if and only if $\pi_n^{-1}(Z_n(H))=U_n(H)$ is irreducible if and only if $\|\chi_H\|^2=1$   if and only if $|Z_n(H)|=~n^2$.\end{proof}

By the following example, the bound is always reached :

\begin{ex}\label{cardncarre}
Let $n\geq 1$, define :

$$u:=\lambda\begin{pmatrix}1&&&\\&\xi&&\\&&\ddots&\\&&&\xi^{n-1}\end{pmatrix} \text{ and } M:=\lambda\begin{pmatrix}0&&&1\\1&\ddots&&\\&\ddots&\ddots&\\&&1&0\end{pmatrix}$$

where $\lambda=1$ if $n$ is odd and $\sqrt{\xi}$ if $n$ is even. Then $H:=\langle u,M\rangle$ is an irreducible subgroup of $SL(n,\mathbb{C})$ whose $n$-centralizer is $\pi_n(H)$ which is of order $n^2$.

\end{ex}

\begin{proof} See the proof of proposition \ref{ncompatible}. \end{proof}

The second corollary deals with conjugacy classes of centralizers of irreducible subgroups in $PSL(n,\mathbb{C})$. 

\begin{cor}\label{conjugclass}

Let $H_1$ and $H_2$ be two irreducible subgroups in $SL(n,\mathbb{C})$. Then $Z_n(H_1)$ is conjugate to $Z_n(H_2)$ if and only if there exists an abstract group isomorphism $f$ from $U_n(H_1)$ to  $U_n(H_2)$ such that for all $\xi^kI_n\in Z(SL(n,\mathbb{C}))$, $f(\xi^k I_n)=\xi^k I_n$.

\end{cor}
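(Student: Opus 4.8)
The plan is to move back and forth between the quotient $PSL(n,\mathbb{C})$ and its cover $SL(n,\mathbb{C})$, where the extended centralizers $U_n(H_i)=\pi_n^{-1}(Z_n(H_i))$ live as honest finite matrix groups containing $\langle \xi I_n\rangle$, and then to exploit the rigidity of the standard representations $\iota_{H_i}$ furnished by Proposition~\ref{3.car}. Recall that $\pi_n(U_n(H_i))=Z_n(H_i)$ and that $U_n(H_i)$ is finite because $H_i$ is irreducible.

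For the direct implication, suppose $\bar g\,Z_n(H_1)\,\bar g^{-1}=Z_n(H_2)$ for some $\bar g\in PSL(n,\mathbb{C})$ and pick a lift $g\in SL(n,\mathbb{C})$ of $\bar g$. Since $\pi_n$ is surjective with central kernel, conjugation by $g$ carries $\pi_n^{-1}(Z_n(H_1))=U_n(H_1)$ onto $\pi_n^{-1}(Z_n(H_2))=U_n(H_2)$, so $f\colon u\mapsto gug^{-1}$ is an abstract isomorphism $U_n(H_1)\to U_n(H_2)$; and since $g$ commutes with every scalar matrix, $f(\xi^kI_n)=\xi^kI_n$ for all $k$, which is the asserted property.

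For the converse, let $f\colon U_n(H_1)\to U_n(H_2)$ be an abstract isomorphism which is the identity on $\langle\xi I_n\rangle$. I would compare the two $n$-dimensional complex representations of the finite group $U_n(H_1)$ given by $\iota_{H_1}$ and by $\iota_{H_2}\circ f$, and show they have the same character. Indeed, by Proposition~\ref{3.car} the character $\chi_{H_2}$ vanishes off $\langle\xi I_n\rangle$ and sends $\xi^kI_n$ to $n\xi^k$; since $f$ maps $\langle\xi I_n\rangle$ identically onto itself it sends non-central elements to non-central elements, whence $\chi_{H_2}(f(u))=0=\chi_{H_1}(u)$ for $u\notin\langle\xi I_n\rangle$ and $\chi_{H_2}(f(\xi^kI_n))=n\xi^k=\chi_{H_1}(\xi^kI_n)$ otherwise. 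By the classical fact that complex representations of a finite group with equal characters are isomorphic (cf.\ \cite{Ser}), there is $g\in GL(n,\mathbb{C})$ with $g\,u\,g^{-1}=f(u)$ for every $u\in U_n(H_1)$.

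It remains to normalize $g$: replacing $g$ by $\mu g$, where $\mu$ is an $n$-th root of $\det(g)^{-1}$ (which exists since $\mathbb{C}$ is algebraically closed), leaves the conjugation action unchanged and puts $g\in SL(n,\mathbb{C})$. Then $g\,U_n(H_1)\,g^{-1}=f(U_n(H_1))=U_n(H_2)$, and applying $\pi_n$ gives $\pi_n(g)\,Z_n(H_1)\,\pi_n(g)^{-1}=Z_n(H_2)$, so the two centralizers are conjugate in $PSL(n,\mathbb{C})$. I expect the only genuinely delicate point to be the character identity, where the hypothesis that $f$ fixes the center \emph{pointwise} — not merely setwise — is exactly what makes $\chi_{H_2}\circ f=\chi_{H_1}$; the remaining steps are bookkeeping with $\pi_n$ and a scalar normalization.
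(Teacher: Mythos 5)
Your proposal is correct and follows essentially the same route as the paper: the forward direction by lifting the conjugating element to $SL(n,\mathbb{C})$, and the converse by comparing the characters of $\iota_{H_1}$ and $\iota_{H_2}\circ f$ via Proposition~\ref{3.car} and invoking the fact that finite-group representations with equal characters are conjugate. The only (harmless) addition is your explicit scalar normalization to land the intertwiner in $SL(n,\mathbb{C})$, a detail the paper leaves implicit.
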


\begin{proof}

We begin with the assumption that  $Z_n(H_1)$ is conjugate to $Z_n(H_2)$ in $PSL(n,\mathbb{C})$. Then there exists $g\in SL(n,\mathbb{C})$ such that $\pi(g)Z_n(H_1)\pi(g)^{-1}=Z_n(H_2)$ and therefore, the element $g$ conjugates $U_n(H_1)$ to $U_n(H_2)$. As a result, the conjugation morphism by $g$ restricted to $U_n(H_1)$ will do the job.

Conversly, say such $f:U_n(H_1)\rightarrow U_n(H_2)$ exists. Recall that for $i=1,2$, $\iota_{H_i}$ defines the injection of $U_n(H_i)$ in $SL(n,\mathbb{C})$. We define the representation $\rho:=\iota_{H_2}\circ f$ from $U_n(H_1)$ into $SL(n,\mathbb{C})$ and $\chi:=tr\circ \rho$ its character. Remark that :

$$\text{ for } u\in U_n(H_1)\text{, } f(u)\left\lbrace \begin{array}{l}\notin Z(SL(n,\mathbb{C}))\text{ if }u\notin Z(SL(n,\mathbb{C}))\\
=\xi^kI_n\text{ if } u=\xi^k I_n\text{.} \end{array} \right. $$

Applying proposition \ref{3.car} to $\chi_{H_1}$ and $\chi_{H_2}$, we have that for $u=\xi^k I_n\in Z(SL(n,\mathbb{C}))$, $\chi_{H_1}(u)=n\xi^k=\chi(u)$ and for $u\notin Z(SL(n,\mathbb{C}))$, $\chi_{H_1}(u)=0=\chi(u)$. In particular, the representations $\iota_{H_1}$ and $\rho$ of $U_n(H_1)$ share the same character. As a result (see \cite{Ser}), they are conjugate. Since the respective images of $\iota_{H_1}$ and $\rho$  are $U_n(H_1)$ and  $U_n(H_2)$, the groups $U_n(H_1)$ and $U_n(H_2)$ are conjugate.  \end{proof}

Last consequence of proposition \ref{3.car} : any subgroup of $Z_n(H)$ remains  the  centralizer of an irreducible subgroup of $PSL(n,\mathbb{C})$.

\begin{cor}\label{3.stabparsgrp}

Let $n\geq 1$ and $H$  be an irreducible subgroup of  $SL(n,\mathbb{C})$.  For any subgroup  $A$  of $Z_n(H)$, $Z_{PSL(n,\mathbb{C})}(Z_{PSL(n,\mathbb{C})}(A))=A$. In particular, any subgroup of $Z_n(H)$ is itself the centralizer of an irreducible subgroup of $PSL(n,\mathbb{C})$.

\end{cor}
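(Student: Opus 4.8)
The plan is to work in the extended setting of $SL(n,\mathbb{C})$ and use the character computation of Proposition \ref{3.car} to identify centralizers concretely. First I would observe that, writing $\pi=\pi_n$ for brevity, a subgroup $A\leq Z_n(H)$ pulls back to a subgroup $\widetilde{A}:=\pi^{-1}(A)$ with $\langle\xi I_n\rangle\subseteq\widetilde{A}\subseteq U_n(H)$, and that it suffices to show $\pi^{-1}\big(Z_{PSL(n,\mathbb{C})}(Z_{PSL(n,\mathbb{C})}(A))\big)=\widetilde{A}$ as subgroups of $SL(n,\mathbb{C})$. Since $\widetilde{A}$ is finite (being contained in the finite group $U_n(H)$) and contains the full center, its elements are simultaneously diagonalizable up to conjugacy; more importantly, I claim $\widetilde{A}$ is itself irreducible in $SL(n,\mathbb{C})$ exactly when we can run the machinery, but in fact we do not need irreducibility of $\widetilde A$ — what we need is that $\widetilde A$ is \emph{generated by its commutators with $H$ together with the center}, which is precisely the content of $A\leq Z_n(H)=U_n(H)/\langle\xi I_n\rangle$.

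The key step is the following double-centralizer identity inside the finite group generated by $U_n(H)$ and $H$: for any subgroup $\widetilde A$ with $\langle \xi I_n\rangle\subseteq\widetilde A\subseteq U_n(H)$, one has $Z_{SL(n,\mathbb{C})}(Z_{SL(n,\mathbb{C})}(\widetilde A))=\widetilde A$. To prove this, I would use Lemma \ref{2l1} and Lemma \ref{2.2l1}: the pairing $(g,h)\mapsto$ (the root of unity $\lambda$ with $[g,h]=\lambda I_n$) between $U_n(H)$ and $H$ descends to a nondegenerate pairing on $Z_n(H)\times s(H)$, because the kernel on the $U_n(H)$ side is $Z_{SL(n,\mathbb{C})}(H)=\langle\xi I_n\rangle$ by Schur. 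Then $Z_{SL(n,\mathbb{C})}(\widetilde A)\cap H$ is the annihilator of $A$ under this pairing, and applying the same reasoning with the roles of the two groups exchanged shows that the double annihilator is $A$ again — this is just the standard fact that a nondegenerate pairing of finite abelian groups induces an inclusion-reversing bijection between subgroups via annihilators, and the double annihilator is the identity. The passage from "$Z(Z(\widetilde A))\cap H$" back to the full $Z(Z(\widetilde A))$ uses Proposition \ref{3.diagonal}, which pins down the conjugacy class of any element of an extended centralizer, hence forces any element centralizing everything that $\widetilde A$ centralizes to already lie in $\widetilde A$.

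The main obstacle I anticipate is the last passage: showing that $Z_{PSL(n,\mathbb{C})}(A)$ is large enough — specifically, that $\pi(H)\cap Z_{PSL(n,\mathbb{C})}(A)$ surjects onto the full annihilator of $A$ — and then that nothing outside $\widetilde A$ can centralize this annihilator. The first part is the nondegeneracy of the pairing restricted to $A$, which follows from $A\leq Z_n(H)$; the second part is where Proposition \ref{3.diagonal} and the character formula of Proposition \ref{3.car} do the real work, since they let us compute $\|\chi\|^2$-type quantities and deduce that the double centralizer has exactly the order $|A|\cdot n$ upstairs, forcing equality $\pi^{-1}(Z(Z(A)))=\widetilde A$. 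The very last sentence of the corollary is then immediate: $A=Z_{PSL(n,\mathbb{C})}(Z_{PSL(n,\mathbb{C})}(A))$ exhibits $A$ as the centralizer of the subgroup $Z_{PSL(n,\mathbb{C})}(A)$, which is irreducible because its centralizer $A$ is finite (being a subgroup of the finite group $Z_n(H)$) and it is completely reducible as a finite group, so Sikora's criterion (corollary 17 in \cite{Sik}, recalled above) applies.
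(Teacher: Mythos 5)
Your plan rests on the claimed identity $Z_{SL(n,\mathbb{C})}(Z_{SL(n,\mathbb{C})}(\widetilde A))=\widetilde A$, and this identity is false. The centralizer in $SL(n,\mathbb{C})$ of a finite non-irreducible subgroup is positive-dimensional: if $\widetilde A$ is generated by $\langle\xi I_n\rangle$ and a single non-central $u\in U_n(H)$ (diagonal with $d$ distinct blocks as in Proposition \ref{3.diagonal}), then $Z_{SL(n,\mathbb{C})}(\widetilde A)$ is the block-diagonal group and its own centralizer is the torus of block-scalar matrices of determinant $1$, which strictly contains the finite group $\widetilde A$. What the corollary asserts is a double-centralizer identity in $PSL(n,\mathbb{C})$, i.e.\ for the extended centralizers $U_n(\cdot)$, and the whole difficulty is that these are defined by commutation \emph{up to center}. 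Your pairing argument runs into the same confusion: it correctly shows that the double annihilator of $A$ under the exact-commutation pairing $U_n(H)\times H\to\langle\xi I_n\rangle$ is $A$, but an element $g$ with $\pi_n(g)\in Z_{PSL(n,\mathbb{C})}(Z_{PSL(n,\mathbb{C})}(A))$ only satisfies $[g,h]\in\langle\xi I_n\rangle$ for $h$ in the annihilator $H\cap Z_{SL(n,\mathbb{C})}(\widetilde A)$, not $[g,h]=1$, so membership in the double annihilator cannot be concluded. The assertion that the double centralizer ``has exactly the order $|A|\cdot n$ upstairs'' is precisely what must be proved, and neither Proposition \ref{3.diagonal} (which only pins down conjugacy classes of single elements) nor the formula $\|\chi\|^2=n^3/|\cdot|$ computes the order of $U_n(U_n(\widetilde A))$ by itself.

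The paper closes this gap by a different mechanism: setting $B:=Z_{PSL(n,\mathbb{C})}(Z_{PSL(n,\mathbb{C})}(A))$, one first gets $A\le B\le Z_n(H)$ and $Z_{PSL(n,\mathbb{C})}(B)=Z_{PSL(n,\mathbb{C})}(A)$, hence $U_n(A_0)=U_n(B_0)$ for the pullbacks $A_0,B_0$, which forces the index $[Z_{SL(n,\mathbb{C})}(A_0):Z_{SL(n,\mathbb{C})}(B_0)]$ to be finite. If $A_0\subsetneq B_0$, Proposition \ref{3.car} gives $\|\chi_{A_0}\|^2=n^3/|A_0|>n^3/|B_0|=\|\chi_{B_0}\|^2$, so some isotypic component for $B_0$ splits further under $A_0$ and $Z_{SL(n,\mathbb{C})}(B_0)$ has infinite index in $Z_{SL(n,\mathbb{C})}(A_0)$ --- a contradiction. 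Some argument of this kind is needed in place of your key step. Finally, your justification of the last sentence via Sikora's criterion is misapplied: $Z_{PSL(n,\mathbb{C})}(A)$ is not finite in general (for $A$ trivial it is all of $PSL(n,\mathbb{C})$), so it is not ``completely reducible as a finite group''; the correct and much simpler reason it is irreducible is that it contains the irreducible subgroup $\pi_n(H)$, hence no parabolic subgroup contains it.
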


\begin{proof}

We denote $B:=Z_{PSL(n,\mathbb{C})}(Z_{PSL(n,\mathbb{C})}(A))$. The inclusion $A\leq B$ is obvious. Since  $\pi_n(H)\leq Z_{PSL(n,\mathbb{C})}(A)$, we deduce  that $ B\leq Z_n(H)$. Hence  $A\leq B\leq Z_n(H)$. 

\bigskip

 Let us first show that $Z_{PSL(n,\mathbb{C})}(B)=Z_{PSL(n,\mathbb{C})}(A)$. Since  $A\leq B$, it is clear that $Z_{PSL(n,\mathbb{C})}(B)\leq Z_{PSL(n,\mathbb{C})}(A)$. Furthermore, if $z$ commutes with $A$ then any element  $b\in B:=Z_{PSL(n,\mathbb{C})}(Z_{PSL(n,\mathbb{C})}(A))$ will commute with $z$ by definition. 

\begin{equation}
\text{Hence $z\in Z_{PSL(n,\mathbb{C})}(B)$, so that  : }Z_{PSL(n,\mathbb{C})}(B)=Z_{PSL(n,\mathbb{C})}(A)\text{.}
\label{3.commutateurs}
\end{equation}

Let $A_0$ (resp. $B_0$) be $\pi_n^{-1}(A)$ (resp. $\pi_n^{-1}(B)$) then  $A_0\leq B_0$, we denote $Z_1(A_0)$ (resp. $Z_1(B_0)$) the centralizer of  $A_0$ (resp. $B_0$) in  $SL(n,\mathbb{C})$. It follows that $Z_1(B_0)\leq Z_1(A_0)$. Let us show that the index of $Z_1(B_0)$ in $Z_1(A_0)$ is finite. Applying lemma  \ref{2l1}, both indices $[U_n(A_0):Z_1(A_0)]$ and $[U_n(B_0):Z_1(B_0)]$ are  finite.  Since :

\begin{align*}
U_n(A_0)&=\pi_n^{-1}(Z_{PSL(n,\mathbb{C})}(A))\text{ by definition}\\
&=\pi_n^{-1}(Z_{PSL(n,\mathbb{C})}(B))\text{ by equation \ref{3.commutateurs}}\\
&=U_n(B_0)\text{ by definition,}
\end{align*}

we get the following equality :

\begin{align*}
[U_n(B_0):Z_1(B_0)]&=[U_n(A_0):Z_1(B_0)]\\
&=[U_n(A_0):Z_1(A_0)][Z_1(A_0):Z_1(B_0)]\\
&\geq  [Z_1(A_0):Z_1(B_0)]
\end{align*}

\begin{equation}
\text{ and  end up with }[Z_1(A_0):Z_1(B_0)]<\infty\text{.}
\label{3.fini}
\end{equation}

We make a proof by contradiction, say $A\neq B$, then $A_0\neq B_0$. Let us show that this contradicts the inequality \ref{3.fini}. We denote  $\rho_{A_0}$ (resp. $\rho_{B_0}$)  the inclusion of $A_0$ (resp. $B_0$) in $SL(n,\mathbb{C})$ whose character is $\chi_{A_0}$ (resp. $\chi_{B_0}$).  Since $A_0< B_0\leq \pi_n^{-1}(Z_n(H))=U_n(H)$, those representations are restrictions of the standard representation $\iota_H$ of $U_n(H)$ whose character has been computed in proposition~\ref{3.car}.

$$\text{ This leads to }\|\chi_{A_0}\|^2=\frac{n^3}{|A_0|}\text{ and }\|\chi_{B_0}\|^2=\frac{n^3}{|B_0|}\text{.}$$

Since  $|A_0|<|B_0|$, we have $\|\chi_{A_0}\|>\|\chi_{B_0}\|$. In terms of finite groups representations (cf. \cite{Ser}), this means that there exists an irreducible  $B_0$-module of $V:=\mathbb{C}^{\mathbb{Z}/n}$ which is decomposed as a non-trivial sum of sub $A_0$-modules. In particular the centralizer of the representation $\rho_{B_0}$ is of infinite index in the centralizer of the representation $\rho_{A_0}$. This contradicts the assertion \ref{3.fini}. As a result,  $A=Z_{PSL(n,\mathbb{C})}(Z_{PSL(n,\mathbb{C})}(A))$.

In order to prove the last assertion of the corollary, it suffices to remark that $Z_{PSL(n,\mathbb{C})}(A)$ contains $\pi_n(H)$ and is, therefore, irreducible.  \end{proof}

In the next subsection, we introduce a correspondence between $n$-centralizer of an irreducible subgroup and alternate modules.

\subsection{The alternate module associated to a $n$-centralizer}\label{sec2sub2}

Definitions and propositions concerning alternate modules used here can be found in \cite{T-W}, \cite{Wal} and \cite{Gue2}. We recall that an alternate module $(A,\phi)$ is an abelian group equipped with a bilinear map $\phi:A\times A\rightarrow \mathbb{Q}/\mathbb{Z}$. We also remark that the group $ \mathbb{Q}/\mathbb{Z}$ contains a unique copy of $\mathbb{Z}/n$ (namely,  the subgroup generated by $\frac{1}{n}$). In the next proposition, we construct an alternate module out of a finite group $G$ containing a central cyclic subgroup $C$ such that $G/C$ is abelian.

\begin{prop}\label{3.assoc}

Let $n$ be a positive integer, $G$ be a finite group containing a central cyclic subgroup $C$ of order $n$ generated by $c_0$. Let $A:=G/C$ and $\pi$ be the natural projection of $G$ onto $A$, assume that $A$ is abelian. Taking for any $a\in A$ an element $\widehat{a}\in G$ such that $\pi(\widehat{a})=a$ (i.e. an arbitrary lift for $a$), we define :

\begin{displaymath}
\phi_G:
\left|
  \begin{array}{rcl}
   A\times A& \longrightarrow &\mathbb{Z}/n\\
 (a,b)& \longmapsto &\text{$\phi_G(a,b)$ verifying } [\widehat{a},\widehat{b}]=c_0^{\phi_G(a,b)}\\
  \end{array}
\right.
\end{displaymath}

Then  $\phi_G$ is well defined (i.e. does not depend on the chosen lifts). Furthermore, $(A,\phi_G)$ is an alternate module whose kernel $K_{\phi_G}$ is the image of the center of $G$ by  $\pi$.

\bigskip

Assume that $G_1$ and $G_2$ are  two groups containing one fixed central copy of $C$. Assume furthermore that $G_i/C=A_i$ is abelian for $i=1,2$. Finally, assume that there exists an isomorphism $f$ from $G_1$ to $G_2$  fixing $C$ point by point,  then the alternate modules $(A_1,\phi_{G_1})$ and $(A_2,\phi_{G_2})$ are isometric.

\end{prop}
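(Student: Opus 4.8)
The plan is to verify the two assertions of Proposition~\ref{3.assoc} in turn: first that $\phi_G$ is well defined and makes $(A,\phi_G)$ an alternate module with kernel $\pi(Z(G))$, and then that an isomorphism fixing $C$ pointwise induces an isometry.

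\textbf{Well-definedness and bilinearity.} First I would show the value $[\widehat{a},\widehat{b}]$ lies in $C$: since $A=G/C$ is abelian, $\pi([\widehat{a},\widehat{b}])=[\pi(\widehat{a}),\pi(\widehat{b})]=[a,b]=1$ in $A$, so $[\widehat{a},\widehat{b}]\in C=\langle c_0\rangle$, and the exponent $\phi_G(a,b)\in\mathbb{Z}/n$ is well defined once we know independence of lifts. Any two lifts of $a$ differ by an element $c\in C$, which is central, so $[\widehat{a}c,\widehat{b}]=[\widehat{a},\widehat{b}]$ and likewise in the second variable; hence $\phi_G$ does not depend on the chosen lifts. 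Bilinearity follows exactly as in the proof of Lemma~\ref{2l1}: fixing a lift $\widehat{ab}=\widehat{a}\widehat{b}$ (legitimate by lift-independence), the identity $[\widehat{a}\widehat{b},\widehat{c}]=\widehat{a}[\widehat{b},\widehat{c}]\widehat{a}^{-1}[\widehat{a},\widehat{c}]=[\widehat{b},\widehat{c}][\widehat{a},\widehat{c}]$ (using that $[\widehat{b},\widehat{c}]\in C$ is central) gives additivity in the first variable; additivity in the second variable is symmetric, or follows from the first together with the antisymmetry $\phi_G(b,a)=-\phi_G(a,b)$ coming from $[\widehat{b},\widehat{a}]=[\widehat{a},\widehat{b}]^{-1}$. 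The \emph{alternate} property $\phi_G(a,a)=0$ holds since $[\widehat{a},\widehat{a}]=1$. Finally, $a\in K_{\phi_G}$ iff $[\widehat{a},\widehat{b}]=1$ for all $b$, iff $\widehat{a}$ commutes with every element of $G$ (elements of $C$ being central anyway), iff $\widehat{a}\in Z(G)$; so $K_{\phi_G}=\pi(Z(G))$.

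\textbf{Functoriality / the isometry claim.} Given $f:G_1\to G_2$ an isomorphism fixing $C$ pointwise, it maps the central subgroup $C\le G_1$ onto $C\le G_2$ and hence descends to an isomorphism $\overline{f}:A_1=G_1/C\to A_2=G_2/C$. I would check $\overline{f}$ is an isometry: for $a,b\in A_1$ pick lifts $\widehat{a},\widehat{b}\in G_1$; then $f(\widehat{a}),f(\widehat{b})$ are lifts in $G_2$ of $\overline{f}(a),\overline{f}(b)$, and
\[
c_0^{\phi_{G_2}(\overline{f}(a),\overline{f}(b))}=[f(\widehat{a}),f(\widehat{b})]=f([\widehat{a},\widehat{b}])=f(c_0^{\phi_{G_1}(a,b)})=c_0^{\phi_{G_1}(a,b)},
\]
the last equality because $f$ fixes $C$ pointwise and $c_0\in C$. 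Since $c_0$ has order $n$, this forces $\phi_{G_2}(\overline{f}(a),\overline{f}(b))=\phi_{G_1}(a,b)$ in $\mathbb{Z}/n$, i.e. $\overline{f}$ is an isometry $(A_1,\phi_{G_1})\to(A_2,\phi_{G_2})$.

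\textbf{Expected main obstacle.} There is no serious obstacle here — the proposition is essentially bookkeeping with commutator identities — but the one point demanding care is the lift-independence argument underpinning bilinearity: one must make sure that when proving additivity one is allowed to choose the specific lift $\widehat{a}\widehat{b}$ for $ab$, which is exactly what the (already established) well-definedness grants, and that every commutator appearing is genuinely central in $G$ so that the rearrangements are valid. I would state that observation explicitly before manipulating the commutators.
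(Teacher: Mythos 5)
Your proposal is correct and follows essentially the same route as the paper's proof: lift-independence via centrality of $C$, bilinearity via the commutator identity of Lemma~\ref{2l1} together with the abelianity of $A$, the kernel identified as $\pi(Z(G))$ by the same commutation argument, and the identical computation $[f(\widehat{a}),f(\widehat{b})]=f([\widehat{a},\widehat{b}])$ for the isometry claim. You spell out some steps (notably the explicit commutator rearrangement and the care about choosing the lift $\widehat{a}\widehat{b}$) that the paper leaves as "clear" or "obvious," but the substance is the same.
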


\begin{proof}

The fact that $\phi_G$ has its values in $\mathbb{Z}/n$ is clear since $A$ is commutative. The fact that $\phi_G$ does not depend on the chosen lifts $\hat{\cdot}$ is obvious because two different lifts of the same element are the same element up to a central element. Changing one for another does not change anything in the commutator $[\cdot,\cdot]$. Remark furthermore that if $c_0^k$ is given, then $k$ is well defined modulo $n$ since $c_0$ is of order $n$. The fact that $\phi_G$ is bilinear is a consequence of lemma \ref{2l1} and of the abelianity of  $A$.  If $a\in A$ then $[\widehat{a},\widehat{a}]=I_n$. Whence $\phi_G(a,a)=0$ modulo $n$. Therefore $(A,\phi_G)$ is an alternate module.

\bigskip

Let $a\in A$ then $a\in K_{\phi_G}$ if and only if  $ [\widehat{a},\widehat{b}]=I_n$ for all $b\in A$, if and only if $\widehat{a}\in Z(G)$. Hence the radical of $(A,\phi_G)$ is the projection in $A$ of the center  of $G$.

\bigskip

Let $G_1$ and $G_2$ be like in the proposition. Then $f$ factors through $C\leq G_1$ and $f(C)=C\leq G_2$ and induces a group isomorphism $\overline{f}$ between  $A_1$ and $A_2$. If $a,b\in A_1$ :

\begin{align*}
\xi^{\phi_{G_2}(\overline{f}(a),\overline{f}(b))}&=[\widehat{\overline{f}(a)},\widehat{\overline{f}(b)}]\text{ by definition of  $\phi_{G_2}$}\\
&=f([\widehat{a},\widehat{b}]) \text{ by definition of $\overline{f}$}\\
&=f(\xi^{\phi_{G_1}(a,b)}) \text{ by definition of  $\phi_{G_1}$}\\
&=\xi^{\phi_{G_1}(a,b)} \text{ since $f$ fixes $C$ by assumption.}
\end{align*}

In particular the module $(A_1,\phi_{G_1})$ is isometric to the module $(A_2,\phi_{G_2})$ since $\overline{f}$ is a group isomorphism. \end{proof}

In the sequel,  given an irreducible subgroup $H$ in $SL(n,\mathbb{C})$, we  apply this proposition to $G:=U_n(H)$, $C:=Z(SL(n,\mathbb{C}))$, $c_0:=\xi I_n$ and $A:=~Z_n(H)$. The construction leads to an alternate module denoted $(Z_n(H),\phi_H)$ and called the \textit{associated alternate module} to $H$. Applying this proposition and corollary \ref{conjugclass}, the isometry class of $(Z_n(H),\phi_H)$ is invariant by conjugation of $Z_n(H)$.

\bigskip
Since we  want to classify conjugacy classes of centralizers of irreducible subgroups in $PSL(n,\mathbb{C})$, we will prove a lemma -a bit more general than we actually need it to be- that will eventually lead to a converse to the second statement in proposition \ref{3.assoc}. Roughly speaking, it states that if we are given two groups with a central cyclic subgroup $C$ such that it leads to isometric alternate modules, then the two groups are isomorphic (provided they verify a condition on the order of the elements).

\begin{lemme}\label{unicitext}

Let $n\geq 1$, $C$ a cyclic group of order $n$ (denoted multiplicatively with a generator $c_0$) and $A$ a finite abelian group. Let $(d_r,\dots,d_1)$ be the type of $A$, then $A$ is isomorphic to $\langle e_r\rangle\times \cdots\times\langle e_1\rangle$ where $e_i$ is an element of $a$ of order $d_i$.  

\bigskip

Let $G$ and $H$ be two central extensions of $A$ by $C$, we denote $\pi_G$ (resp. $\pi_H$) the projection of  $G$ (resp. $H$) on $A$. Assume that $\phi_G=\phi_H$ and for all $1\leq i\leq r$, there exists $g_i\in G$ (resp. $h_i\in H$) such that $\pi_G(g_i)=e_i$ (resp. $\pi_H(h_i)=e_i$) and  the order of $g_i$ is equal to the order of  $h_i$ which is either $d_i$ or $2d_i$. Then, there exists an isomorphism  $f$ between  $G$ and $H$ sending $c_0\in C\leq G$ to $c_0\in C\leq H$.

\end{lemme}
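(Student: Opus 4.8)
The plan is to build the isomorphism $f:G\to H$ explicitly on generators. Since $A=\langle e_r\rangle\times\cdots\times\langle e_1\rangle$, every element of $G$ can be written uniquely as $c_0^{k}g_r^{m_r}\cdots g_1^{m_1}$ with $k\in\mathbb{Z}/n$ and $m_i\in\mathbb{Z}/o_i$, where $o_i$ is the common order of $g_i$ and $h_i$ (either $d_i$ or $2d_i$); the analogous normal form holds in $H$ with the $h_i$. The natural candidate is $f(c_0^{k}g_r^{m_r}\cdots g_1^{m_1})=c_0^{k}h_r^{m_r}\cdots h_1^{m_1}$. The bijectivity of $f$ is then immediate from uniqueness of the normal forms (both sets have cardinality $n\prod_i o_i$), and $f$ fixes $c_0$ by construction, so the only real content is checking that $f$ is a homomorphism.

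To verify multiplicativity, I would reduce the product of two normal forms to a normal form using only two kinds of relations: first, the commutation relations $g_jg_i=c_0^{\phi_G(e_j,e_i)}g_ig_j$ coming from the definition of $\phi_G$ together with centrality of $c_0$, which let one sort the $g_i$-factors into the prescribed order at the cost of a power of $c_0$ that depends only on the exponents $m_i$ and on $\phi_G$; and second, the relation $g_i^{o_i}=c_0^{\varepsilon_i}$ for some fixed $\varepsilon_i\in\mathbb{Z}/n$, needed to reduce exponents modulo $o_i$. The key observation is that the power of $c_0$ produced by the sorting step is governed entirely by $\phi_G$, and by hypothesis $\phi_G=\phi_H$, so the sorting step behaves identically in $G$ and in $H$. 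For the second relation, one must show $g_i^{o_i}$ and $h_i^{o_i}$ are the same power of $c_0$: if $o_i=d_i$ then $g_i^{d_i}=1=h_i^{d_i}$ trivially; if $o_i=2d_i$ then $g_i^{d_i}$ is a central element of order $2$, hence equals the unique involution $c_0^{n/2}$ in $C$ (note $n$ is even in this case since $C$ contains an element of order $2$), and likewise $h_i^{d_i}=c_0^{n/2}$, so again $g_i^{o_i}=1=h_i^{o_i}$ matches. Thus all the structure constants needed to multiply normal forms agree between $G$ and $H$, which forces $f$ to be a homomorphism.

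The main obstacle I anticipate is bookkeeping: writing down precisely the power of $c_0$ that appears when one moves a block $g_r^{m_r}\cdots g_1^{m_1}$ past another such block and sorts the result, and being careful that these exponents are interpreted in $\mathbb{Z}/n$ while the $m_i$ live in $\mathbb{Z}/o_i$ — one must check that the value of $\phi_G(e_j,e_i)m_jm_i'$ modulo $n$ is well defined given that $m_i$ is only defined modulo $o_i$, which uses that $o_i e_i=0$ in $A$ kills the relevant multiple of $\phi_G$. A clean way to organize this is to avoid an explicit cocycle formula altogether: define $f$ on the generating set $\{c_0,g_1,\dots,g_r\}$ and check that $f$ respects a presentation of $G$ with relations $c_0^n=1$, $c_0$ central, $g_i^{o_i}=1$ (using the involution argument above to see the relator is exactly $g_i^{o_i}$, not $g_i^{o_i}c_0^{\text{something}}$), and $[g_j,g_i]=c_0^{\phi_G(e_j,e_i)}$; one then argues this really is a presentation by a counting argument (the group so presented has order at most $n\prod o_i=|G|$ and surjects onto $G$). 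Since the presentation of $H$ is the identical list of relations — here is exactly where $\phi_G=\phi_H$ and the order hypothesis are used — the assignment extends to an isomorphism, and it sends $c_0$ to $c_0$.
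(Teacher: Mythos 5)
Your overall strategy is the same as the paper's: define $f$ on a normal form built from the lifts $g_i$, reduce multiplicativity to comparing the $2$-cocycles of the two extensions, match the commutation data using $\phi_G=\phi_H$, and match the power data using the fact that a cyclic group has at most one involution. That last observation is exactly the paper's key step, and you have it. However, there is a concrete error in your bookkeeping that breaks two of your arguments: you take the exponent $m_i$ modulo $o_i$, the order of the lift $g_i$, rather than modulo $d_i$, the order of $e_i$ in $A$. When some $o_i=2d_i$, the normal form $c_0^{k}g_r^{m_r}\cdots g_1^{m_1}$ with $m_i\in\mathbb{Z}/o_i$ is \emph{not} unique (for instance $c_0^{n/2}$ and $g_i^{d_i}$ are two such expressions for the same element), and the cardinality claim $n\prod_i o_i=|G|$ is false, since $|G|=n|A|=n\prod_i d_i$, which is strictly smaller. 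Consequently bijectivity of $f$ is not ``immediate from uniqueness of the normal forms,'' and in your presentation variant the group presented by $c_0^n=1$, $c_0$ central, $g_i^{o_i}=1$, $[g_j,g_i]=c_0^{\phi_G(e_j,e_i)}$ can have order up to $n\prod_i o_i>|G|$ (the relations do not force $g_i^{d_i}$ into $\langle c_0\rangle$), so the counting argument that it maps isomorphically onto $G$ fails.

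The repair is exactly what the paper does: take $m_i\in\{0,\dots,d_i-1\}$, so that the normal form $c\,g_r^{m_r}\cdots g_1^{m_1}$ with $c\in C$ is genuinely unique and counts to $|G|$. The price is that reducing exponents now invokes the relation $g_i^{d_i}=c_0^{\varepsilon_i}$, where $\varepsilon_i=0$ if $o_i=d_i$ and $\varepsilon_i=n/2$ if $o_i=2d_i$, and one must check that this $\varepsilon_i$ is the \emph{same} for $g_i$ and for $h_i$ --- which is precisely what your unique-involution argument delivers (and is where the hypothesis that $g_i$ and $h_i$ have equal order is used). With the modulus corrected, the rest of your computation --- the sorting cost depending only on $\phi_G$, and the well-definedness remark about $\phi_G(e_j,e_i)m_jm_i'$ modulo $n$ --- goes through and coincides with the paper's explicit cocycle comparison.
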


\begin{proof}

We define a set theoretic section $u$ for  $\pi_G$ and $v$ for $\pi_H$. By definition of  $(e_i)$, for any element $a\in A$, there exists a unique  $r$-uple $(\alpha_r,\dots ,\alpha_1)$ of integers such that  :

$$a=\sum_{i=1}^r\alpha_i e_i\text{ and } 0\leq \alpha_i<d_i\text{.}  $$

We define  $u(a):=g_r^{\alpha_r}\dots g_1^{\alpha_1}$ and $v(a):=h_r^{\alpha_r}\dots h_1^{\alpha_1}$. By definition of $g_i$ and $h_i$,  $u$ and $v$ are respectively set-theoretic sections of  $\pi_G$ and $\pi_H$. Any element of  $G$  can be uniquely written as the product of an element $c$ of $C$ and  $u(a)$ where  $a\in A$ and we will define the isomorphism between $G$ and $H$ of the lemma as :

\begin{displaymath}
f:
\left|
  \begin{array}{rcl}
   G& \longrightarrow &H\\
   cu(a)& \longmapsto &cv(a)\\
  \end{array}
\right.\text{.}
\end{displaymath}

The application $f$ is onto, since any element $h$ in the group $H$ can be written as $cv(a)$ and $f(cu(a))=cv(a)=h$, by definition. Furthermore, if $f(c_1u(a_1))=f(c_2u(a_2))$, then $a_1=\pi_H(f(c_1u(a_1)))=\pi_H(f(c_2u(a_2)))=a_2$. It follows that $c_1=c_2$ and hence $c_1u(a_1)=c_2u(a_2)$ so that $f$ is a bijection. Furthermore, $f$ fixes point by point the elements of  $C$ by definition. Remark that those properties of $f$ are easily deduced from the very definition of $f$. We shall need the assumptions of the lemma to show that $f$ is a group morphism.  Let $k_1,k_2\in G$, and write $k_1=c_1u(a_1)$, $k_2=c_2u(a_2)$ where $c_1,c_2\in C$ and $a_1,a_2\in A$. Then  :

\begin{align*}
&f(k_1k_2)= f(k_1)f(k_2)\\
\Leftrightarrow &f(c_1c_2u(a_1)u(a_2))=c_1c_2u(a_1)u(a_2)\\
\Leftrightarrow &f(\underbrace{c_1c_2u(a_1)u(a_2)u(a_1a_2)^{-1}}_{\in C}u(a_1a_2))=c_1c_2v(a_1)v(a_2)\\
\Leftrightarrow &c_1c_2u(a_1)u(a_2)u(a_1a_2)^{-1}v(a_1a_2)=c_1c_2v(a_1)v(a_2)\\
\Leftrightarrow &u(a_1)u(a_2)u(a_1a_2)^{-1}=v(a_1)v(a_2)v(a_1a_2)^{-1}\text{.}
\end{align*}

We need to check that for all  $a_1,a_2\in A$ :

\begin{equation}
u(a_1)u(a_2)u(a_1a_2)^{-1}=v(a_1)v(a_2)v(a_1a_2)^{-1}\text{.}
\label{3.averifier}
\end{equation}

For $i=1,2$, we denote :

$$a_i:=\sum_{j=1}^r\gamma_j^ie_j\text{ where } 0\leq \gamma_j^i<d_j\text{.}  $$

We also write for  $1\leq j\leq r$,  $\delta_j:=\left\lbrace
\begin{array}{ll}
\gamma_j^1+\gamma_j^2&\text{if $\gamma_j^1+\gamma_j^2<d_j$ }\\
\gamma_j^1+\gamma_j^2-d_j&\text{ else\text{.} }
\end{array}\right.$. Then :

$$a_1+a_2=\sum_{j=1}^r\delta_je_j\text{ with } 0\leq \delta_j<d_j\text{.} $$

By definition of $u$ and the expressions of $a_1$ and $a_2$ given above   :

\begin{align*}
u(a_1)u(a_2)&=g_r^{\gamma_r^1}\dots g_1^{\gamma_1^1}g_r^{\gamma_r^2}\dots g_1^{\gamma_1^2}\\
&=g_r^{\gamma_r^1}\dots g_2^{\gamma_2^1}g_r^{\gamma_r^2}g_1^{\gamma_1^1}c_0^{\gamma_1^1\gamma_r^2\phi_G(e_1,e_r)}g_{r-1}^{\gamma_{r-1}^2}\dots g_1^{\gamma_1^2}\text{.}
\end{align*}

We used the equality  $g_rg_1=c_0^{\phi_{G}(e_r,e_1)}g_1g_r$ and the fact $c_0$ is in the center of $G$. By a straightforward induction :

$$u(a_1)u(a_2)=g_r^{\gamma_r^1+\gamma_r^2}g_{r-1}^{\gamma_{r-1}^1}\dots g_1^{\gamma_1^1}g_{r-1}^{\gamma_{r-1}^2}\dots g_1^{\gamma_1^2}\prod_{i=2}^{r-1}c_0^{\phi_{G}(e_i,e_r)\gamma_i^1\gamma_r^2}\text{.}$$

We may do this again for  $g_{r-1}^{\gamma_{r-1}^2}$ up to  $g_{1}^{\gamma_{1}^2}$ and then we have :

$$u(a_1)u(a_2)=g_r^{\gamma_r^1+\gamma_r^2}\dots g_1^{\gamma_1^1+\gamma_1^2}\prod_{j=1}^{r}\prod_{i=1}^{j-1}c_0^{\phi_{G}(e_i,e_j)\gamma_i^1\gamma_j^2} \text{.}$$

Finally remark that for  $1\leq i \leq r$,  $g_i^{\gamma_i^1+\gamma_i^2-\delta_i}$ is in  $C$ whence it is central. So that :

\begin{align*}
g_r^{\gamma_r^1+\gamma_r^2}\dots g_1^{\gamma_1^1+\gamma_1^2}&=g_r^{\gamma_r^1+\gamma_r^2-\delta_r}g_r^{\delta_r}\dots g_1^{\gamma_1^1+\gamma_1^2-\delta_1}g_1^{\delta_1}\\
&=g_r^{\gamma_r^1+\gamma_r^2-\delta_r}\dots g_1^{\gamma_1^1+\gamma_1^2-\delta_1}g_r^{\delta_r}\dots g_1^{\delta_1}\\
&=\prod_{i=1}^rg_i^{\gamma_i^1+\gamma_i^2-\delta_i}u(a_1+a_2)\text{.}
\end{align*}

\begin{equation}
\text{ Finally  : }u(a_1)u(a_2)u(a_1+a_2)^{-1}=\prod_{i=1}^rg_i^{\gamma_i^1+\gamma_i^2-\delta_i}\prod_{j=1}^{r}\prod_{i=1}^{j-1}c_0^{\phi_{G}(e_i,e_j)\gamma_i^1\gamma_j^2} \text{.} 
\label{zu}
\end{equation}

Using the same arguments applied to $H$ :

\begin{equation}
v(a_1)v(a_2)v(a_1+a_2)^{-1}=\prod_{i=1}^rh_i^{\gamma_i^1+\gamma_i^2-\delta_i}\prod_{j=1}^{r}\prod_{i=1}^{j-1}c_0^{\phi_{H}(e_i,e_j)\gamma_i^1\gamma_j^2}\text{.}
\label{zv}
\end{equation}

By assumption, $\phi_G$ and $\phi_H$ are equal, whence :

\begin{equation}
\prod_{i=1}^{j-1}c_0^{\phi_{G}(e_i,e_j)\gamma_i^1\gamma_j^2}=\prod_{i=1}^{j-1}c_0^{\phi_{H}(e_i,e_j)\gamma_i^1\gamma_j^2}\text{.}
\label{psiH}
\end{equation}

Let $1\leq i\leq r$, denote $\lambda_i:=g_i^{\gamma_i^1+\gamma_i^2-\delta_i}$ and $\mu_i:=h_i^{\gamma_i^1+\gamma_i^2-\delta_i}$.  By assumption $g_i$ and $h_i$ have the same order which is either $d_i$ or $2d_i$. If the order of $g_i$ is $d_i$ then, since $\gamma_i^1+\gamma_i^2-\delta_i=0$ or $d_i$, we have that $\lambda_i=1_G=\mu_i$. If $g_i$ and $h_i$ are both of order $2d_i$, then, if $\gamma_i^1+\gamma_i^2-\delta_i=0$, and we have  $\lambda_i=1_G=\mu_i$, otherwise $\gamma_i^1+\gamma_i^2-\delta_i=d_i$, in which case $\lambda_i$ and $\mu_i$ are both elements of order  $2$ in a cyclic group  $C$, but in a cyclic group, there exists at most  one element of order $2$ whence $\lambda_i=\mu_i$.

\bigskip

In any case $h_i^{\gamma_i^1+\gamma_i^2-\delta_i}=g_i^{\gamma_i^1+\gamma_i^2-\delta_i}$ for  $1\leq i \leq r$. It follows that :

$$\prod_{i=1}^rg_i^{\gamma_i^1+\gamma_i^2-\delta_i}=\prod_{i=1}^rh_i^{\gamma_i^1+\gamma_i^2-\delta_i}\text{.}$$

Combining this equality to the equations \ref{zu},\ref{zv}  and \ref{psiH}, we get  for $a_1,a_2\in A$ :

$$u(a_1)u(a_2)u(a_1+a_2)^{-1}=v(a_1)v(a_2)v(a_1+a_2)^{-1}\text{.}$$

Therefore, $f$ is a group morphism and the lemma is proven.\end{proof}

We are now ready to state our first theorem.

\begin{thm}\label{classif1}

Let $n\geq 1$, $\overline{H_1}$ and $\overline{H_2}$  be two irreducible subgroups of $PSL(n,\mathbb{C})$. Denote $H_i:=\pi_n^{-1}(\overline{H_i})$ the pull-back in $SL(n,\mathbb{C})$. Then $Z_{PSL(n,\mathbb{C})}(\overline{H_1})$ is conjugate to $Z_{PSL(n,\mathbb{C})}(\overline{H_2})$ if and only if $(Z_{PSL(n,\mathbb{C})}(\overline{H_1}),\phi_{H_1})$ is isometric to $(Z_{PSL(n,\mathbb{C})}(\overline{H_2}),\phi_{H_2})$.

\end{thm}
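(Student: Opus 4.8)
The plan is to prove the two implications separately, the forward direction being essentially already established in the excerpt. First, if $Z_{PSL(n,\mathbb{C})}(\overline{H_1})$ is conjugate to $Z_{PSL(n,\mathbb{C})}(\overline{H_2})$, then by corollary \ref{conjugclass} there is a group isomorphism $f:U_n(H_1)\to U_n(H_2)$ fixing $Z(SL(n,\mathbb{C}))=\langle\xi I_n\rangle$ pointwise. Applying the second statement of proposition \ref{3.assoc} to $G_1:=U_n(H_1)$, $G_2:=U_n(H_2)$, $C:=\langle\xi I_n\rangle$, $c_0:=\xi I_n$, this isomorphism shows directly that the associated alternate modules $(Z_n(H_1),\phi_{H_1})$ and $(Z_n(H_2),\phi_{H_2})$ are isometric. (Note $Z_{PSL(n,\mathbb{C})}(\overline{H_i})=Z_n(H_i)$ by the remark following the definition of $U_d(H)$.) This is the easy direction.

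For the converse, suppose $(Z_n(H_1),\phi_{H_1})$ is isometric to $(Z_n(H_2),\phi_{H_2})$, via an isometry $\psi$. The strategy is to build a group isomorphism $f:U_n(H_1)\to U_n(H_2)$ fixing $\langle\xi I_n\rangle$ pointwise, and then invoke the ``converse'' direction of corollary \ref{conjugclass} to conclude conjugacy. To build $f$, I would apply lemma \ref{unicitext} with $C:=\langle\xi I_n\rangle$ and $A:=Z_n(H_1)\cong Z_n(H_2)$ (identified via $\psi$): both $U_n(H_1)$ and $U_n(H_2)$ are central extensions of $A$ by $C$, and after transporting along $\psi$ the two commutator forms $\phi_{H_1}$ and $\phi_{H_2}$ agree. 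The only hypothesis of lemma \ref{unicitext} that still needs checking is the condition on orders of lifts: for each generator $e_i$ of a cyclic factor of $A$ of order $d_i$, we must exhibit lifts $g_i\in U_n(H_1)$ and $h_i\in U_n(H_2)$ with $\pi_n(g_i)=\pi_n(h_i)=e_i$ (under the identification) having \emph{equal} order, which is necessarily either $d_i$ or $2d_i$.

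The main obstacle, then, is this order-matching step. Here proposition \ref{3.diagonal} does the work: if $\overline{g}\in Z_n(H)$ has order $d$, then any lift $g\in U_n(H)$ is conjugate to $\lambda\,\mathrm{diag}(I_{n/d},\xi^{n/d}I_{n/d},\dots,\xi^{n/d(d-1)}I_{n/d})$, and the order of $g$ in $SL(n,\mathbb{C})$ is completely determined — up to the binary choice encoded by whether $\lambda\in\langle\xi I_n\rangle$ or $\lambda\in\sqrt{\xi^{-n(d-1)/d}}\langle\xi I_n\rangle$ — by the arithmetic of $n$ and $d$ alone (the parities of $d$ and $n/d$). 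In particular, for a fixed lift the possible orders form a coset structure, and among the $n$ lifts of $e_i$ (the $C$-coset $g\langle\xi I_n\rangle$) the set of orders attained depends only on $n$ and $d_i=\mathrm{ord}(e_i)$; moreover when $d$ is even with $n/d$ odd the ``exceptional'' value makes one choose a lift of order exactly $2d_i$ and otherwise one can choose a lift of order exactly $d_i$, again depending only on $n,d_i$. Since $e_i$ has the same order $d_i$ on both sides, we can pick $g_i$ and $h_i$ with matching order from the prescribed list, and the order is $d_i$ or $2d_i$ as required. With the hypotheses of lemma \ref{unicitext} verified, the lemma produces the desired $f$, and corollary \ref{conjugclass} finishes the proof. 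I would also remark at the start that the isometry class of $(Z_n(H),\phi_H)$ is a conjugacy invariant of $Z_n(H)$ (already noted after proposition \ref{3.assoc}), so the statement is well posed.
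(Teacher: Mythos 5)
Your proposal is correct and follows essentially the same route as the paper: the forward direction via Corollary \ref{conjugclass} and Proposition \ref{3.assoc}, and the converse by normalizing lifts of each generator using Proposition \ref{3.diagonal} (taking $\lambda=1$ when $d$ is odd or $n/d$ is even, giving a lift of order $d$, and the square-root value otherwise, giving order $2d$), then applying Lemma \ref{unicitext} and concluding with Corollary \ref{conjugclass}. The order-matching step you identify as the main obstacle is handled in the paper exactly as you describe.
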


\begin{proof}
First, remark that for $i=1,2$, $Z_{PSL(n,\mathbb{C})}(\overline{H_i})=Z_n(H_i)$. If $Z_n(H_1)$ and $Z_n(H_2)$, proposition \ref{3.assoc} and corollary \ref{conjugclass} imply that $(Z_n(H_1),\phi_{H_1})$ and  $(Z_n(H_2),\phi_{H_2})$ are isometric.

\bigskip
Conversly, say   $(Z_n(H_1),\phi_{H_1})$ and $(Z_n(H_2),\phi_{H_2})$ are isometric. Then, up to composing the projection $\pi_1:U_n(H_1)\rightarrow Z_n(H_1)$ by the isometry between   $(Z_n(H_1),\phi_{H_1})$ and $(Z_n(H_2),\phi_{H_2})$, we may assume that the alternate modules constructed from $U_n(H_i)$ are both equal to $(A,\phi)$. Let $a$ be in $A$ and $d$ be its order, then, applying proposition \ref{3.diagonal}, there exists $u_i\in U_n(H_i)$ such that :

$$u_i\text{ is conjugate to }\lambda \begin{pmatrix}I_{\frac{n}{d}}&&&\\&\xi^{\frac{n}{d}}I_{\frac{n}{d}}&&\\&&\ddots&\\&&&\xi^{\frac{n}{d}(d-1)}I_{\frac{n}{d}}\end{pmatrix} $$

$$\text{ with }\lambda\in \left\lbrace
\begin{array}{ll}
\langle \xi I_n\rangle  &   \text{if $d$ is odd or $d$   even and $n/d$ even} \\
\sqrt{\xi^{-\frac{n(d-1)}{d}}}\langle \xi I_n\rangle& \mbox{if $d$ is even and $n/d$ odd}\text{.} 
\end{array}
\right.$$

Up to multiplying $u_i$ by $\xi^k$, we may take $\lambda=1$ if $d$ is odd or $n/d$ is even (in which case the lift $u_i$ of $a$ is of order $d$) and $\lambda=\sqrt{\xi^{-\frac{n(d-1)}{d}}}$ else (in which case the lift $u_i$ of $a$ is of order $2d$). We are, now, in the conditions of application of lemma~\ref{unicitext}  and there exists a group isomorphism between $U_n(H_1)$ and $U_n(H_2)$ fixing, point by point $\langle \xi I_n\rangle$. Hence $Z_n(H_1)$ and $Z_n(H_2)$ are conjugate by corollary \ref{conjugclass}. \end{proof}

 In the next section, we shall see under which necessary and sufficient condition  an alternate module $(A,\phi)$ is associated to a centralizer of an irreducible subgroup in  $PSL(n,\mathbb{C})$.

\section{Classification of   centralizers in $PSL(n,\mathbb{C})$}\label{sec3}

In this section, we characterize the alternate modules associated to centralizers of irreducible subgroups in $PSL(n,\mathbb{C})$. Recall first that applying corollary \ref{3.ncardinal}, if an abelian group is isomorphic to such centralizer then its order divides $n^2$.  A centralizer of an irreducible subgroup in $PSL(n,\mathbb{C})$ is   \textit{full} if its order is $n^2$.

In the first subsection, we give a necessary and sufficient condition for an alternate module to be associated to such centralizer. Full centralizers play a key role in the study. In the second section, we focus on the consequences of this result.

\subsection{$n$-subsymplectic modules and associated centralizers}\label{sec3sub1}

Once again, we begin with a result from representation theory. Let $G$ be a finite group and $\rho:G\rightarrow GL(n,\mathbb{C})$ be a representation of $G$ acting linearly on $V^{\mathbb{Z}/n}$ through $\rho$ then $V$ may be decomposed as a sum of maximal isotypic sub-modules $V_1$, \dots, $V_k$ (i.e. which are linearly equivalent to $\lambda_i\cdot(W_i,\rho_i)$ where $(W_i,\rho_i)$ is an irreducible representation of $G$ and $\lambda_i>0$). Furthermore, up to the order of $(V_i)$, the decomposition happens to be unique. This leads to a technical lemma :
\begin{lemme}\label{isotypic}

Let $n\geq 1$, $G$ be a finite subgroup of $GL(n,\mathbb{C})$. Define $\iota_G$ to be the inclusion of $G$ into $GL(n,\mathbb{C})$. Let  $V_1\oplus\cdots\oplus V_k$ be the maximal isotypic decomposition of $(V,\iota_G)$.  Then $N_{GL(n,\mathbb{C})}(G)$ acts on the set $\{V_1,\dots,V_k\}$ of subspaces occuring in the maximal isotypic decomposition of $(V,\iota_G)$.

\end{lemme}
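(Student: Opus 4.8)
The plan is to unwind what it means for $N_{GL(n,\mathbb{C})}(G)$ to act on the set $\{V_1,\dots,V_k\}$, and to show that conjugating $G$ by a normalizing element $g$ permutes the isotypic components. First I would fix notation: for $g \in N := N_{GL(n,\mathbb{C})}(G)$ and a subspace $W \subseteq V$, set $g\cdot W := g(W)$, the image of $W$ under the linear map $g$. This is the candidate action; it is obviously an action of $GL(n,\mathbb{C})$ on the set of all subspaces of $V$ (it restricts the tautological $GL(n,\mathbb{C})$-action on the Grassmannian), so the only thing to check is that it preserves the finite subset $\{V_1,\dots,V_k\}$ when $g$ normalizes $G$.

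The key step is the following observation. For $g \in N$, the map $W \mapsto g(W)$ sends a $G$-submodule of $(V,\iota_G)$ to another $G$-submodule, but with respect to a twisted action: if $W$ is stable under $\iota_G(h)$ for all $h \in G$, then $g(W)$ is stable under $g\iota_G(h)g^{-1} = \iota_G(ghg^{-1})$, and since $g$ normalizes $G$, as $h$ ranges over $G$ so does $ghg^{-1}$. Hence $g(W)$ is again an $\iota_G(G)$-stable subspace, i.e. a $G$-submodule of $(V,\iota_G)$. More precisely, if $\mathrm{ad}_g : G \to G$ denotes the automorphism $h \mapsto g^{-1}hg$, then $g : (W, \iota_G \circ \mathrm{ad}_g|_W) \to (g(W), \iota_G|_{g(W)})$ is a $G$-module isomorphism (precomposing a representation with an automorphism of $G$ permutes irreducible characters, hence permutes isotypic types and sends a maximal isotypic component to a maximal isotypic component). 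Therefore $g$ carries the maximal isotypic decomposition of $V$ to another maximal isotypic decomposition of $V$; by the uniqueness (up to order) of the maximal isotypic decomposition recalled just before the lemma, $g$ must permute $\{V_1,\dots,V_k\}$. This shows $g \cdot V_i \in \{V_1,\dots,V_k\}$ for each $i$, so $N$ acts on this set. I would close by noting compatibility with composition, $gg' \cdot W = g(g'(W))$, and that the identity acts trivially, so this is genuinely a group action.

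The main obstacle — and it is a mild one — is making the uniqueness argument airtight: one must be careful that "maximal isotypic component" is intrinsic to the $G$-module structure of $V$ and does not depend on a choice, so that applying the automorphism $\mathrm{ad}_g$ to the representation and then comparing decompositions is legitimate. Concretely, the cleanest formulation is: $V_i$ is the image of the projection onto the $\chi_i$-isotypic part, where $\chi_i$ runs over $\mathrm{Irr}(G)$; precomposition with $\mathrm{ad}_g$ replaces $\chi_i$ by $\chi_i \circ \mathrm{ad}_g$, which is again an irreducible character, so the set of isotypic parts is permuted. Everything else is routine, so I would present the proof in a few lines: twist by $\mathrm{ad}_g$, note $g$ realizes a module isomorphism between the twisted and untwisted pictures, invoke uniqueness of the isotypic decomposition, and conclude that $g$ permutes $\{V_1,\dots,V_k\}$.
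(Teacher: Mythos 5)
Your proof is correct and follows essentially the same route as the paper: both arguments rest on the observation that for $g$ in the normalizer, $g(V_i)$ is a $G$-submodule isomorphic to $V_i$ twisted by the conjugation automorphism, hence again isotypic, and then conclude by maximality/uniqueness of the isotypic decomposition that $g$ permutes $\{V_1,\dots,V_k\}$. The only cosmetic difference is that the paper phrases the last step as a two-sided containment argument ($gV_i\leq V_j$ and $V_i\leq g^{-1}V_j$ with $V_i$ maximal isotypic) rather than invoking uniqueness of the full decomposition, but these are interchangeable.
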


\begin{proof}

Let $n$ be in $N_{GL(n,\mathbb{C})}(G)$ and $1\leq i\leq k$. For all $g\in G$ and $v\in V_i$ :

$$g\cdot (n(v_i))=(gn)\cdot v_i=n(n^{-1}gn)\cdot(v_i)=n\overbrace{\underbrace{n^{-1}gn}_{\in G}\cdot(v_i)}^{\in V_i}\text{.}$$

As a result $nV_i$ is stable by $G$ and $nV_i$ is a submodule of $V$. Furthermore, if $(V_i,\rho_i')$ is the induced representation  on $V_i$ by $\iota_G$, then the representation on $nV_i$ induced by $\iota_G$ is $(nV_i,\rho_i'(n^{-1} \cdot n))$. In particular, $V_i$ being isotypic,   $nV_i$ is also isotypic and is therefore included in a maximal isotypic submodule.  This implies that there exists $1\leq j\leq r$ such that  $nV_i\leq V_j\Leftrightarrow V_i\leq n^{-1}V_j $.

Using the same argument, $ n^{-1}V_j $ is also isotypic, but it contains $V_i$ which is maximal isotypic, it follows that $V_i=n^{-1}V_j$ whence $nV_i=V_j$. \end{proof}

We recall   notations and  results from \cite{Gue2}. Let $(A,\phi)$ be an alternate module, we say that $K$ included in $A$ is \textit{isotropic} if $K$ is orthogonal to itself. We say that $L$ included in  $A$ is \textit{Lagrangian} if $L^{\perp}=L$. Remark that (proposition 2 in loc. cit.) Lagrangians are exactly the maximal isotropic subsets in $A$. Likewise, we have proven that any Lagrangian of the alternate module $(A,\phi)$ is of order $n_{A,\phi}:=\sqrt{|A||K_{\phi}|}$ where $K_{\phi}$ denotes the radical (or kernel) of $(A,\phi)$.

\begin{prop}\label{cardlag}

Let $n\geq 1$ and  $H$ be an irreducible subgroup of $SL(n,\mathbb{C})$, then  the order $n_{Z_n(H),\phi_H}$ of Lagrangians in $(Z_n(H),\phi_H)$ divides $n$.

\end{prop}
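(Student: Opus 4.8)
The plan is to realize a Lagrangian $L$ of $(Z_n(H),\phi_H)$ inside $SL(n,\mathbb{C})$ as an abelian group whose pull-back in $SL(n,\mathbb{C})$ acts irreducibly, and then to read off its order from the dimension. More precisely, let $L\leq Z_n(H)$ be a Lagrangian and set $U:=\pi_n^{-1}(L)\leq U_n(H)$. Since $L$ is isotropic for $\phi_H$, by definition of $\phi_H$ any two lifts $\widehat a,\widehat b\in U$ commute, so $U$ is an \emph{abelian} subgroup of $SL(n,\mathbb{C})$; it is finite because $U_n(H)$ is. The key point is that $U$ acts irreducibly on $V:=\mathbb{C}^{\mathbb{Z}/n}$: indeed, the radical (kernel) $K_{\phi_H}$ of $(Z_n(H),\phi_H)$ is, by proposition \ref{3.assoc}, the image under $\pi_n$ of the center of $U_n(H)$, and since $L=L^\perp\supseteq K_{\phi_H}$, the group $U$ contains $\pi_n^{-1}(K_{\phi_H})=Z(U_n(H))$. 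One should then argue that $U$ being a maximal isotropic lift forces $U$ to be a maximal abelian subgroup of $U_n(H)$ that still acts without a common eigenline distinguishing proper submodules — the cleanest route is to show directly that $U$ has no proper nonzero invariant subspace by a counting/character argument.

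The main computation I would carry out is the following. $U$ is a finite abelian subgroup of $GL(V)$, hence $V$ decomposes as a direct sum of one-dimensional $U$-isotypic (i.e. eigen-) subspaces indexed by the characters of $U$ occurring in $V$. I want to show all these eigenspaces have the same dimension and that there are exactly $[U:Z(U_n(H))]$... no — rather, I use the character $\chi_H$ of proposition \ref{3.car}: restricting $\iota_H$ to $U$ and computing $\|\chi_H|_U\|^2=\frac{1}{|U|}\sum_{u\in U}\chi_H(u)\chi_H(u^{-1})=\frac{n^3}{|U|}$, exactly as in corollary \ref{3.ncardinal}. Now $|U|=n\cdot|L|=n\cdot n_{Z_n(H),\phi_H}$. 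On the other hand, since $U$ is abelian, $V$ restricted to $U$ is a sum of one-dimensional representations, so $\|\chi_H|_U\|^2$ equals the sum of the squares of the multiplicities of the distinct characters of $U$ appearing in $V$; writing $V=\bigoplus_{i=1}^s m_i W_i$ with $W_i$ distinct characters of $U$, we get $\sum m_i^2 = n^3/|U|$ and $\sum m_i = n$. By Cauchy–Schwarz $\sum m_i^2 \geq (\sum m_i)^2/s = n^2/s$, so $s\geq n^2/(n^3/|U|)=|U|/n=|L|=n_{Z_n(H),\phi_H}$.

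To get the divisibility I would complement this with the reverse-type bound coming from the structure of $U$ inside the abelian group $Z_n(H)$: the distinct characters of $U$ occurring in $V$ must be permuted transitively-in-blocks by $\pi_n^{-1}(Z_n(H))=U_n(H)$ via lemma \ref{2.2l1} (each $h\in H$, or more precisely each element of $U_n(H)\setminus U$, translates the set of occurring characters), and since $U$ is maximal isotropic this action together with $U$ itself accounts for the whole picture, forcing $s$ to equal $[U:(\text{the subgroup of }U\text{ acting by scalars})]$ which divides $|U|/n = n_{Z_n(H),\phi_H}$ — and crucially $s$ also divides $n$ because $s$ is the number of eigenspaces of a single well-chosen element $u\in U$ whose nonzero eigenvalues form a coset of a subgroup of $\langle\xi\rangle$ (argue as in proposition \ref{3.diagonal}, where the number of distinct eigenvalues of such a $u$ always divides $n$). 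Combining $s=n_{Z_n(H),\phi_H}$ with $s\mid n$ gives the claim.

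\textbf{The main obstacle.} The delicate part is the sandwich argument showing $s$ is \emph{exactly} $n_{Z_n(H),\phi_H}$ and that it divides $n$: the Cauchy–Schwarz inequality above only gives $s\geq n_{Z_n(H),\phi_H}$, and one needs the maximal-isotropy of $L$ (equivalently $L=L^\perp$, not merely $L$ isotropic) to pin down equality — this is exactly where the hypothesis "Lagrangian" rather than "isotropic" is used. I expect the cleanest way to close this gap is to exhibit one element $u\in U$ whose image generates $L/K_{\phi_H}$ cyclically on a suitable factor and invoke proposition \ref{3.diagonal} to control its eigenvalue multiplicities, then bootstrap across a generating set of $L$; handling non-cyclic $L$ and the parity subtleties in the scalar $\lambda$ of proposition \ref{3.diagonal} will be the fiddly bookkeeping.
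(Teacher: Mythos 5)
Your proposal has the right skeleton --- pull back the Lagrangian to an abelian subgroup $L_0:=\pi_n^{-1}(L)$ of $U_n(H)$, compute $\|\chi_{H|L_0}\|^2=n^3/|L_0|$ from proposition \ref{3.car}, and compare with the decomposition of $V$ into characters of $L_0$ --- and this is indeed the paper's strategy. But there is a genuine gap exactly where you flag one, and your proposed ways of closing it would not work. The Cauchy--Schwarz step only yields $s\geq |L|$; what is missing is the reason the multiplicities $m_i$ are all \emph{equal}. The paper's key observation is that $H$ normalizes $L_0$ (since $hxh^{-1}\in\langle\xi I_n\rangle x$ for $x\in U_n(H)$ and $\langle\xi I_n\rangle\subseteq L_0$), hence by lemma \ref{isotypic} $H$ permutes the maximal isotypic components $V_1,\dots,V_k$ of $V|_{L_0}$, and irreducibility of $H$ forces this permutation action to be \emph{transitive}. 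Therefore all $m_i$ equal a common value $\lambda$, and the two identities $k\lambda=n$ and $k\lambda^2=n^3/|L_0|$ give $|L|=n/\lambda=k$, which divides $n$ simply because $n=k\lambda$. Your substitute arguments miss this: the characters are not translated by "$U_n(H)\setminus U$" in any useful transitive way (it is $H$, which need not lie in $U_n(H)$ in a controlled fashion, whose irreducibility does the work), and the count $s$ is not the number of eigenvalues of a single well-chosen $u\in U$ --- for non-cyclic $L$ the joint eigenspaces of $L_0$ are strictly finer than the eigenspaces of any one element, so that route cannot give $s\mid n$.

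Two further inaccuracies worth correcting. First, your "key point" that $U$ acts irreducibly on $V$ is false whenever $n>1$: a finite abelian subgroup of $GL(n,\mathbb{C})$ always stabilizes an eigenline. Fortunately nothing downstream depends on it. Second, you locate the use of the hypothesis "Lagrangian" in the wrong place: maximality of $L$ is not what pins down the multiplicities --- the paper's argument applies verbatim to any \emph{isotropic} subgroup and shows its order divides $n$; the statement is phrased for Lagrangians only because $n_{Z_n(H),\phi_H}$ is by definition the order of a Lagrangian. So the missing ingredient is not $L=L^{\perp}$ but the transitivity of the $H$-action on isotypic components.
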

 
\begin{proof}

Let $K$ be an isotropic subgroup in $(Z_n(H),\phi_H)$ and $K_0$  be its pull-back in $U_n(H)$. If $k_1,k_2\in K_0$ then :

\begin{align*}
\text{}[k_1,k_2]&=\xi^{\phi_H(\pi_n(k_1),\pi_n(k_2))}\text{ by definition of $\phi_H$}\\
&=I_n\text{ since $(K,\phi_{H|K\times K})$ is trivial}\text{.} 
\end{align*}

Therefore, if $K$ is isotropic, then $\pi_n^{-1}(K)$ is an abelian subgroup in $U_n(H)$. In particular, if $L$ is a Lagrangian in $(Z_n(H),\phi_H)$, then $L_0:=\pi_n^{-1}(L)$ is an abelian subgroup of $U_n(H)$.  Denote $\iota_0: L_0\rightarrow SL(n,\mathbb{C})$   the inclusion of $L_0$ in $SL(n,\mathbb{C})$. Let $h\in H$ and $x\in L_0$ then $hxh^{-1}\in \langle \xi I_n\rangle x$ since $x\in U_n(H)$. Because $\langle \xi I_n\rangle$ is included in $L_0$, it follows that $hxh^{-1}$ belongs to  $L_0$ as well. Whence,  $H$ normalizes $L_0$.

\bigskip

We denote $V_1$, \dots, $V_k$ to be the maximal isotypic subspaces occuring in $(V,\iota_0)$. Using lemma \ref{isotypic}, $H$ acts on $\{V_1,\dots,V_k\}$. Furthermore, if the action were not transitive then $H$ would stabilize a non-trivial subspace of $V$  contradicting its irreducibility. Whence, the action of $H$ on $\{V_1,\dots,V_k\}$ is transitive and, in particular, $V_1$,\dots, $V_k$ share the same dimension $\lambda>0$. 

\bigskip

Since $L_0$ is an abelian group, each  $V_i$ can be decomposed as $\lambda$ copies of a $1$-dimensional representation $(W_i,\rho_i)$ of $L_0$. We denote $\chi_0$ to be the character of $\iota_0$ and $\chi_i $ the character of $(V_i,\iota_{0,|V_i})$. By considerations of finite groups representation theory (see \cite{Ser}), $\|\chi_i\|^2=\lambda^2$ and by orthogonality of the characters  $\chi_i$ and $\chi_j$ for $i\neq j$ :

\begin{align*}
\|\chi_0\|^2=k\lambda^2=\lambda \underbrace{\dim(V)}_{=n}\text{.} 
\end{align*}

On the other hand, $\chi_0=\chi_{H| L_0}$ where $\chi_H$ is the character of the standard representation $\iota_H$ of $U_n(H)$ whose character has been computed in proposition \ref{3.car}. Hence~:

$$\|\chi_0\|^2=\frac{n^3}{|L_0|}\text{.} $$

As a result, we have that $|L_0|=n\frac{n}{\lambda}$. Since $L_0=\pi_n^{-1}(L)$ and the kernel of $\pi_n$ is of order $n$, we end up with $|L|=\frac{n}{\lambda}$. In particular the order of $L$, which is $n_{Z_n(H),\phi_H}$, divides $n$.  \end{proof}

We just found a necessary condition (on the cardinality of Lagrangians) for an alternate module to be associated to the centralizer of an irreducible subgroup of  $PSL(n,\mathbb{C})$. It will be proven to be a sufficient condition as  well in theorem \ref{classif2}. In order to prove this, we need to construct some particular examples of irreducible subgroups in $PSL(n,\mathbb{C})$.

\begin{prop}\label{examp}
For any $n\geq 1$ and $B$  abelian group of order $n$, there exists a finite irreducible subgroup $H$ of $SL(n,\mathbb{C})$ of order $n^2$ such that $Z_n(H)$ is isomorphic  to $B\times B$.
\end{prop}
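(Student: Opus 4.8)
The plan is to build $H$ explicitly as a group of monomial matrices attached to the abelian group $B$, mimicking the classical construction of the finite Heisenberg group (the ``Weyl commutation relations''). Write $B\cong \mathbb{Z}/m_1\times\cdots\times\mathbb{Z}/m_k$ with $\prod m_i=n$ and index the standard basis of $V:=\mathbb{C}^n$ by $B$ itself, say $(e_b)_{b\in B}$. For each $b\in B$ define the shift operator $S_b$ by $S_b e_x=e_{x+b}$, and for each character $\chi\in B^*$ (recall $B^*\cong B$ as $B$ is finite abelian) define the diagonal operator $D_\chi$ by $D_\chi e_x=\chi(x)e_x$. One checks the commutation relation $S_b D_\chi = \chi(b)^{-1} D_\chi S_b$, so modulo scalars the $S_b$ and $D_\chi$ commute, and the group $\widetilde{H}$ they generate is a central extension of $B\times B^*$ by the group of roots of unity generated by the values $\chi(b)$, which all lie in $\langle\xi I_n\rangle$ since the exponent of $B$ divides $n$. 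A determinant computation (the determinant of each $S_b$ and $D_\chi$ is a root of unity) lets one rescale the generators by suitable scalars $\lambda$, exactly as in Example \ref{cardncarre} and Proposition \ref{3.diagonal}, to land inside $SL(n,\mathbb{C})$; call the resulting subgroup $H\leq SL(n,\mathbb{C})$. It has order $n\cdot|B\times B^*|/n = n^2$ after one accounts for the overlap of the scalar part.

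Next I would verify that $H$ is irreducible in $SL(n,\mathbb{C})$. This is the standard fact that the Heisenberg representation is irreducible: any $H$-stable subspace is stable under all the $D_\chi$, hence is spanned by a subset of the $e_b$ (simultaneous eigenvectors with distinct characters), and being stable under all shifts $S_b$ forces that subset to be all of $B$. Hence no proper nonzero subspace is $H$-stable, and in particular $H$ is not contained in any parabolic (stabilizer of a flag), so $H$ is irreducible.

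It then remains to compute $Z_n(H)$. By Proposition \ref{2p1} and Corollary \ref{3.ncardinal}, $Z_n(H)$ is abelian of order dividing $n^2$; since $H$ itself centralizes its own image in $PSL(n,\mathbb{C})$ (the commutator of any two elements of $H$ is a scalar), we get $\pi_n(H)\leq Z_n(H)$, and $|\pi_n(H)|=n^2/n\cdot(\text{scalar overlap})$. The cleanest route is to show $U_n(H)=\langle\xi I_n\rangle\cdot H$: an element $g\in SL(n,\mathbb{C})$ commutes with all of $H$ modulo scalars iff conjugation by $g$ is trivial on the image of $H$ in $PGL(n,\mathbb{C})$, but the image of $H$ is the full group $B\times B^*$ acting with trivial centralizer in $PGL$ (again a Heisenberg-type computation: the only matrices commuting with all $S_b$ up to scalars and all $D_\chi$ up to scalars are themselves of the form scalar times $S_bD_\chi$). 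Hence $Z_n(H)=\pi_n(H)\cong \widetilde H/\langle\text{roots of unity}\rangle$, which is $B\times B^*\cong B\times B$ since $B$ is finite abelian.

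The main obstacle I expect is the bookkeeping of scalars: the naive Heisenberg group generated by the $S_b$ and $D_\chi$ does not sit inside $SL(n,\mathbb{C})$, and the rescaling needed to force determinant $1$ must be done compatibly with the group law (so that the rescaled generators still generate a group of the right order $n^2$ and still have scalar commutators), exactly the subtlety already visible in the case distinction of Proposition \ref{3.diagonal} between lifts of order $d$ and order $2d$. Once that is handled, irreducibility and the centralizer computation are routine applications of the representation-theoretic machinery (Proposition \ref{3.car}, Corollary \ref{3.ncardinal}) established above, since $|Z_n(H)|=n^2$ forces $Z_n(H)$ to be full, and its isomorphism type as an abelian group of order $n^2$ with the prescribed type $(m_1,m_1,\dots,m_k,m_k)$ is pinned down by the construction.
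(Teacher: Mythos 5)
Your construction is correct, and it takes a genuinely different route from the paper's. The paper argues by induction on $n$: it writes $B=B_1\times\langle e\rangle$ with $e$ of maximal order $d$, takes by induction an irreducible $K\leq SL(n/d,\mathbb{C})$ with $Z_{n/d}(K)\cong B_1\times B_1$, and assembles $H$ from block-diagonal copies of $K$ together with a block shift $M$ and a block-scalar diagonal $u$ --- i.e.\ it runs your Heisenberg construction one cyclic factor at a time, and then identifies $Z_n(H)$ through an explicit determination of $U_n(H)$ via two auxiliary morphisms. You build the whole finite Heisenberg group over $B$ at once, indexing the basis of $\mathbb{C}^n$ by $B$ and using shifts $S_b$ and character operators $D_\chi$. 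Your route buys a one-line irreducibility proof (the Stone--von Neumann argument) and makes the symplectic form on $Z_n(H)\cong B\times B^*$ visible from the start, which is exactly what assertion 5 of Proposition \ref{full} needs; the paper's induction only needs the cyclic case at each step but pays for it with a more delicate computation of $U_n(H)$. Your closing observation is also the right shortcut for the centralizer: all commutators in $H$ are scalar, so $\pi_n(H)\leq Z_n(H)$, and $|\pi_n(H)|=n^2$ together with Corollary \ref{3.ncardinal} forces equality --- the explicit ``only monomial matrices commute with $H$ modulo scalars'' computation can be skipped entirely. The scalar bookkeeping you worry about is real but harmless: commutators are unchanged when generators are rescaled, and any scalar matrix of determinant $1$ lies in $\langle\xi I_n\rangle$ automatically, so the rescaled group is a central extension of $B\times B^*$ by a subgroup of $\langle\xi I_n\rangle$ with no further case analysis needed. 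One caveat applies equally to your write-up and to the paper's: any $H$ with $\pi_n(H)\cong B\times B^*$ must contain the commutator subgroup $\langle\xi^{n/m}I_n\rangle$ (with $m$ the exponent of $B$), hence has order $mn^2$ rather than $n^2$; the stated order is immaterial to every later use of the proposition, which only concerns $Z_n(H)$.
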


\begin{proof}
We prove it by induction on $n\geq 1$. If $n=1$, then the property is obviously true. Assume $n>1$. Let $B$ be an abelian group of order $n$ and $d$ be the exponent of $B$. Denote $B:=B_1\times \langle e\rangle$ where $e$ is of order $d$.  Let $K$ be an irreducible subgroup of $SL(n/d,\mathbb{C})$ of order $(n/d)^2$  such that $Z_{n/d}(K)$ is isomorphic to $B_1$ (by induction hypothesis). We define a subgroup $K_0$ of $SL(n,\mathbb{C})$ by blocks of size $n/d$ :

$$K_0:=\left\lbrace\begin{pmatrix} k&&\\&\ddots&\\&&k\end{pmatrix}\mid  k\in K\right\rbrace\text{.} $$

We also denote :

$$M:=\lambda \begin{pmatrix}0&&&I_{n/d}\\I_{n/d}&\ddots&\\&\ddots&\ddots&\\&&I_{n/d}&0\end{pmatrix}\text{ and } u:=\lambda \begin{pmatrix}I_{\frac{n}{d}}&&&\\&\xi^{\frac{n}{d}}I_{\frac{n}{d}}&&\\&&\ddots&\\&&&\xi^{\frac{n}{d}(d-1)}I_{\frac{n}{d}}\end{pmatrix}  $$

where $\lambda$ is defined as in proposition \ref{3.diagonal} (this implies that $\det(M)=\det(u)=1$). Finally, let $H$ be the subgroup of $SL(n,\mathbb{C})$ generated by $K_0$, $M$ , $u$ and $\xi I_n$.  

Since $u$ is scalar by blocks of size $n/d$ with distinct eigenvalues, it follows that :

$$Z_{SL(n,\mathbb{C})}(u)=\left\lbrace\begin{pmatrix}g_0&&\\&\ddots&\\&&g_{d-1}\end{pmatrix}  \left| \begin{array}{l}g_0,\dots,g_{d-1}\in GL(n/d,\mathbb{C}) \\ \det(g_0)\dots \det(g_{d-1})=1\end{array}\right.\right\rbrace \text{.} $$

The commutator $[M,u]$ is of order $d$ in $Z(SL(n,\mathbb{C}))$ which is the order of $\pi_n(u)$. As a result,  $U_n(H)\leq U_n(\langle u\rangle)=\langle Z_{SL(n,\mathbb{C})}(u), M\rangle $. Let $g_0,\dots, g_{d-1}$ be in $GL(n/d,\mathbb{C})$ and $b\geq 0$ such that :

$$x:=\begin{pmatrix}g_0&&\\&\ddots&\\&&g_{d-1}\end{pmatrix}M^b\in U_n(H) \text{.}  $$

Then $[M,x]$ is in $Z(SL(n,\mathbb{C}))$ and since $\pi_n(M)$ is of order $d$, it follows that $[M,x]$ is of order dividing $d$. Since $[M,u]$ is of order $d$, up to multiplying $x$ by  some power of $u$, we may assume that $[M,x]=I_n$. But this  implies (by a straightforward calculation) that $g_0=\cdots=g_{d-1}$. Hence we just showed that :

$$U_n(H)\leq \left\lbrace\begin{pmatrix}g&&\\&\ddots&\\&&g\end{pmatrix}u^lM^b \left|\begin{array}{l}g\in GL(n/d,\mathbb{C})\text{, } \det(g)^d=1\\b,l\geq 0\end{array}\right.\right\rbrace  \text{.}$$

$$\text{In the sequel, $D(g)$ will be the matrix } \begin{pmatrix}g&&\\&\ddots&\\&&g\end{pmatrix}\text{ where } g\in  GL(n/d,\mathbb{C})\text{.}$$

Let $g\in GL(n/d,\mathbb{C})$ with $\det(g)^d=1$ then $\det(g)$ is a $d$-th root of the unity. Up to multiplying $D(g)$ by a $n/d$-th root of $\det(g)$ (which is a $n$-th root of the unity, i.e. $\xi^s$ for some integer $s$), we may assume that $\det(g)=1$. Whence :

\begin{equation}
U_n(H)\leq \left\lbrace\xi^sD(g)u^lM^b \left|\begin{array}{l}g\in SL(n/d,\mathbb{C}) \\ s,b,l\geq 0\end{array}\right.\right\rbrace \text{.}
\label{decompH}
\end{equation}

Let us define an application  $\psi_0: \left|\begin{array}{rcl}
U_n(H)&\longrightarrow&\langle \xi^{n/d}\rangle\times \langle \xi^{n/d}\rangle\\
v&\longmapsto& ([v,M],[v,u])\end{array}\right.
$.

\bigskip

 The application $\psi_0$ is a group morphism (consequence of lemma \ref{2l1}), it is surjective since $\psi_0(u)$ and $\psi_0(M)$ generate $\langle \xi^{n/d}\rangle\times \langle \xi^{n/d}\rangle$ and it  factors through $Z_n(H)$. It follows that we have a surjective isomorphism $\psi$ from $Z_n(H)$ to $\mathbb{Z}/d\times \mathbb{Z}/d$.

\bigskip

From the inclusion \ref{decompH}, it follows that $Ker(\psi)\leq \left\lbrace\pi_n(D(g))\mid g\in SL(n,\mathbb{C})\right\rbrace $. In particular $\langle \pi_n(M),\pi_n(u)\rangle$ is of trivial intersection with $Ker(\psi)$. Since it surjects onto $\mathbb{Z}/d\times \mathbb{Z}/d$ through $\psi$, it follows that :

\begin{equation}
Z_n(H)\text{ is isomorphic to }Ker(\psi)\times \mathbb{Z}/d\times \mathbb{Z}/d\text{.} 
\label{decompZn}
\end{equation}

Let   $k\in K$ and  $k_0:=D(k)\in K_0\leq H$. For  $g\in SL(n/d,\mathbb{C})$ : $\left[k_0, D(g) \right]=D([k,g])$. As a result, $D(g)\in U_n(H)$ if and only if $g\in U_{n/d}(K)$. In particular, this leads to a well defined group morphism modulo $\langle \xi I_n\rangle$ :

$$\varphi : \left|\begin{array}{rcl}
Ker(\psi)&\longrightarrow&  Z_{n/d}(K)\\
\pi_n(D(g))&\longmapsto& \pi_{n/d}(g)\end{array}\right.\text{.} 
$$

Because $\varphi$ is  injective, $Ker(\psi)$ is isomorphic to $Z_{n/d}(K)$ which has been chosen to be isomorphic to $B_1\times B_1$. As a result, with the decomposition \ref{decompZn}, $Z_n(H)$ is isomorphic to $B_1\times B_1\times \mathbb{Z}/d\times \mathbb{Z}/d$ which is isomorphic to $B\times B$. Remark that $|H|=|K|d^2=n^2$. In particular, $H$ is finite so it is completely reducible and since its centralizer is of finite order, it is irreducible. So we can conclude by induction.\end{proof}

In the proof, we did not only find an irreducible subgroup $\overline{H}$ of $PSL(n,\mathbb{C})$ whose centralizer is isomorphic to $B\times B$ but, as is, the induction also shows that $\overline{H}$ is its own centralizer. We did not emphasize it in the proposition because, in the next proposition, we shall see that full centralizers (i.e. $Z_n(H)$ of order $n^2$ where $H$ is an irreducible subgroup of $SL(n,\mathbb{C})$)   have many properties including this one. 

\bigskip

We recall (see \cite{T-W}) that an alternate module $(A,\phi)$ is said to be \textit{symplectic} if its radical is trivial. Furthermore if $(A,\phi)$ is symplectic then $(A,\phi)$ is isometric to $B\times B^*$ (corollary 7.4 in loc. cit.) where $B$ is abelian of order $n$ and $B\times B^*$ is endowed with its canonical symplectic form.

\begin{prop}\label{full}

Let $n\geq 1$ and $H$ be an irreducible subgroup of $SL(n,\mathbb{C})$ containing the center of $SL(n,\mathbb{C})$. Then  the following assertions are equivalent :

\begin{displaymath}
\begin{array}{ll}
1&|Z_n(H)|=n^2\\
2&Z_n(H)\text{ is irreducible.}\\
3& \pi_n(H)\text{ is abelian.}\\
4& \pi_n(H)\text{ is its own centralizer.}\\
5& (Z_n(H),\phi_H)\text{ is isometric to $B\times B^*$ where $B$ is abelian of order $n$}
 \end{array}
\end{displaymath}

Furthermore, if $H_1$ and $H_2$ are two irreducible subgroups verifying one of these assertions and  $Z_n(H_1)$ is abstractly isomorphic to $Z_n(H_2)$ then $H_1$ and $H_2$ are conjugate.

\end{prop}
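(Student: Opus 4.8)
<br>

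The proposal is to prove the chain of equivalences cyclically, then handle the rigidity statement separately.

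First I would establish $1 \Leftrightarrow 2$: this is already corollary \ref{3.ncardinal}, so nothing to do. For $2 \Rightarrow 3$: if $Z_n(H)$ is irreducible, then $U_n(H) = \pi_n^{-1}(Z_n(H))$ is irreducible, hence by Schur its center is exactly $\langle \xi I_n\rangle$. But $H \subseteq U_n(H)$ commutes with $H$ modulo the center (each $h \in H$ satisfies $[h, h'] \in \langle \xi I_n \rangle$ only for $h' \in H$... wait). Let me instead argue $3 \Rightarrow 1$ directly: if $\pi_n(H)$ is abelian, then $H \subseteq U_n(H)$ and in fact $\pi_n(H) \subseteq Z_n(H)$; since $H$ is irreducible, $\pi_n(H)$ is an irreducible abelian subgroup, so $Z_{PSL}(\pi_n(H)) \supseteq \pi_n(H)$ is finite (by Sikora's criterion, irreducibility of $\pi_n(H)$ forces $[Z_{PSL}(\pi_n(H)):1] < \infty$), and the standard counting via proposition \ref{3.car} applied to the abelian irreducible group $\pi_n(H)$ gives $|Z_n(H)| = n^2$. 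More robustly: $\pi_n(H)$ abelian $\Rightarrow \pi_n(H) \subseteq Z_n(H) \subseteq Z_{PSL}(\pi_n(H))$; and one shows $Z_n(H) = Z_{PSL}(\pi_n(H))$ since $H \subseteq U_n(H)$ means every element of $Z_n(H)$ centralizes $\pi_n(H)$, while conversely anything centralizing $\pi_n(H)$ lies in $Z_n(H)$ because... here I need $H \subseteq U_n(\text{itself})$ which holds as $\pi_n(H)$ abelian. Then apply corollary \ref{3.stabparsgrp}-type reasoning or directly the $\|\chi\|^2$ computation.

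For $3 \Leftrightarrow 4$: if $\pi_n(H)$ is abelian then $\pi_n(H) \subseteq Z_n(H) = Z_{PSL}(\pi_n(H))$; equality $\pi_n(H) = Z_n(H)$ follows by comparing orders — $|\pi_n(H)| = |H|/n$ and $|Z_n(H)| = n^2$, but also one can invoke proposition \ref{examp} to see the extremal case, or better: $Z_{PSL}(Z_{PSL}(\pi_n(H))) = \pi_n(H)$ by corollary \ref{3.stabparsgrp} applied with $A = \pi_n(H) \le Z_n(H)$, and since $Z_n(H)$ is abelian it centralizes itself, forcing $Z_n(H) \subseteq Z_{PSL}(Z_n(H)) = Z_{PSL}(Z_{PSL}(\pi_n(H))) = \pi_n(H)$, hence $\pi_n(H) = Z_n(H)$, which is assertion 4. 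The converse $4 \Rightarrow 3$ is immediate since a group equal to its own centralizer is abelian. For $1 \Leftrightarrow 5$: the associated alternate module $(Z_n(H), \phi_H)$ has Lagrangians of order dividing $n$ by proposition \ref{cardlag}, and a Lagrangian has order $\sqrt{|Z_n(H)||K_{\phi_H}|}$; if $|Z_n(H)| = n^2$ this forces the radical $K_{\phi_H}$ to be trivial (Lagrangian order $\ge \sqrt{|Z_n(H)|} = n$, combined with dividing $n$, gives order exactly $n$ and trivial radical), so $(Z_n(H),\phi_H)$ is symplectic, hence isometric to $B \times B^*$ with $|B| = n$ by corollary 7.4 of \cite{T-W}; conversely if it is isometric to $B \times B^*$ then $|Z_n(H)| = |B|^2 = n^2$.

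For the final rigidity claim: suppose $H_1, H_2$ are irreducible, both full (say assertion 5 holds), and $Z_n(H_1) \cong Z_n(H_2)$ as abstract groups. By proposition \ref{3.assoc} and theorem \ref{classif1}, it suffices to produce an \emph{isometry} of the associated symplectic modules, or to apply corollary \ref{conjugclass} directly. The key point is that a symplectic module $B \times B^*$ is determined up to isometry by the isomorphism type of the underlying abelian group: since $Z_n(H_1) \cong B_1 \times B_1^*$ and $Z_n(H_2) \cong B_2 \times B_2^*$ with $Z_n(H_1) \cong Z_n(H_2)$, one gets $B_1 \times B_1^* \cong B_2 \times B_2^*$ as groups, and by uniqueness of the type decomposition $B_1 \cong B_2$, so the symplectic modules are isometric. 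Then theorem \ref{classif1} gives that $Z_n(H_1)$ and $Z_n(H_2)$ are conjugate, and using the full hypothesis (assertion 4, $\pi_n(H_i) = Z_n(H_i)$) one upgrades this to conjugacy of $H_1$ and $H_2$ themselves — indeed if $g$ conjugates $Z_n(H_1)$ onto $Z_n(H_2)$ then it conjugates $\pi_n(H_1)$ onto $\pi_n(H_2)$, and since $H_i = \pi_n^{-1}(\pi_n(H_i))$ up to the central subgroup, $g$ conjugates $H_1$ onto $H_2$ in $SL(n,\mathbb{C})$ (the lift $g$ is defined up to center, which is contained in both).

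The main obstacle I expect is the careful bookkeeping in $3 \Rightarrow 1$ and $3 \Leftrightarrow 4$ — establishing that $Z_n(H)$ coincides with $Z_{PSL(n,\mathbb{C})}(\pi_n(H))$ when $\pi_n(H)$ is abelian, and pinning down orders so that the double-centralizer identity from corollary \ref{3.stabparsgrp} forces $\pi_n(H) = Z_n(H)$ rather than merely $\pi_n(H) \subsetneq Z_n(H)$. Once the identification $Z_n(H) = Z_{PSL}(\pi_n(H))$ is in hand, everything reduces to the representation-theoretic norm computation of proposition \ref{3.car} and the structure theory of symplectic modules already cited.
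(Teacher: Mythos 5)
Your proof has a genuine gap: the five assertions are never linked into a closed cycle. You establish $1\Leftrightarrow 2$, $1\Leftrightarrow 5$, $3\Leftrightarrow 4$, and $3\Rightarrow 1$, but you abandon the implication $2\Rightarrow 3$ mid-sentence (``\dots wait'') and never replace it with any implication from $\{1,2,5\}$ back to $\{3,4\}$. As written, assertions $1$, $2$, $5$ could a priori hold while $3$ and $4$ fail, so the equivalence is not proved. The missing step is short in the paper: if $Z_n(H)$ is irreducible, then its centralizer $Z_{PSL(n,\mathbb{C})}(Z_n(H))$ is abelian by proposition \ref{2p1} (applied to the irreducible group $\pi_n^{-1}(Z_n(H))=U_n(H)$); since every element of $Z_n(H)=Z_{PSL(n,\mathbb{C})}(\pi_n(H))$ commutes with every element of $\pi_n(H)$, one has $\pi_n(H)\subseteq Z_{PSL(n,\mathbb{C})}(Z_n(H))$, hence $\pi_n(H)$ is abelian. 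Your aborted attempt was heading toward $H$ rather than toward the centralizer of $Z_n(H)$, which is why it stalled. Separately, your direct argument for $3\Rightarrow 1$ is too vague to stand on its own (``the standard counting \dots gives $|Z_n(H)|=n^2$'' is asserted, not derived); the paper makes it precise by noting $H\leq U_n(H)$ is finite, that the inclusion of $H$ restricts the standard character of proposition \ref{3.car}, so $\|\chi\|^2=n^3/|H|=1$, whence $|H|=n^3$ and $|\pi_n(H)|=n^2$.

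That said, the parts you do prove are essentially correct, and your route for $3\Rightarrow 4$ is genuinely different from the paper's: you apply the double-centralizer identity of corollary \ref{3.stabparsgrp} to $A=\pi_n(H)\leq Z_n(H)$ to get $Z_{PSL(n,\mathbb{C})}(Z_n(H))=\pi_n(H)$, then use abelianity of $Z_n(H)$ to force $Z_n(H)\subseteq\pi_n(H)$, hence equality. This avoids the character-norm computation the paper uses for $3\Rightarrow 4$ and, combined with the trivial $4\Rightarrow 2$, would also repair your $3\Rightarrow 1$. Your treatments of $1\Leftrightarrow 5$ and of the rigidity statement match the paper's. So the fix needed is localized: supply $2\Rightarrow 3$ as above and the proof closes.
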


\begin{proof}
The equivalence $1\Leftrightarrow 2$ is exactly the second assertion in corollary \ref{3.ncardinal}.

\bigskip

($2\Rightarrow 3$). Say $Z_n(H)$ is irreducible, then its centralizer is abelian by proposition \ref{2p1}, therefore, $\pi_n(H)$ which commutes with $Z_n(H)$ is abelian as well.

($3\Rightarrow 4$). Let $\rho$ be the inclusion of $H$  into $SL(n,\mathbb{C})$ and $\chi$ its character. Since $H$ is included in  $U_n(H)$, $\rho$ is the standard representation $\iota_H$ of $U_n(H)$ restricted to $H$ whose character has been computed in proposition \ref{3.car}. It follows that $\|\chi\|^2=\frac{n^3}{|H|} $. On the other hand, since $H$ is irreducible, $\rho$ is irreducible and then $\|\chi^2\|=1$. Whence $|H|=n^3$ and $|\pi_n(H)|=n^2$. Since $|Z_n(H)|\leq n^2$ and $\pi_n(H)$ is included in $Z_n(H)$, it follows that $\pi_n(H)=Z_n(H)$ and $\pi_n(H)$ is its own centralizer.

($4\Rightarrow 2$). If $\pi_n(H)$ is its own centralizer then $Z_n(H)=\pi_n(H)$ is irreducible.

\bigskip

($5\Rightarrow 1$) is obvious. ($1\Rightarrow 5$). The alternate module $(Z_n(H),\phi_H)$ is of order $n^2$. By proposition \ref{cardlag}, we  have that $n_{Z_n(H),\phi_H}:=\sqrt{|Z_n(H)||K_{\phi_H}|}$ divides $n$. It follows that $\sqrt{|K_{\phi_H}|}$ divides $1$. Whence $K_{\phi_H}$ is trivial and $(Z_n(H),\phi_H)$ is a symplectic module of order $n^2$. By corollary 7.4  in \cite{T-W}, there exists an abelian group $B$ of order $n$ such that $(A,\phi)$ is isometric to $B\times B^*$ with its canonical symplectic form.

\bigskip

Let $H_1$ and $H_2$ be two irreducible subgroups in $SL(n,\mathbb{C})$ containing $Z(SL(n,\mathbb{C}))$ such that $Z_n(H_1)$ and $Z_n(H_2)$ are isomorphic. Since they verify the assertion $5$, $(Z_n(H_1),\phi_{H_1})$ and $(Z_n(H_2),\phi_{H_2})$ are both symplectic modules with isomorphic underlying groups. Since the isometry class of a symplectic module is simply given by the isomorphism class of its underlying group, it follows that $(Z_n(H_1),\phi_{H_1})$ and $(Z_n(H_2),\phi_{H_2})$ are isometric. As a result, applying theorem \ref{classif1}, the subgroups  $Z_n(H_1)$ and $Z_n(H_2)$ are conjugate in $PSL(n,\mathbb{C})$. Since for $i=1,2$, $\pi_n(H_i)=Z_n(H_i)$ and $H_i=\pi_n^{-1}(\pi_n(H_i))$ it follows that $H_1$ is conjugate to $H_2$. \end{proof}

We end this subsection with a characterization theorem :
 
\begin{thm}\label{classif2}

Let $n\geq 1$ and $(A,\phi)$ be an alternate module, then the following assertions are equivalent :

1.There exists an irreducible subgroup $H$ in $SL(n,\mathbb{C})$ such that $(Z_n(H),\phi_H)$ is isometric to $(A,\phi)$.

2. The order of Lagrangians in $(A,\phi)$ divides $n$.

3. There exists an abelian group $B$ of order $n$ such that $(A,\phi)$ is isometrically embedded in $B\times B^*$.

In particular,  full centralizers of irreducible subgroups in $SL(n,\mathbb{C})$ are the maximal elements among the centralizers of irreducible subgroups in $SL(n,\mathbb{C})$.

\end{thm}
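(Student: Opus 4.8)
The plan is to establish the cycle $1\Rightarrow 2\Rightarrow 3\Rightarrow 1$ and then read off the final ``in particular'' from Corollary~\ref{3.ncardinal}. The implication $1\Rightarrow 2$ requires no work: it is exactly Proposition~\ref{cardlag}, since the common order of Lagrangians of an alternate module is an isometry invariant.

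For $2\Rightarrow 3$ I would argue entirely inside the category of alternate modules, using the structure theory recalled from \cite{T-W} and \cite{Gue2}. Fix a Lagrangian $L$ of $(A,\phi)$; by hypothesis its order $m=n_{A,\phi}=\sqrt{|A||K_\phi|}$ divides $n$. The key module-theoretic input is that $(A,\phi)$ embeds isometrically into a symplectic module of order $m^2$ (one builds a symplectic overmodule in which $L$ is still a Lagrangian); being symplectic of order $m^2$, this overmodule is isometric to $M\times M^*$ with its canonical symplectic form, where $|M|=m$. Since $m\mid n$, set $B:=M\times\mathbb{Z}/(n/m)$, so that $|B|=n$ and $B\times B^*$ splits orthogonally as $(M\times M^*)\perp\bigl(\mathbb{Z}/(n/m)\times(\mathbb{Z}/(n/m))^*\bigr)$; composing the two embeddings yields $(A,\phi)\hookrightarrow B\times B^*$. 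This embedding step — realizing an alternate module inside the symplectic module of minimal possible order — is the point I expect to be the most delicate, essentially because a Lagrangian need not be a direct summand; I would hope to quote it directly from \cite{Gue2}.

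For $3\Rightarrow 1$, suppose $(A,\phi)$ embeds isometrically into $B\times B^*$ with $B$ abelian of order $n$. Proposition~\ref{examp} furnishes an irreducible subgroup $H\leq SL(n,\mathbb{C})$ of order $n^2$ with $Z_n(H)\cong B\times B$ as abstract groups. By Proposition~\ref{full}, $(Z_n(H),\phi_H)$ is then symplectic, hence isometric to $C\times C^*$ for some abelian $C$ of order $n$; comparing underlying groups gives $C\times C\cong B\times B$, and cancellation of finite abelian groups forces $C\cong B$, so $(Z_n(H),\phi_H)$ is isometric to $B\times B^*$. Identify $Z_n(H)$ with $B\times B^*$ through this isometry and let $A'\leq Z_n(H)$ be the image of $A$. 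Since $\pi_n(H)=Z_n(H)$ is abelian (Proposition~\ref{full}) and contains $A'$, it centralizes $A'$, so $\overline{H'}:=Z_{PSL(n,\mathbb{C})}(A')$ contains the irreducible group $\pi_n(H)$ and is therefore irreducible; by Corollary~\ref{3.stabparsgrp}, $Z_n(H')=A'$ where $H':=\pi_n^{-1}(\overline{H'})$. Finally, the form $\phi_{H'}$ on $Z_n(H')=A'$ is computed from commutators of lifts taken in $SL(n,\mathbb{C})$, and these lifts may be chosen inside $\pi_n^{-1}(A')\subseteq U_n(H)$, so $\phi_{H'}=\phi_H|_{A'\times A'}$; hence $(Z_n(H'),\phi_{H'})$ is exactly $(A',\phi_H|_{A'\times A'})$, which is isometric to $(A,\phi)$.

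For the last assertion, a module $B\times B^*$ with $|B|=n$ is maximal among $n$-subsymplectic modules: if it embeds isometrically into an $n$-subsymplectic module $(A,\phi)$, then $|A|\geq n^2$, hence $|A|=n^2$ by Corollary~\ref{3.ncardinal}, so the embedding is an isometry. Conversely, a maximal $n$-subsymplectic module embeds isometrically into some $B\times B^*$ and thus coincides with it. By Proposition~\ref{full}, the isometry classes of the modules $B\times B^*$ are precisely those of the full centralizers, so the maximal elements among centralizers of irreducible subgroups of $SL(n,\mathbb{C})$ are exactly the full ones.
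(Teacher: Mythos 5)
Your proposal is correct and follows essentially the same route as the paper: Proposition~\ref{cardlag} for $1\Rightarrow 2$, the subsymplectic embedding theorem of \cite{Gue2} plus padding by $\mathbb{Z}/(n/m)$ for $2\Rightarrow 3$, and Propositions~\ref{examp}, \ref{full} together with Corollary~\ref{3.stabparsgrp} for $3\Rightarrow 1$ and the maximality claim. Your extra cancellation step identifying $C$ with $B$ is a small point the paper leaves implicit, but it is the same argument.
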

\begin{proof}
The implication $1\Rightarrow 2$ is exactly the statement of proposition \ref{cardlag}.

\bigskip

Let us show that $2\Rightarrow 3$. Let $(A,\phi)$ be an alternate module and $d=n_{A,\phi}$ be the order of one of its Lagrangians which divides $n$ by assumption. Using the theorem proven in \cite{Gue2} (alternate modules are subsymplectic), there exists an abelian group $B_0$ of order $d$ such that $(A,\phi)$ is isometrically embedded in $B_0\times B_0^*$. Defining the abelian group $B:=B_0\times \mathbb{Z}/(n/d)$,  $B_0\times B_0^*$ is isometrically embedded in $B\times B^*$, so that  $(A,\phi)$ is also isometrically embedded in $B\times B^*$. Since $|B|=|B_0|n/d=n$, we have assertion $3$. 

\bigskip

Let us show that $3\Rightarrow 1$. Assume that $(A,\phi)$ is isometrically embedded in $B\times B^*$.  Let $H$ be an irreducible subgroup such that $Z_n(H)$ is isomorphic (as a group) to $B\times B$ (by proposition \ref{examp}). Since $|Z_n(H)|=|B|^2=n^2$, it follows, by proposition \ref{full}, that $(Z_n(H),\phi_H)$ is isometric to $B\times B^*$. We denote $f$ a group isomorphism from $Z_n(H)$ to $B\times B^*$ which is an isometry. 

\bigskip

Denote $(A_0,\phi_0)$ the submodule of $(Z_n(H),\phi_H)$ which is sent to $(A,\phi)$ by $f$. By corollary \ref{3.stabparsgrp}, $A_0$ is the $n$-centralizer of an irreducible subgroup $K$ of $SL(n,\mathbb{C})$, that is to say $Z_n(K)=A_0$. Furthermore, from the definition of $(A_0,\phi_K)$ and $(Z_n(H),\phi_H)$,  $\phi_{H|A_0\times A_0}=\phi_K$. Since $\phi_0:=\phi_{H|A_0\times A_0}$, the alternate module $(A_0,\phi_0)$ is  of the form $(Z_n(K),\phi_K)$. Since $(A,\phi)$ is  isometric to $(A_0,\phi_0)$ we have the assertion $1$. 

\bigskip

By corollary \ref{3.ncardinal}, full centralizers reach the maximal order of centralizers of irreducible subgroups in $SL(n,\mathbb{C})$. Whence, full centralizers are maximal among the centralizers of irreducible subgroups in $SL(n,\mathbb{C})$. 

\bigskip

Conversly, if $H$ is an irreducible subgroup in $SL(n,\mathbb{C})$, then $(Z_n(H),\phi_H)$ is isometrically embedded in $B\times B^*$ (since $1\Leftrightarrow 3$) with $B$ abelian group of order $n$. Using propositions \ref{examp} and \ref{full}, there exists an irreducible subgroup $S$  of $PSL(n,\mathbb{C})$ such that $S$ is isomorphic to $B\times B^*$. If $A$ is the image of $Z_n(H)$ into $B\times B^*$ then $A=Z_n(K)$ by corollary  \ref{3.stabparsgrp}. It follows that $(Z_n(H),\phi_H)$ is isometric to $(Z_n(K),\phi_K)$. Hence $Z_n(H)$ and $Z_n(K)$ are conjugate (by theorem \ref{classif1}), since $Z_n(K)$ is included in $S=Z_{PSL(n,\mathbb{C})}(S)$ which is a full centralizer, we have that any centralizer of irreducible subgroup in $PSL(n,\mathbb{C})$ is contained in a full centralizer. This implies that all maximal elements among  the centralizers of irreducible subgroups in $SL(n,\mathbb{C})$ must be full centralizers. \end{proof}

This theorem will  have some consequences that we are going to sum up in the next subsection. In the sequel, if $(A,\phi)$ is an alternate module, we will say that $(A,\phi)$ is $n$-\textit{subsymplectic} if there exists an abelian group $B$ of order $n$ such that $(A,\phi)$ is isometrically embedded in $B\times B^*$.

\subsection{Consequences of the classification}\label{sec3sub2}

Something that may be the most obvious consequence is the following :

\begin{cor}\label{peudemod}

Let $n\geq 1$, then the number of conjugacy classes of centralizers of irreducible subgroups in $PSL(n,\mathbb{C})$ is finite.

\end{cor}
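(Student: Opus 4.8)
The plan is to combine Theorem \ref{classif1} with the characterization in Theorem \ref{classif2} to reduce the count to a purely finite combinatorial problem. By Theorem \ref{classif1}, two centralizers of irreducible subgroups in $PSL(n,\mathbb{C})$ are conjugate if and only if their associated alternate modules are isometric, so the number of conjugacy classes is at most the number of isometry classes of alternate modules that arise this way. By Theorem \ref{classif2}, such alternate modules are precisely the $n$-subsymplectic ones, i.e. those isometrically embeddable in $B\times B^*$ for some abelian group $B$ of order $n$. Hence it suffices to prove that, up to isometry, there are only finitely many alternate submodules of $B\times B^*$ as $B$ ranges over abelian groups of order $n$.

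First I would note that there are only finitely many isomorphism classes of abelian groups $B$ of order $n$ (they are indexed by the partitions of the exponents in the prime factorization of $n$), so it is enough to fix one such $B$ and bound the number of isometry classes of submodules of the fixed symplectic module $B\times B^*$. For a fixed finite group $M:=B\times B^*$, the collection of subgroups of $M$ is finite, since $M$ is finite; each subgroup $A\leq M$ inherits the restricted alternate form $\phi|_{A\times A}$, giving finitely many alternate modules $(A,\phi|_{A\times A})$, and passing to isometry classes only decreases this count. Therefore the set of $n$-subsymplectic modules up to isometry is finite, and by the two theorems above the number of conjugacy classes of centralizers of irreducible subgroups in $PSL(n,\mathbb{C})$ is finite as well.

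There is essentially no obstacle here: the statement is a formal consequence of the already-established classification (Theorems \ref{classif1} and \ref{classif2}) together with the elementary fact that a finite group has finitely many subgroups and $n$ has finitely many abelian groups of that order. The only point deserving a word of care is that the association $H\mapsto(Z_n(H),\phi_H)$ is well defined up to isometry on conjugacy classes — but this was already recorded after Proposition \ref{3.assoc} — and that every conjugacy class of centralizers genuinely produces an $n$-subsymplectic module, which is the implication $1\Rightarrow 3$ of Theorem \ref{classif2}. Assembling these gives the corollary immediately.
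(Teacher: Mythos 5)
Your proposal is correct and follows essentially the same route as the paper: reduce via Theorems \ref{classif1} and \ref{classif2} to counting isometry classes of $n$-subsymplectic modules, then observe that there are finitely many abelian groups $B$ of order $n$ and that each finite symplectic module $B\times B^*$ has only finitely many submodules. No gaps.
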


\begin{proof}
Using theorem \ref{classif1}, we can equivalently bound the number of isometry classes of  associated alternate modules. Using theorem \ref{classif2}, it suffices to show that  there are only a finite number of isometry classes of $n$-subsymplectic modules.

\bigskip

When we fix $B$, the module $B\times B^*$ has only a finite number of submodules, since there exist only a finite number of abelian groups $B$ of order $n$, there are only a finite number of symplectic modules of order $n^2$, up to isometry, and hence, a finite number of $n$-subsymplectic modules up to isometry. \end{proof}

In the next corollary, we characterize the isomorphism classes of centralizers :

\begin{cor}\label{ncompatible}
Let $n\geq 1$ and $A$ be an abelian group. Then there exists an irreducible subgroup $H$ of $SL(n,\mathbb{C})$ such that $Z_n(H)$ is isomorphic to $A$ if and only if there exists an abelian group $B$ of order $n$ such that $A$ is isomorphic to a subgroup of $B\times B$. 

In particular, for any abelian group $A$, there exists an integer $n\geq 1$ and an irreducible subgroup $H$ of $SL(n,\mathbb{C})$ such that $Z_n(H)$ is isomorphic to $A$.

\end{cor}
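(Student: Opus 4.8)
The plan is to read off both implications directly from Theorem \ref{classif2}, after recording two elementary facts. The first is that every finite abelian group $B$ is (non-canonically) isomorphic to its dual $B^{*}=\mathrm{Hom}(B,\mathbb{Q}/\mathbb{Z})$, so that the underlying group of the symplectic module $B\times B^{*}$ is simply $B\times B$. The second is that whenever $(A,\phi)$ is a subgroup of an alternate module equipped with the restricted form, the inclusion is tautologically an isometric embedding of alternate modules; this is what will let us pass freely between the group-theoretic and the module-theoretic notions of ``embedding''.

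First I would treat the direct implication. Assume $H$ is an irreducible subgroup of $SL(n,\mathbb{C})$ with $Z_n(H)\cong A$. By the equivalence $1\Leftrightarrow 3$ of Theorem \ref{classif2}, the alternate module $(Z_n(H),\phi_H)$ is isometrically embedded in $B\times B^{*}$ for some abelian group $B$ of order $n$; forgetting the forms, $Z_n(H)$ embeds as a subgroup of $B\times B^{*}\cong B\times B$, and since $A\cong Z_n(H)$ this gives the required subgroup of $B\times B$.

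Next I would do the converse. Suppose $A$ is isomorphic to a subgroup of $B\times B$ with $|B|=n$. Identifying $B\times B$ with the underlying group of the canonical symplectic module $B\times B^{*}$ and transporting along the chosen isomorphism, we may view $A$ as a subgroup of $B\times B^{*}$; equipping $A$ with the restriction $\phi$ of the canonical symplectic form makes $(A,\phi)$ an alternate module isometrically embedded in $B\times B^{*}$, i.e.\ an $n$-subsymplectic module satisfying assertion $3$ of Theorem \ref{classif2}. The implication $3\Rightarrow 1$ then yields an irreducible subgroup $H$ of $SL(n,\mathbb{C})$ with $(Z_n(H),\phi_H)$ isometric to $(A,\phi)$; in particular $Z_n(H)\cong A$ as abstract groups. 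The ``in particular'' claim is then immediate: for a finite abelian group $A$ take $n:=|A|$ and $B:=A$, so that $A$ embeds into $B\times B=A\times A$ (say as the first factor), and the equivalence just established produces an irreducible $H\leq SL(|A|,\mathbb{C})$ with $Z_{|A|}(H)\cong A$.

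I do not expect any genuine obstacle here: the corollary is essentially the shadow on underlying groups of Theorem \ref{classif2}. The one point deserving a moment's attention is precisely the bookkeeping described above, namely that a bare group embedding $A\hookrightarrow B\times B$ can always be promoted to an isometric embedding into $B\times B^{*}$ — which works because we are free to define the form on $A$ by restriction — and, conversely, that an isometric embedding of $(Z_n(H),\phi_H)$ forgets to a group embedding. Once this is said, everything reduces to quoting Theorem \ref{classif2} together with the standard isomorphism $B^{*}\cong B$ for finite abelian $B$.
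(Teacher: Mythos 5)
Your proof is correct and takes essentially the same route as the paper: the forward direction is identical (apply $1\Rightarrow 3$ of Theorem \ref{classif2} and forget the forms), and for the converse the paper directly invokes Proposition \ref{examp} together with Corollary \ref{3.stabparsgrp}, which are exactly the ingredients of the implication $3\Rightarrow 1$ that you quote after equipping $A$ with the restricted form. The bookkeeping point you flag --- that a bare group embedding $A\hookrightarrow B\times B$ upgrades to an isometric embedding into $B\times B^{*}$ by restricting the canonical form --- is the right observation and is implicitly what the paper does as well.
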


\begin{proof}
If $A$ is isomorphic to $Z_n(H)$ where $H$ is an irreducible subgroup of $SL(n,\mathbb{C})$ then, by theorem \ref{classif2}, there exists an abelian group $B$ of order $n$ such that $(Z_n(H),\phi_H)$ is isometrically embedded in $B\times B^*$. In particular $Z_n(H)$ is isomorphic to a subgroup of $B\times B^*$ which is (non-canonically) isomorphic to $B\times B$. Hence $A$ is isomorphic to a subgroup of $B\times B$. 

\bigskip
Let $B$ be  an abelian group $B$ of order $n$ such that $A$ is isomorphic to a subgroup of $B\times B$. By proposition \ref{examp}, there exists  an irreducible subgroup $H$ of $SL(n,\mathbb{C})$ such that $Z_n(H)=B\times B$. Let $A_0$ be the image of $A\leq B\times B$ in $Z_n(H)$, by corollary \ref{3.stabparsgrp}, $A$ is the $n$-centralizer of an irreducible subgroup of $SL(n,\mathbb{C})$.

\bigskip

Remark that $A$ is always included in $A\times A$. Hence, if $n:=|A|$, applying what we have already done in this corollary, there exists an irreducible subgroup $H$ of $SL(n,\mathbb{C})$ such that $A$ is isomorphic to $Z_n(H)$.  \end{proof}

If $p$ is a prime number, we have recalled in the introduction  that conjugacy classes of centralizers of irreducible subgroups in $PSL(p,\mathbb{C})$  are classified by their isomorphism classes. We would like to know exactly when this   convenient property holds. We recall that an integer $n\geq 2$ is \textit{squarefree} if it cannot be divided by a non-trivial square.

\begin{lemme}\label{modsqf}

Let $n$ be a squarefree integer and $(A,\phi)$  be   an alternate module  of rank $2$, of exponent dividing $n$ such that $n_{A,\phi}$ divides $n$. Let $A$ be isomorphic to $\langle b\rangle\times \langle a\rangle$ with $e$ (the order of $b$) dividing $d$  (the order of $a$) dividing $n$. Then :

$$(A,\phi)\text{ is isometric to } \left(\mathbb{Z}/e\times\mathbb{Z}/d,\begin{pmatrix}0&-1/e\\1/e&0\end{pmatrix}\right)\text{.} $$

\end{lemme}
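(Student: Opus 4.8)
The plan is to reduce the whole statement to a single divisibility fact about the number $\phi(b,a)$. Since $\phi$ is alternate it is anti-symmetric, so the form is entirely determined by the value $\phi(b,a)$; and because $eb=0=da$ with $e\mid d$, that value has order dividing $\gcd(e,d)=e$, so I would write $\phi(b,a)=k/e$ for an integer $k$ well defined modulo $e$. The module displayed in the conclusion is exactly the one with $\phi(b,a)=-1/e$, so the lemma is equivalent to the assertion that $k$ is invertible modulo $e$, i.e. $\gcd(k,e)=1$. Granting this, the isometry is immediate: choosing $k'$ with $kk'\equiv-1\pmod e$, the map that is multiplication by $k'$ on $\langle b\rangle$ and the identity on $\langle a\rangle$ is an automorphism of $A$ (because $k'$ is a unit modulo $e$) and it sends $\phi$ to the standard form, since $\phi(k'b,a)=k'\phi(b,a)=k'k/e=-1/e$; this is a one-line check on generators.

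The heart of the argument is therefore to prove $\gcd(k,e)=1$, and for this I would compute the radical $K_\phi$. Writing $g:=\gcd(k,e)$ and testing a general element $ib+ja$ (with $0\le i<e$, $0\le j<d$) against the generators $b$ and $a$, one finds that $ib+ja\in K_\phi$ exactly when $ik\equiv jk\equiv 0\pmod e$, i.e. when $(e/g)\mid i$ and $(e/g)\mid j$. Counting the admissible indices — there are $g$ choices for $i$ and $dg/e$ choices for $j$ — gives $|K_\phi|=dg^2/e$. Since $|A|=ed$, the common order of the Lagrangians of $(A,\phi)$ is $n_{A,\phi}=\sqrt{|A|\,|K_\phi|}=\sqrt{ed\cdot dg^2/e}=dg$.

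Now I would bring in the squarefree hypothesis. By assumption $d\mid n$ and $n_{A,\phi}=dg$ divides $n$. If a prime $p$ divided $g$, then, since $g\mid e\mid d$, the prime $p$ would divide both $d$ and $g$, hence $p^2\mid dg\mid n$, contradicting the squarefreeness of $n$. Therefore $g=1$, i.e. $\gcd(k,e)=1$, and the explicit isometry from the first paragraph concludes the proof.

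The only genuinely calculational step — and it is mild — is the radical computation: one must get the two divisibility conditions and the two counts right so that $|A|\,|K_\phi|$ collapses exactly to $(dg)^2$. Everything else (the deduction $g=1$ from squarefreeness, and writing the isometry down) is formal. The point I would be careful about when writing up is the sign in the displayed matrix, which forces the choice $kk'\equiv-1$ rather than $kk'\equiv1$ modulo $e$; this is harmless since $-1$ is a unit modulo $e$.
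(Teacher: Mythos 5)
Your proof is correct and follows essentially the same route as the paper's: both compute the radical to identify $n_{A,\phi}$ (the paper exhibits $\langle e'b\rangle\times\langle e'a\rangle\subseteq K_\phi$ with $e'=e/\gcd(k,e)$, which is exactly your count $|K_\phi|=dg^2/e$), both use squarefreeness of $n$ to force $\phi(b,a)$ to have full order $e$, and both finish with the same rescaling automorphism of $\langle b\rangle$. Your version is marginally tidier in that you compute $|K_\phi|$ exactly rather than only bounding it below, and you track the sign $-1/e$ explicitly where the paper's choice $\lambda\mu\equiv 1$ is off by a harmless sign.
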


\begin{proof}

Let $e'$ be the order of $\phi(b,a)$.  Then $K_{\phi}$ contains $\langle b^{e'}\rangle$ of order $e/e'$ and $\langle a^{e'}\rangle$ of order $d/e'$. Since those two groups are in trivial intersection, $K_{\phi}$ contains their direct product and $|K_{\phi}|$ is divided by $\frac{de}{(e')^2}$.  Remark that $|A|=de$. It follows that

$$n_{A,\phi}:=\sqrt{|A||K_{\phi}|} =de/e'\text{.} $$

Since $n_{A,\phi}$ divides $n$ by assumption, $d\frac{e}{e'}$ divides $n$. Assume that $e'$ is a strict divisor of $e$ and take $p$ a prime number dividing $\frac{e}{e'}$ then $p$ divides $d$ and, hence, $p^2$ divides $n$ which contradicts the fact that $n$ is squarefree. Hence $\phi(b,a)$ is of order $e$. 

\bigskip

Denote $\phi(b,a)=\frac{\lambda}{e}\in\mathbb{Q}/\mathbb{Z}$. Since $\phi(b,a)$ is of order $e$, $\lambda$ is prime to $e$. Let $\mu$ be an integer such that $\lambda\mu=1$ mod $e$. Then the automorphism $f:A\rightarrow A$ sending $b$ to $b^{\mu}$ and $a$ to $a$ is an isometry between $(A,\phi)$ and $\left(\mathbb{Z}/e\times\mathbb{Z}/d,\begin{pmatrix}0&-1/e\\1/e&0\end{pmatrix}\right)$. \end{proof}

This lemma leads to the following theorem :

\begin{thm}\label{sqf}

Let $n\geq 2$. Then  conjugacy classes of centralizers of irreducible subgroups of $PSL(n,\mathbb{C})$ are classified by their isomorphism classes if and only if $n$ is squarefree.

\end{thm}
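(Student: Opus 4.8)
The plan is to establish the two directions separately, using Theorems \ref{classif1} and \ref{classif2} to reduce everything to statements about $n$-subsymplectic alternate modules. By Theorem \ref{classif1}, conjugacy classes of centralizers correspond bijectively to isometry classes of $n$-subsymplectic modules; so the theorem is equivalent to saying: \emph{two $n$-subsymplectic modules with isomorphic underlying groups are automatically isometric if and only if $n$ is squarefree}.

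First I would prove the ``if'' direction (squarefree $\Rightarrow$ isomorphism determines conjugacy). Fix $n$ squarefree and let $(A,\phi)$ be an $n$-subsymplectic module. Since $n$ is squarefree, $A$ has exponent dividing $n$ and hence $A$ is a product of cyclic groups of squarefree, pairwise... more precisely $A \cong \prod_{p \mid n} \mathbb{Z}/p^{a_p}$ with each $a_p \le 2$ (the exponent divides $n$ and the order divides $n^2$ by Result \ref{r1}, so each $p$-part is $1$, $\mathbb{Z}/p$, or $\mathbb{Z}/p \times \mathbb{Z}/p$). The form $\phi$ splits as an orthogonal direct sum over the primes $p \mid n$ (distinct-prime components are automatically orthogonal since $\mathrm{Hom}(\mathbb{Z}/p^a, \mathbb{Z}/q^b) = 0$ for $p \ne q$), so it suffices to treat each $p$-component. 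On a $p$-component which is either $0$, $\mathbb{Z}/p$, or $\mathbb{Z}/p \times \mathbb{Z}/p$, an alternate form is either identically zero (forced on $0$ and $\mathbb{Z}/p$, and possible on $\mathbb{Z}/p\times\mathbb{Z}/p$) or a nondegenerate symplectic form on $\mathbb{Z}/p \times \mathbb{Z}/p$; in the latter case Lemma \ref{modsqf} (with $e=d=p$) pins down the isometry type. So the isometry type of $\phi$ is determined by the underlying group together with, for each $p$ with $p$-part $\mathbb{Z}/p\times\mathbb{Z}/p$, the single bit ``zero or nondegenerate''. But that bit is itself forced by the group: by Theorem \ref{classif2}, $n_{A,\phi}$ divides $n$, i.e. $\sqrt{|A||K_\phi|}$ divides $n$; if some $p$-part is $\mathbb{Z}/p\times\mathbb{Z}/p$ and $\phi$ were zero there, then $p^2 \mid |K_\phi|$ and $p^2 \mid |A|$, giving $p^2 \mid n_{A,\phi} \mid n$, contradicting squarefreeness. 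Hence on every $\mathbb{Z}/p\times\mathbb{Z}/p$ component the form is nondegenerate, and the isometry type of $(A,\phi)$ depends only on $A$.

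For the ``only if'' direction I would exhibit, for $n$ not squarefree, two $n$-subsymplectic modules with isomorphic underlying groups that are not isometric — equivalently, two non-conjugate centralizers of irreducible subgroups that are abstractly isomorphic. Write $p^2 \mid n$. Take $A = \mathbb{Z}/p \times \mathbb{Z}/p$ with, on one hand, the zero form $\phi_0 \equiv 0$ (its Lagrangian order is $\sqrt{p^2 \cdot p^2} = p^2$, and $p^2 \mid n$, so it is $n$-subsymplectic — indeed it embeds in $\mathbb{Z}/p \times \mathbb{Z}/p \times (\mathbb{Z}/p)^* \times (\mathbb{Z}/p)^*$ hence in $B \times B^*$ for $B$ of order $n$), and on the other hand the nondegenerate form $\phi_1$ (Lagrangian order $p$, which divides $n$). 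These two modules have the same underlying group but different radicals ($K_{\phi_0} = A$, $K_{\phi_1} = 0$), so they are not isometric; by Theorem \ref{classif1} the corresponding centralizers are not conjugate, while by construction (and Theorem \ref{classif2}, which realizes both) they are isomorphic as groups. This contradicts classification by isomorphism type.

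The main obstacle is the bookkeeping in the ``if'' direction: making rigorous the primary-decomposition/orthogonality argument that reduces an arbitrary $n$-subsymplectic module (squarefree $n$) to its $p$-components, and then invoking Lemma \ref{modsqf} uniformly. The key non-formal input — that the nondegeneracy bit on each $\mathbb{Z}/p\times\mathbb{Z}/p$ component is forced by the Lagrangian-order condition of Theorem \ref{classif2} — is the crux, and it is exactly where squarefreeness is used; everything else is structure theory of finite alternate modules that can be cited from \cite{T-W} or \cite{Gue2}.
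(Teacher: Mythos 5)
Your proposal is correct and follows essentially the same route as the paper: both directions reduce via Theorem \ref{classif1} to isometry of the associated alternate modules, the squarefree direction rests on the fact that the Lagrangian-order condition forces nondegeneracy on each $\mathbb{Z}/p\times\mathbb{Z}/p$ piece (which is exactly the content of Lemma \ref{modsqf}), and the converse uses the identical pair $(\mathbb{Z}/p\times\mathbb{Z}/p,0)$ versus $\mathbb{Z}/p\times(\mathbb{Z}/p)^*$. The only difference is cosmetic: you organize the first direction by orthogonal $p$-primary decomposition, whereas the paper applies Lemma \ref{modsqf} directly to the rank-$\le 2$ group.
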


\begin{proof}
Assume that $n$ is squarefree. Let $H_1$ and $H_2$ be two irreducible subgroups in $SL(n,\mathbb{C})$ such that $Z_n(H_1)$ and $Z_n(H_2)$ are isomorphic. First remark that, by corollary~\ref{ncompatible}, $Z_n(H_i)$ is necessarily isomorphic to a subgroup of $B\times B$ where $B$ is abelian of order $n$. Since $n$ is squarefree, $B$ is actually cyclic. It follows that the common rank of $Z_n(H_1)$ and $Z_n(H_2)$ is lesser or equal to $2$.

If the common rank of $Z_n(H_1)$ and $Z_n(H_2)$ is $0$ or $1$, then for $i=1,2$,  $(Z_n(H_i),\phi_{H_i})$ is necessarily the trivial module (i.e. $\phi_{H_i}=0$). Since $Z_n(H_1)$ and $Z_n(H_2)$ are isomorphic, it is clear that $(Z_n(H_1),\phi_{H_1})$ and $(Z_n(H_2),\phi_{H_2})$ are isometric. Whence, using theorem~\ref{classif1}, $Z_n(H_1)$ and $Z_n(H_2)$ are conjugate.

 Else, the common rank of $Z_n(H_1)$ and $Z_n(H_2)$ is $2$. Let $\mathbb{Z}/e\times\mathbb{Z}/d$ be the isomorphism class of both $Z_n(H_1)$ and $Z_n(H_2)$. Since $n_{Z_n(H_1),\phi_{H_1}}$ and $n_{Z_n(H_2),\phi_{H_2}}$ both divide $n$, we are in the condition of application of lemma \ref{modsqf}. In particular $(Z_n(H_1),\phi_{H_1})$ and $(Z_n(H_2),\phi_{H_2})$ are isometric and, by theorem \ref{classif1}, $Z_n(H_1)$ and $Z_n(H_2)$ are conjugate. 

\bigskip

Assume that $n$ is not squarefree. Let $p$ be a prime number such that $p^2$ divides $n$. We define two structures of alternate module on $\mathbb{Z}/p\times \mathbb{Z}/p$ :

\begin{align*}
M_1:=(\mathbb{Z}/p\times\mathbb{Z}/p,0)& \text{ where the form is trivial}\text{.} \\
M_2:=\mathbb{Z}/p\times(\mathbb{Z}/p)^*&\text{ where the form is symplectic}\text{.} 
\end{align*}

Remark that $M_1$ is Lagrangian in $M_1$ so the order of its Lagrangian is $p^2$ which divides $n$. By theorem \ref{classif2}, there exists an irreducible subgroup $H_1$ of $SL(n,\mathbb{C})$ such that $(Z_n(H_1),\phi_{H_1})$ is isometric to $M_1$.  On the other hand the radical of $M_2$ is trivial, so the order of its Lagrangian is $p$ which divides $n$. By theorem \ref{classif2}, there exists an irreducible subgroup $H_2$ of $SL(n,\mathbb{C})$ such that $(Z_n(H_2),\phi_{H_2})$ is isometric to $M_2$.

Even if $Z_n(H_1)$ and $Z_n(H_2)$ are both isomorphic to $\mathbb{Z}/p\times\mathbb{Z}/p$, $Z_n(H_1)$ and $Z_n(H_2)$ are not conjugate because $M_1$ and $M_2$ are  not isometric  (see the last assertion of proposition~\ref{3.assoc}). \end{proof}

As a result, when $n$ is squarefree, we can compute the number of conjugacy classes of centralizers of irreducible subgroups in $PSL(n,\mathbb{C})$. 

\begin{cor}\label{sqfnumber}

Let $n=p_1\dots p_r$ be a squarefree integer (i.e. $p_1$,\dots, $p_r$ are two-by-two different prime numbers). There are exactly $3^r$ conjugacy classes of centralizers of irreducible subgroups in $PSL(n,\mathbb{C})$. 

\end{cor}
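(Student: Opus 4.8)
The plan is to use Theorem~\ref{sqf} to reduce the counting of conjugacy classes to the counting of isometry classes of $n$-subsymplectic modules, and then to exploit the multiplicative structure of both sides over the prime factorization $n=p_1\dots p_r$. By Theorem~\ref{classif1} and Theorem~\ref{classif2}, conjugacy classes of centralizers of irreducible subgroups in $PSL(n,\mathbb{C})$ are in bijection with isometry classes of $n$-subsymplectic alternate modules, i.e. alternate modules $(A,\phi)$ that embed isometrically in $B\times B^*$ for some abelian group $B$ of order $n$. Since $n$ is squarefree, the only such $B$ (up to isomorphism) is the cyclic group $\mathbb{Z}/n$, so we must count isometry classes of submodules of the symplectic module $\mathbb{Z}/n\times(\mathbb{Z}/n)^*$ up to isometry.

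First I would record that everything in sight decomposes along the Chinese Remainder Theorem: $\mathbb{Z}/n\times(\mathbb{Z}/n)^*$ is the orthogonal direct sum of the symplectic modules $\mathbb{Z}/p_i\times(\mathbb{Z}/p_i)^*$, and any submodule, being a direct sum of its $p_i$-primary components, splits accordingly; moreover an isometry between two such submodules must respect the primary decomposition. Hence the number of isometry classes of $n$-subsymplectic modules is the product over $i$ of the number of isometry classes of $p_i$-subsymplectic modules, and it suffices to prove the count is $3$ when $n=p$ is prime.

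For $n=p$ prime, I would enumerate the submodules of $\mathbb{Z}/p\times(\mathbb{Z}/p)^*\cong(\mathbb{Z}/p)^2$ with the standard symplectic form, up to isometry. The underlying group is either trivial, $\mathbb{Z}/p$, or $(\mathbb{Z}/p)^2$. If it is trivial or $\mathbb{Z}/p$, the induced form is necessarily zero (an alternating form on a cyclic group vanishes), giving exactly two isometry classes: the zero module on $0$ and the zero module on $\mathbb{Z}/p$. If the underlying group is all of $(\mathbb{Z}/p)^2$, the restricted form is the full symplectic form, giving one more class. These three are pairwise non-isometric since their underlying groups, or their radicals, differ — precisely the distinction exploited in the proof of Theorem~\ref{sqf} via the last assertion of Proposition~\ref{3.assoc}. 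This yields $3$ classes for $n=p$, hence $3^r$ in general.

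I do not expect a serious obstacle here; the one point requiring a little care is the claim that isometries are compatible with the primary decomposition, so that the counting is genuinely multiplicative rather than merely submultiplicative. This follows because the $p_i$-primary component of an alternate module is characteristic (preserved by every automorphism, in particular by every isometry), so an isometry $(A,\phi)\to(A',\phi')$ restricts to isometries between corresponding primary components, and conversely a family of isometries between primary components assembles to a global isometry; the bilinear form respects the decomposition since $\phi$ vanishes on pairs of elements of coprime order. Granting this, the corollary is immediate from Theorem~\ref{sqf}, Theorem~\ref{classif1}, Theorem~\ref{classif2} and the prime case.
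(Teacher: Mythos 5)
Your proposal is correct, but it takes a slightly different route from the paper. The paper first invokes Theorem~\ref{sqf} to replace conjugacy classes by \emph{isomorphism} classes of the underlying abelian groups, then uses Corollary~\ref{ncompatible} to reduce to counting isomorphism classes of subgroups of $\mathbb{Z}/n\times\mathbb{Z}/n$, which it does via the Sylow decomposition $A\mapsto(A_{p_1},\dots,A_{p_r})$ with three choices of isomorphism type per prime. You instead count \emph{isometry} classes of $n$-subsymplectic modules directly (via Theorems~\ref{classif1} and~\ref{classif2}), decomposing the symplectic module $\mathbb{Z}/n\times(\mathbb{Z}/n)^*$ into an orthogonal sum of its $p_i$-primary parts and checking that isometries respect this decomposition and that forms vanish on pairs of elements of coprime order. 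Both arguments are multiplicative over the primes and both yield three classes per prime; the difference is that your version never actually needs Theorem~\ref{sqf} (you cite it, but your counting of isometry classes bypasses the isomorphism-versus-isometry issue entirely), at the cost of having to verify the compatibility of isometries with the primary decomposition --- a point you correctly identify and justify. The paper's version is marginally lighter because isomorphism classes of subgroups of $\mathbb{Z}/n\times\mathbb{Z}/n$ carry no form to track, but it leans on the harder Theorem~\ref{sqf}; yours is more self-contained relative to the classification theorems alone.
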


\begin{proof}
From theorem \ref{sqf}, it suffices to compute the number of different isomorphism classes. When $n$ is squarefree, the cyclic group of order $n$ is the only abelian group of order $n$. Because of corollary \ref{ncompatible}, it suffices to compute the number of subgroups in $\mathbb{Z}/n\times \mathbb{Z}/n$. In order to do this, we associate to any subgroup $A$ of  $\mathbb{Z}/n\times \mathbb{Z}/n$ the $r$-uple $(A_{p_1},\dots,A_{p_r})$ of its $p_i$-Sylows. Since the isomorphism class of $A$ only depends on $(A_{p_1},\dots,A_{p_r})$, it suffices to compute the number of possible choices for the $p_i$-Sylows.

\bigskip

Since $A_{p_i}$ is clearly included in the $p_i$-Sylow of $\mathbb{Z}/n\times \mathbb{Z}/n$ which is isomorphic to $\mathbb{Z}/p_i\times\mathbb{Z}/p_i$, it follows that we have exactly three choices for $A_{p_i}$ : $\{0\}$, $\mathbb{Z}/p_i$ and $\mathbb{Z}/p_i\times\mathbb{Z}/p_i$. As a result we have $3^r$ different choices for the isomorphism class of $A$. \end{proof}

Now we would like to highlight a last  consequence that might be the most fruitful. Let $\mathcal{M}_n$ be the set of isometry classes of alternate modules which are $n$-subsymplectic. We   see $(\mathcal{M}_n,\leq)$ as an ordered set, where $\leq$ is the usual  relation of inclusion (up to isometry).  For any $n\geq 1$, we define a graph structure $G_n$, by taking for the set of vertices, the set $\mathcal{M}_n$ and any two classes of modules $M_1$ and $M_2$ are linked by an edge if $M_1\leq M_2$ and $|M_2|/|M_1|$ is a prime number or $M_2\leq M_1$ and $|M_1|/|M_2|$ is a prime number. 

\bigskip

Before giving some examples we give a notation for some alternate modules. For any finite abelian group $B$, the symplectic module $B\times B^*$ will be denoted $S(B)$. For any integer $k\geq 1$, the trivial module on $\mathbb{Z}/k$ will be denoted by $C_k$. For any triple $(e',e,d)$ of integers such that $e'$ divides $e$ and $e$ divides $d$ :

$$M_{e',e,d}:=\left(\mathbb{Z}/e\times \mathbb{Z}/d,\begin{pmatrix}0&-1/e'\\1/e'&0\end{pmatrix}\right)\text{.}$$ 

Furthermore, here, $\oplus$ will denote the orthogonal sum.

\begin{ex}\label{Mp1}

If $p$ is a prime number, there are exactly $3$ conjugacy classes of centralizers of irreducible subgroups in $PSL(p,\mathbb{C})$.

\end{ex}

\begin{proof}
Indeed, it suffices to compute the cardinal of $\mathcal{M}_{p}$ by theorem \ref{classif2}. In this case, the graph $G_p$ can easily seen to be like in figure \ref{pict1}.

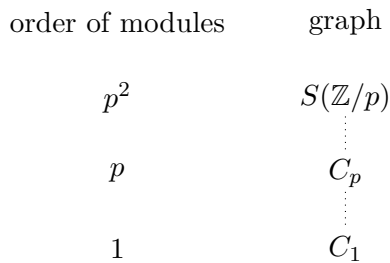
\begin{figure}[h]
\centering

\begin{tikzpicture}
\node at (-3,4) {order of modules};
\node at (0,4) {graph};
\node at (-3,3){$p^2$};
\node at (0,3){$S(\mathbb{Z}/p)$};
\node at (-3,2){$p$};
\node at (0,2) {$C_p$};
\node at (-3,1){$1$};
 \node at (0,1) {$C_1$};

 \draw[dotted] (0,2.75)--(0,2.25);
\draw[dotted] (0,1.75)--(0,1.25);

\end{tikzpicture}
\caption{The graph $G_p$ } \label{pict1}
\end{figure}

\end{proof}

\begin{ex}\label{Mp2}

If $p$ is a prime number, there are exactly $9$ conjugacy classes of centralizers of irreducible subgroups in $PSL(p^2,\mathbb{C})$. 

\end{ex}

\begin{proof}

Indeed, it suffices to compute the cardinal of $\mathcal{M}_{p^2}$ by theorem \ref{classif2}. In this case, the graph $G_{p^2}$ can   be computed to be  like in figure \ref{pict2}.

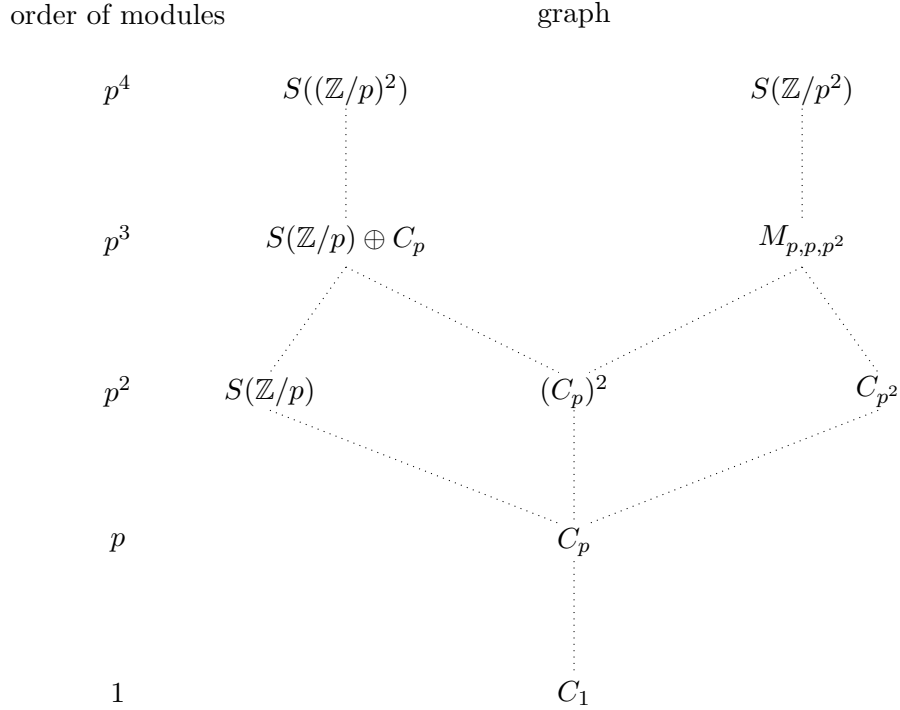
\begin{figure}
\centering

\begin{tikzpicture}
\node at (-6,5) {order of modules};
\node at (0,5) {graph};
\node at (-6,4){$p^4$};
\node at (-3,4){$S((\mathbb{Z}/p)^2)$};
\node at (3,4) {$S(\mathbb{Z}/p^2) $};
\node at (-6,2){$p^3$};
\node at (-3,2) {$S(\mathbb{Z}/p)\oplus C_p$};
\node at (3,2){$M_{p,p,p^2}$};
\node at (-6,0){$p^2$};
\node at (-4,0)   {$S(\mathbb{Z}/p)$};
\node at (0,0)   {$(C_p)^2 $};
\node at (4,0)   {$C_{p^2}$};
\node at (-6,-2)   {$p $};
\node at (0,-2)   {$C_p$};
\node at (-6,-4)   {$1$};
\node at (0,-4)   {$C_1$};

\draw[dotted] (-3,3.75)--(-3,2.25);
\draw[dotted]  (3,3.75)--(3,2.25);
\draw[dotted]  (-3,1.65)--(-4,0.25);
\draw[dotted]  (-3,1.65)--(-0.20,0.25);
\draw[dotted]  (3,1.65)--(4,0.25);
\draw[dotted]  (3,1.65)--(0.20,0.25);
 \draw[dotted] (-4,-0.25)--(-0.20,-1.75);
 \draw[dotted] (0,-0.25)--(0,-1.75);
 \draw[dotted] (4,-0.25)--(0.20,-1.75);
\draw[dotted] (0,-2.25)--(0,-3.75);

\end{tikzpicture}
\caption{The graph $G_{p^2}$ } \label{pict2}
\end{figure}\end{proof}

\begin{ex}\label{Mp3}

If $p$ is a prime number, there are exactly $24$ conjugacy classes of centralizers of irreducible subgroups in $PSL(p^3,\mathbb{C})$. 

\end{ex}

\begin{proof}

Indeed, it suffices to compute the cardinal of $\mathcal{M}_{p^3}$ by theorem \ref{classif2}. In this case, the graph $G_{p^3}$ can be computed like in figure \ref{pict3}.
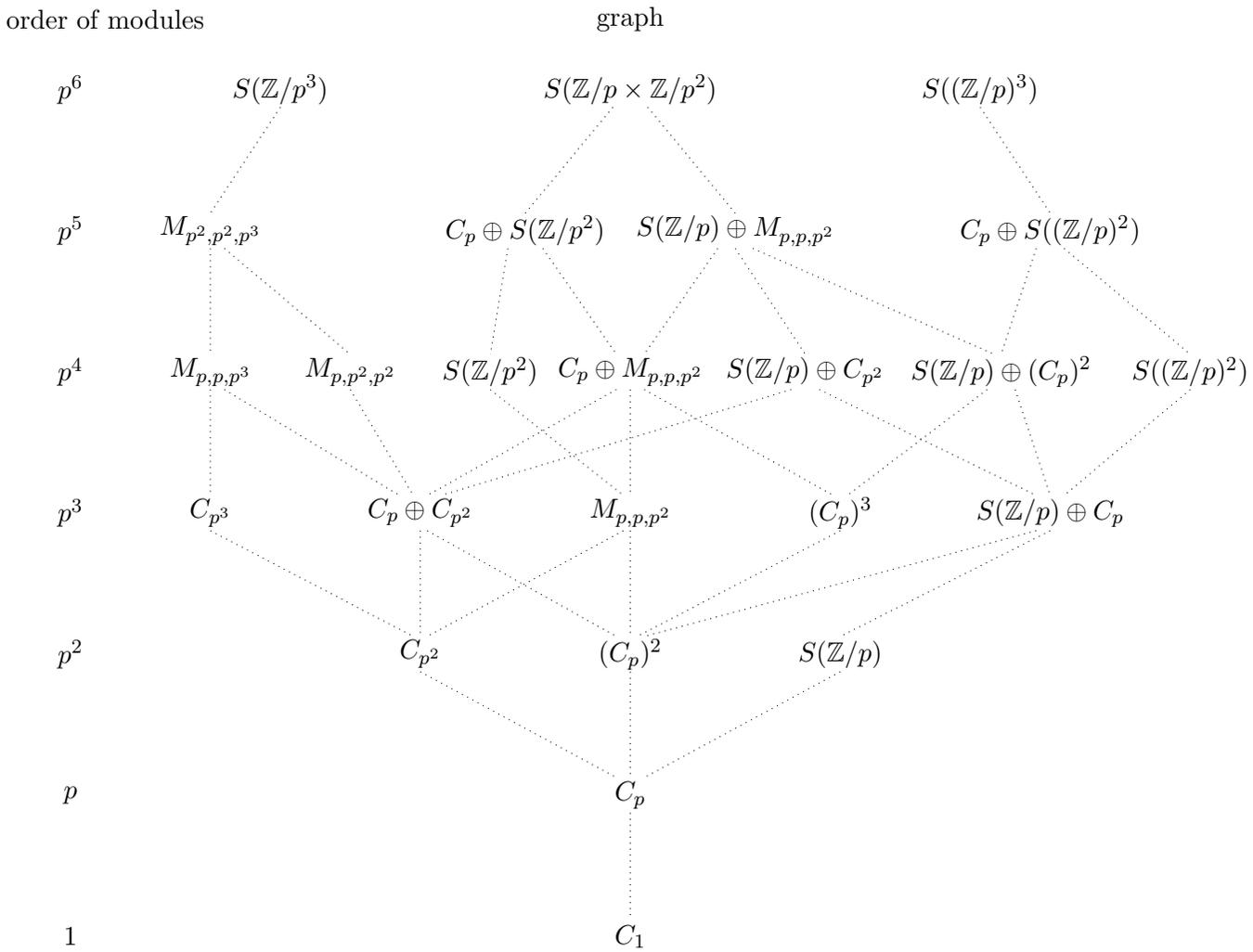
\begin{figure}[h!]
\centering

\begin{tikzpicture}
 \node at (-7,-7.5)[xscale=1,yscale=1,rotate=90]{ order of modules};
\node at (-6,-8)[xscale=1,yscale=1,rotate=90]{ $p^6$};
\node at (-4,-8)[xscale=1,yscale=1,rotate=90]{ $p^5$};
\node at (-2,-8)[xscale=1,yscale=1,rotate=90]{ $p^4$};
\node at (0,-8)[xscale=1,yscale=1,rotate=90]{ $p^3$};
\node at (2,-8)[xscale=1,yscale=1,rotate=90]{ $p^2$};
\node at (4,-8)[xscale=1,yscale=1,rotate=90]{ $p$};
\node at (6,-8)[xscale=1,yscale=1,rotate=90]{ $1$};

 \node at (-7,0)[xscale=1,yscale=1,rotate=90]{ graph};

\node at (-6,-5)[xscale=1,yscale=1,rotate=90]{ $S(\mathbb{Z}/p^3)$};
\node at (-6,0)[xscale=1,yscale=1,rotate=90]{ $S(\mathbb{Z}/p\times\mathbb{Z}/p^2)$};
\node at (-6,5)[xscale=1,yscale=1,rotate=90]{ $S((\mathbb{Z}/p)^3)$};

\node at (-4,-6)[xscale=1,yscale=1,rotate=90]{ $M_{p^2,p^2,p^3}$};
\node at (-4,-1.5)[xscale=1,yscale=1,rotate=90]{ $C_p\oplus S(\mathbb{Z}/p^2)$};
\node at (-4,1.5)[xscale=1,yscale=1,rotate=90]{ $S(\mathbb{Z}/p)\oplus M_{p,p,p^2}$};
\node at (-4,6)[xscale=1,yscale=1,rotate=90]{ $C_p\oplus S((\mathbb{Z}/p)^2)$};

\node at (-2,-6)[xscale=1,yscale=1,rotate=90]{ $M_{p,p,p^3}$};
\node at (-2,-4)[xscale=1,yscale=1,rotate=90]{ $M_{p,p^2,p^2}$};
\node at (-2,-2)[xscale=1,yscale=1,rotate=90]{ $S(\mathbb{Z}/p^2)$};
\node at (-2,0)[xscale=1,yscale=1,rotate=90]{ $C_p\oplus M_{p,p,p^2}$};
\node at (-2,2.5)[xscale=1,yscale=1,rotate=90]{ $S(\mathbb{Z}/p)\oplus C_{p^2}$};
\node at (-2,5.3)[xscale=1,yscale=1,rotate=90]{ $S(\mathbb{Z}/p)\oplus (C_p)^2$};
\node at (-2,8)[xscale=1,yscale=1,rotate=90]{ $S((\mathbb{Z}/p)^2)$};

\node at (0,-6)[xscale=1,yscale=1,rotate=90]{ $C_{p^3}$};
\node at (0,-3)[xscale=1,yscale=1,rotate=90]{ $C_p\oplus C_{p^2}$};
\node at (0,0)[xscale=1,yscale=1,rotate=90]{ $M_{p,p,p^2}$};
\node at (0,3)[xscale=1,yscale=1,rotate=90]{ $(C_p)^3$};
\node at (0,6)[xscale=1,yscale=1,rotate=90]{ $S(\mathbb{Z}/p)\oplus C_p$};

\node at (2,-3) [xscale=1,yscale=1,rotate=90]{ $C_{p^2}$};
\node at (2,0) [xscale=1,yscale=1,rotate=90]{ $(C_p)^2$};
\node at (2,3) [xscale=1,yscale=1,rotate=90]{ $S(\mathbb{Z}/p)$};

\node at (4,0)  [xscale=1,yscale=1,rotate=90]{ $C_p$};

\node at (6,0)  [xscale=1,yscale=1,rotate=90]{ $C_1$};

\draw[dotted] (-5.75,-5)--(-4.25,-6);
\draw[dotted] (-5.75,-0.25)--(-4.25,-1.5);
\draw[dotted] (-5.75,0.25)--(-4.25,1.5);
\draw[dotted] (-5.75,5)--(-4.25,6);

\draw[dotted] (-3.75,-6)--(-2.25,-6);
\draw[dotted] (-3.75,-5.80)--(-2.25,-4);
\draw[dotted] (-3.75,-1.75)--(-2.25,-2);
\draw[dotted] (-3.75,-1.25)--(-2.25,-0.20);
\draw[dotted] (-3.75,1.25)--(-2.25,0.20);
\draw[dotted] (-3.75,1.5)--(-2.25,2.5);
\draw[dotted] (-3.75,1.75)--(-2.25,5.2);
\draw[dotted] (-3.75,5.8)--(-2.25,5.3);
\draw[dotted] (-3.75,6.2)--(-2.25,8);

\draw[dotted] (-1.75,-6)--(-0.25,-6);
\draw[dotted] (-1.75,-5.80)--(-0.25,-3.3);
\draw[dotted] (-1.75,-4)--(-0.25,-3.1);
\draw[dotted] (-1.75,-2)--(-0.25,-0.1);
\draw[dotted] (-1.75,-0.2)--(-0.25,-2.9);
\draw[dotted] (-1.75,0)--(-0.25,0);
\draw[dotted] (-1.75,0.2)--(-0.25,2.9);
\draw[dotted] (-1.75,2.3)--(-0.25,-2.7);
\draw[dotted] (-1.75,2.7)--(-0.25,5.8);
\draw[dotted] (-1.75,5.1)--(-0.25,3.1);
\draw[dotted] (-1.75,5.5)--(-0.25,6);
\draw[dotted] (-1.75,8)--(-0.25,6.2);

\draw[dotted] (0.25,-6)--(1.75,-3.1);
\draw[dotted] (0.25,-3)--(1.75,-3);
\draw[dotted] (0.25,-2.9)--(1.75,-0.2);
\draw[dotted] (0.25,-0.1)--(1.75,-2.9);
\draw[dotted] (0.25,0)--(1.75,0);
\draw[dotted] (0.25,3)--(1.75,0.1);
\draw[dotted] (0.25,5.8)--(1.75,0.2);
\draw[dotted] (0.25,6)--(1.75,3);

\draw[dotted] (2.25,-3)--(3.75,-0.2);
\draw[dotted] (2.25,0)--(3.75,0);
\draw[dotted] (2.25,3)--(3.75,0.2);

\draw[dotted] (4.25,0)--(5.75,0);

\end{tikzpicture}
\caption{The graph $G_{p^3}$ } \label{pict3}
\end{figure}

\end{proof}

In \cite{Gue}, we defined for $G$ a complex semi-simple Lie group and $\Gamma$ a finitely generated group a \textit{bad} representation to be  an irreducible representation $\rho: \Gamma\rightarrow G$  with a non-trivial centralizer. We also defined the \textit{singular locus} of the character variety to be the set of conjugacy classes of bad representations. This set is denoted $\chi_{Sing}^i(\Gamma,G)$.  In the same paper, we used the theorem classifying centralizers of irreducible subgroups in $PSL(p,\mathbb{C})$ ($p$ is a prime number) to study, for any  finitely generated group $\Gamma$, the singular locus of the character variety $\chi_{Sing}^i(\Gamma,PSL(p,\mathbb{C}))$. 

\bigskip

Somehow, we hope that the same thing is possible when  the prime number is replaced by any integer $n$. Basically, the idea is the following. Let $M$ be the isometry class of an alternate module in $\mathcal{M}_n$. Because of theorem \ref{classif2}, there exists a centralizer $Z_M$ of  an irreducible subgroup in $PSL(n,\mathbb{C})$ whose associated alternate module is $M$. For any $M\in\mathcal{M}_n$, we define  a subset $\chi_M$ of the character variety :

$$\chi_M:=\left\lbrace PSL(n,\mathbb{C})\cdot \rho\left|\begin{array}{l}\rho\in Hom(\Gamma,Z_{PSL(n,\mathbb{C})}(Z_M))\\
\rho\text{ is irreducible}\end{array}\right.\right\rbrace\text{.} $$

Using the fact that $Z_M$ is well defined up to conjugation, $\chi_M$ does not depend on the conjugate of $Z_M$ we chose. For any $PSL(n,\mathbb{C})\cdot \rho$ in $\chi_M$, $\rho$ is centralized by some conjugate of $Z_M$. Furthermore, using again the unicity, up to conjugation of $Z_M$, if $\rho$ is an irreducible representation of $\Gamma$ into $PSL(n,\mathbb{C})$ such that the alternate module $M_{\rho}$ associated to its centralizer contains an isometric copy of $M$ then the orbit of $\rho$ is contained in $\chi_M$.  As a result, if we consider $(\mathcal{LS}_n,\leq)$ to be the set of $\chi_M$ for $M$ being an element of $\mathcal{M}_n$ and $\leq$ is the usual inclusion then $(\mathcal{LS}_n,\leq)$ is a partially ordered set which verifies  :

\begin{prop}\label{duality}

For $n\geq 2$ and $\Gamma$ a finitely generated group,  the application $\chi:\mathcal{M}_n\rightarrow \mathcal{LS}_n$ sending $M$ to $\chi_M$ is decreasing.  Furthermore :

$$\chi_{Sing}^i(\Gamma,PSL(n,\mathbb{C}))=\bigcup_{\substack{M\in\mathcal{M}_n\\ |M|>1}}\chi_M \text{.}$$

 Finally, if $M\in \mathcal{M}_n$ is of order $n^2$ then $\chi_M$ is finite. 

\end{prop}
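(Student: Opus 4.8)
The plan is to treat the three assertions of the proposition separately: the first two are soft consequences of the classification (theorems~\ref{classif1} and~\ref{classif2}) together with the double-centralizer property (corollary~\ref{3.stabparsgrp}), while the third is immediate from the structure of full centralizers (proposition~\ref{full}).

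For the monotonicity of $\chi$, I would start from $M_1\leq M_2$ in $\mathcal{M}_n$, i.e.\ an isometric embedding of $M_1$ into $M_2$. Since $\chi_M$ does not depend on the chosen conjugate of $Z_M$, it suffices to exhibit one pair of representatives with $Z_{M_1}\leq Z_{M_2}$: take $Z_{M_2}$ to be any centralizer of an irreducible subgroup of $PSL(n,\mathbb{C})$ with associated module $M_2$, and let $A_0\leq Z_{M_2}$ be a subgroup whose induced submodule of $(Z_{M_2},\phi)$ is isometric to $M_1$. By corollary~\ref{3.stabparsgrp}, $A_0$ is itself the centralizer of an irreducible subgroup of $PSL(n,\mathbb{C})$, and, exactly as in the proof of $3\Rightarrow 1$ in theorem~\ref{classif2}, its associated alternate module is that induced submodule, hence isometric to $M_1$; so $Z_{M_1}:=A_0$ is a valid representative. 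Then $Z_{M_1}\leq Z_{M_2}$ gives $Z_{PSL(n,\mathbb{C})}(Z_{M_2})\subseteq Z_{PSL(n,\mathbb{C})}(Z_{M_1})$, whence $Hom(\Gamma,Z_{PSL(n,\mathbb{C})}(Z_{M_2}))\subseteq Hom(\Gamma,Z_{PSL(n,\mathbb{C})}(Z_{M_1}))$ and finally $\chi_{M_2}\subseteq\chi_{M_1}$, which is the asserted decrease.

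For the description of the singular locus I would prove both inclusions. For the inclusion $\supseteq$: if $[\rho]\in\chi_M$ with $|M|>1$, then a representative $\rho$ is irreducible with $\rho(\Gamma)\subseteq Z_{PSL(n,\mathbb{C})}(Z_M)$, so $Z_M\subseteq Z_{PSL(n,\mathbb{C})}(\rho(\Gamma))$; as $|Z_M|=|M|>1$ and $PSL(n,\mathbb{C})$ has trivial center, $\rho$ is a bad representation, i.e.\ $[\rho]\in\chi_{Sing}^i(\Gamma,PSL(n,\mathbb{C}))$. For the inclusion $\subseteq$: if $[\rho]$ is in the singular locus, put $Z:=Z_{PSL(n,\mathbb{C})}(\rho)$; then $Z$ is the $n$-centralizer of the irreducible subgroup $\pi_n^{-1}(\rho(\Gamma))$ of $SL(n,\mathbb{C})$, so by proposition~\ref{3.assoc} it carries an associated alternate module $M_\rho$ with $|M_\rho|=|Z|>1$. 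By theorem~\ref{classif1}, $Z$ is conjugate to the chosen representative $Z_{M_\rho}$, and after replacing $\rho$ by a conjugate we may assume $Z=Z_{M_\rho}$; then $\rho(\Gamma)\subseteq Z_{PSL(n,\mathbb{C})}(Z_{PSL(n,\mathbb{C})}(\rho(\Gamma)))=Z_{PSL(n,\mathbb{C})}(Z)=Z_{PSL(n,\mathbb{C})}(Z_{M_\rho})$, so $[\rho]\in\chi_{M_\rho}$ with $|M_\rho|>1$.

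For the finiteness statement, when $|M|=n^2$ a representative $Z_M$ is a full centralizer, so by proposition~\ref{full} it equals its own centralizer: $Z_{PSL(n,\mathbb{C})}(Z_M)=Z_M$, a finite group of order $n^2$. Since $\Gamma$ is finitely generated, $Hom(\Gamma,Z_M)$ is finite, hence so is the set $\chi_M$ of $PSL(n,\mathbb{C})$-orbits of its irreducible members. The computations are all short; the only point needing care is the bookkeeping with conjugacy classes — making sure $\chi_M$ is genuinely independent of the representative $Z_M$ and that the normalizations up to conjugation used in the second part are legitimate — which is exactly where theorem~\ref{classif1} and corollary~\ref{3.stabparsgrp} enter, so I would make those invocations explicit.
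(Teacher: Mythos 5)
Your proof is correct, and for the first two assertions it follows the same route as the paper: choose nested representatives $Z_{M_1}\leq Z_{M_2}$ to get monotonicity (you usefully make explicit, via corollary~\ref{3.stabparsgrp} and the argument of theorem~\ref{classif2}, why such a choice is legitimate, which the paper only asserts), and prove the two inclusions for the singular locus exactly as the paper does, invoking theorems~\ref{classif1} and~\ref{classif2} at the same points.

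The one place where you genuinely diverge is the finiteness of $\chi_M$ for $|M|=n^2$. You observe that $Z_{PSL(n,\mathbb{C})}(Z_M)=Z_M$ is a finite group of order $n^2$ and that $Hom(\Gamma,Z_M)$ is finite because a homomorphism from a finitely generated group to a finite group is determined by finitely many choices of images of generators; $\chi_M$ is then a quotient of a subset of this finite set. The paper instead first shows (via proposition~\ref{full}) that any $\rho$ contributing to $\chi_M$ must have image exactly $Z_M$, then enumerates the finitely many normal subgroups $N\trianglelefteq\Gamma$ with $\Gamma/N\cong Z_M$ (reducing to subgroups of the finitely generated abelianization $\Gamma^{Ab}$), and finally bounds the conjugacy classes of surjections $\Gamma/N\to Z_M$ for each such $N$. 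Your argument is shorter and reaches the same conclusion; the paper's longer route has the side benefit of describing the points of $\chi_M$ more precisely (as surjections onto $Z_M$ indexed by their kernels), but that extra precision is not needed for the statement being proved.
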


\begin{proof}

Assume that $M\leq N$ with $M,N\in\mathcal{M}_n$. Then, we may choose $Z_M\leq Z_N$. Hence, we have $Z_{PSL(n,\mathbb{C})}(Z_N)\leq Z_{PSL(n,\mathbb{C})}(Z_M)$ which   leads to $\chi_N\leq \chi_M$. 

\bigskip

Let $\rho$ be a representation such that $PSL(n,\mathbb{C})\cdot\rho$ belongs to $\chi_M$ with $|M|>1$ then $\rho$ is irreducible by definition and $\rho$ is centralized by one conjugate of $Z_M$ which is not trivial. It follows that $Z(\rho)$ cannot be trivial and $\rho$ is bad representation. It follows that its conjugacy class belongs to the singular locus of the character variety.

\bigskip

Conversly, assume that $\rho$ is a bad representation, then define $M$ to be the isometry class of the alternate module associated to $Z(\rho)$. Applying theorem \ref{classif2}, $M\in \mathcal{M}_n$ and since $Z(\rho)$ is not trivial, $|M|>1$. By theorem \ref{classif1}, it follows that $Z(\rho)$ is conjugate to $Z_M$. Since $\rho$ is irreducible, we do get that its conjugacy class belongs to $\chi_M$ where $|M|>1$. Hence we get :

$$\chi_{Sing}^i(\Gamma,PSL(n,\mathbb{C}))=\bigcup_{\substack{M\in\mathcal{M}_n\\ |M|>1}}\chi_M \text{.}$$

For the last assertion, remark that if $M$ is of order $n^2$ then $Z_M$ is a full centralizer. By proposition \ref{full}, $Z_M$ is irreducible and self-centralizing. In particular $Z_{PSL(n,\mathbb{C})}(Z_M)=Z_M$. It follows that for any representation $\rho$ of $\Gamma$ such that its conjugacy class belongs to $\chi_M$, the image of $\rho$ must be equal to $Z_M$ (taking $H:=\pi_n^{-1}(\rho(\Gamma))$ in proposition \ref{full}).

\bigskip

Consider, in $\Gamma$, the set $X$ of normal subgroups $N$ such that $\Gamma/N$ is isomorphic to $Z_M$. Since $Z_M$ is abelian, $X$ is in bijection with the set $X'$ of subgroups $A$ in $\Gamma^{Ab}$ such that $\Gamma^{Ab}/A$ is isomorphic to $Z_M$. Since $\Gamma$ is finitely generated, $\Gamma^{Ab}$ is finitely generated. This implies that the set $X'$ is necessarily finite. For any representation $\rho$ whose conjugacy class belongs to  $\chi_M$, we get that $Ker(\rho)\in X$. 

\bigskip

If we denote $\chi_{M,N}$ to be the conjugacy classes of $\rho$ whose kernel is equal to $N$, then $\chi_{M,N}$ is exactly the set of conjugacy classes of  representations from $\Gamma/N$ onto $Z_M$. Since we are dealing with finite group representations, $\chi_{M,N}$ is necessarily finite. Since $\chi_M$ is a union of $\chi_{M,N}$ for $N\in X$ and $X$ is finite, it follows that $\chi_M$ is finite as well.  \end{proof}

This leads to a decomposition of the singular locus $\chi_{Sing}^i(\Gamma,PSL(n,\mathbb{C}))$ governed by the graph $G_n$ we defined earlier. Each vertex $M$ of $G_n$ leads to a stratum of the singular locus, and the minimal strata of the singular locus are simply a finite union of points, namely the $\chi_M$ where $M$ is a symplectic module of order $n^2$.

\section{Centralizers in quotients of $SL(n,\mathbb{C})$}\label{sec4}

In this section, we gather some generalizations of the results obtained for centralizers in $PSL(n,\mathbb{C})$ to centralizers of irreducible subgroups in $\pi_d(SL(n,\mathbb{C}))$ where $d$ divides $n$. Roughly speaking, we shall see that the results are almost the same.

\subsection{Abelianity, exponent and order}\label{sec4sub1}

First, we handle the abelianity of the $d$-centralizers.

\begin{prop}\label{dab}

Let $n\geq 1$, $d$ be a divisor of $n$ and $H$ be an irreducible subgroup of $SL(n,\mathbb{C})$, then $Z_d(H)$ is abelian.

\end{prop}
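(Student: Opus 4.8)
The plan is to work with the $d$-extended centralizer $U_d(H)=\pi_d^{-1}(Z_d(H))$ and to show that for any two elements $u,v\in U_d(H)$ the commutator $[u,v]$ lies in $\langle\xi^{n/d}I_n\rangle=\ker\pi_d$. Since $Z_d(H)=\pi_d(U_d(H))$, this says precisely that $\pi_d(u)$ and $\pi_d(v)$ commute, so $Z_d(H)$ is abelian.

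First I would record two elementary facts. Since $H$ is irreducible, $U_n(H)$ is finite (as in the proof of Proposition \ref{3.diagonal}), and because $\langle\xi^{n/d}I_n\rangle\subseteq\langle\xi I_n\rangle$ we have $U_d(H)\subseteq U_n(H)$; hence $U_d(H)$ is finite and every $u\in U_d(H)$ is diagonalizable. Moreover, by Proposition \ref{2p1} the group $Z_n(H)=U_n(H)/\langle\xi I_n\rangle$ is abelian, so for $u,v\in U_d(H)$ the commutator $[u,v]$ is a scalar matrix, say $[u,v]=\xi^s I_n$.

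The key step is the claim that for $u\in U_d(H)$ the matrix $u^d$ is scalar. To see this I would rerun the spectral analysis from the proof of Proposition \ref{3.diagonal}. By Lemma \ref{2l1} the map $h\mapsto s_h$, where $[h,u]=\xi^{s_h}I_n$, is a group morphism $H\to\mathbb{Z}/n$, and by Lemma \ref{2.2l1} it describes an action of $H$ on the spectrum $Sp(u)$; the very definition of $U_d(H)$ forces $s_h\in\frac{n}{d}\mathbb{Z}/n\mathbb{Z}$ for every $h$, so the image of this morphism lies in the subgroup of order $d$ generated by $\xi^{n/d}$. By irreducibility of $H$ the action on $Sp(u)$ is transitive (the sum of the eigenspaces of $u$ over an $H$-orbit is $H$-stable and nonzero, hence equal to $\mathbb{C}^n$), so $Sp(u)$ is a single coset $\mu_0\Lambda$ of a subgroup $\Lambda\subseteq\langle\xi^{n/d}\rangle$. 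Consequently any two eigenvalues of $u$ have ratio of order dividing $d$, hence a common $d$-th power, and $u^d$, being diagonalizable with a single eigenvalue, is scalar.

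Finally I would conclude. Since $u^d$ is scalar it is central in $SL(n,\mathbb{C})$, so $[u^d,v]=I_n$; on the other hand, because $[u,v]=\xi^s I_n$ is central, the standard identity $[u^d,v]=[u,v]^d$ holds, whence $\xi^{sd}I_n=I_n$. Thus $\xi^s$ has order dividing $d$, i.e.\ $\xi^s\in\langle\xi^{n/d}\rangle$, so $[u,v]=\xi^s I_n\in\ker\pi_d$. Therefore $\pi_d(u)$ and $\pi_d(v)$ commute for all $u,v\in U_d(H)$, and $Z_d(H)$ is abelian. I do not expect a serious obstacle here: the only point requiring care is transporting the eigenvalue analysis of Proposition \ref{3.diagonal} to the present setting, namely verifying that membership in $U_d(H)$ (rather than merely in $U_n(H)$) is exactly what confines the eigenvalue ratios of $u$ to the subgroup of order $d$; everything else is formal manipulation of central commutators.
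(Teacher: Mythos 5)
Your proof is correct and follows the same overall strategy as the paper's: reduce to showing $[u,v]\in\langle\xi^{n/d}I_n\rangle=\ker\pi_d$ for all $u,v\in U_d(H)$, establish that $u^d$ is scalar, and conclude via the central-commutator identity $[u,v]^d=[u^d,v]=I_n$. The only divergence is in how you prove $u^d$ is scalar: you rerun the spectral analysis of Proposition \ref{3.diagonal}, confining $Sp(u)$ to a single coset of a subgroup of $\langle\xi^{n/d}\rangle$, whereas the paper gets there in one line by applying Lemma \ref{2l1} to the morphism $h\mapsto[u,h]$ --- since each $[u,h]$ has order dividing $d$ by the very definition of $U_d(H)$, one has $[u^d,h]=[u,h]^d=I_n$ for all $h\in H$, so $u^d$ centralizes the irreducible group $H$ and is scalar by Schur's lemma. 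Your spectral route is valid (and yields extra information about the eigenvalues of $u$), but the Schur's-lemma shortcut is shorter and avoids having to reverify transitivity of the $H$-action on $Sp(u)$.
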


\begin{proof}
Let $u,v$ be two elements in $U_d(H)$ (we recall that $U_d(H):=\pi_d^{-1}(Z_d(H))$). Since we have $U_d(H)\leq U_n(H)$ and $Z_n(H)$ abelian (by proposition \ref{2p1}), it follows that $[u,v]$ belongs to $\langle \xi I_n\rangle$. In particular, lemma \ref{2l1} implies :

\begin{equation}
[u,v]^d=[u^d,v]\text{.}
\label{commutuv}
\end{equation}

On the other hand, since $u\in U_d(H)$, we also have that for all $h\in H$ $[u,h]$ is in $\langle \xi I_n\rangle$ of order dividing $d$. It follows (by lemma    \ref{2l1} again) that $[u^d,h]=I_n$ for all $h\in H$ i.e. $u^d$ centralizes $H$. Since $H$ is irreducible, Schur's lemma implies that $u^d\in \langle \xi I_n\rangle$. It follows, from equation \ref{commutuv}, that $[u,v]$ is of order dividing $d$. Since it is central,  we get, by definition, that $[u,v]\in \ker(\pi_d)$. As a result $Z_d(H)=\pi_d(U_d(H))$ is abelian.\end{proof}

The second proposition deals with the exponent of $Z_d(H)$. 

\begin{prop}\label{dexpo}
Let $n\geq 1$, $d$ be a divisor of $n$ and $H$ be an irreducible subgroup of $SL(n,\mathbb{C})$, then the exponent of $Z_d(H)$ divides $lcm(n/d,d)$. 

\end{prop}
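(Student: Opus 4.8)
The plan is to mimic the structure of the proof of Proposition~\ref{dab} but to keep track of orders more precisely. Let $u\in U_d(H)$; I want to bound the order of $\pi_d(u)$ in $Z_d(H)$, and it suffices to show that $u^{\mathrm{lcm}(n/d,d)}\in\ker(\pi_d)=\langle\xi^{n/d}I_n\rangle$.

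First I would recall two facts already available. Since $H$ is irreducible, $\pi_n(u)\in Z_n(H)$ has order dividing $n$ by Proposition~\ref{2p1}; so $u^n\in\langle\xi I_n\rangle$, and in fact a sharper statement is true. Second, since $u\in U_d(H)$, for every $h\in H$ the commutator $[u,h]$ lies in $\langle\xi^{n/d}I_n\rangle$, a group of order $d$; applying Lemma~\ref{2l1} gives $[u^d,h]=[u,h]^d=I_n$ for all $h\in H$, so $u^d$ centralizes $H$, and Schur's lemma forces $u^d\in\langle\xi I_n\rangle$. Write $u^d=\xi^k I_n$ for some $k\in\mathbb{Z}/n$. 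Then $u^{dn}=I_n$, but more usefully $u^{d\cdot(n/d)}=u^n=\xi^{k(n/d)}I_n$... the point I actually need is that $(u^d)^{n/d}=\xi^{k n/d}I_n\in\langle\xi^{n/d}I_n\rangle=\ker(\pi_d)$. Hence $u^{d(n/d)}=u^n\in\ker(\pi_d)$; this already shows the order of $\pi_d(u)$ divides $n$, which is not yet good enough.

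To get $\mathrm{lcm}(n/d,d)$ I would argue as follows. From $u^d=\xi^k I_n$ we get that $\pi_d(u)^d=\pi_d(\xi^k I_n)$, and $\xi^k I_n$ has image in $Z_d(H)=\pi_d(SL(n,\mathbb{C}))$-center whose order divides $n/d$ (since $\ker\pi_d$ has order $d$ and the center of $SL(n,\mathbb C)$ has order $n$). Therefore $\pi_d(u)^{d\cdot (n/d)}=1$ — again just $n$. The improvement comes from combining: let $m$ be the order of $\pi_d(u)$. On one hand $\pi_d(u)^d=\pi_d(\xi^kI_n)$ has order dividing $n/d$, so $m$ divides $d\cdot(n/d)$; on the other hand, I claim $m$ divides a quantity forcing it into $\mathrm{lcm}(n/d,d)$ by looking at the two "coordinates" separately. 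Concretely, the exponent of $Z_n(H)$ divides $n$ and the natural surjection $Z_n(H)\twoheadrightarrow Z_d(H)$ has kernel of order dividing $d$ inside a group of exponent dividing $n$; an element of order dividing $n$ in $Z_n(H)$ maps to an element whose order divides $n$ but, after quotienting by the order-$d$ subgroup $\langle\xi I_n\rangle/\langle\xi^{n/d}I_n\rangle\cong\mathbb Z/(n/d)$... I would instead phrase it via Proposition~\ref{3.diagonal}: if $\pi_n(u)$ has order $\delta$ in $Z_n(H)$ then $u$ is conjugate to an explicit block-diagonal matrix with blocks $\xi^{(n/\delta)j}I_{n/\delta}$, and $\pi_d(u)$ has order $\delta/\gcd(\delta,\text{something involving }n/d)$. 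The cleanest route: $\delta\mid n$, and the order of $\pi_d(u)$ is $\delta/\gcd(\delta, n/d)$ composed appropriately — then elementary number theory gives that this divides $\mathrm{lcm}(n/d,d)$ using $\delta\mid n=(n/d)\cdot d$.

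The main obstacle is precisely this last number-theoretic bookkeeping: translating "$\pi_n(u)$ has order dividing $n$ and $u^d$ is central of order dividing $n$" into "$\pi_d(u)$ has order dividing $\mathrm{lcm}(n/d,d)$". I expect the right formulation is: write the order $\delta$ of $\pi_n(u)$; since $u^d\in Z(SL(n,\mathbb C))$ the order of $\pi_d(u^d)=\pi_d(u)^d$ divides $n/d$, so $\mathrm{ord}(\pi_d(u))$ divides $d\cdot(n/d)/\gcd$-corrections, while also $\mathrm{ord}(\pi_d(u))\mid\mathrm{ord}(\pi_n(u))\mid n$; and any integer dividing both $n$ and $d\cdot(n/d)$ in a way compatible with reduction mod the order-$d$ kernel must divide $\mathrm{lcm}(n/d,d)$. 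I would verify this on the explicit diagonal form from Proposition~\ref{3.diagonal} to avoid hand-waving, since there $u$ is literally $\lambda\,\mathrm{diag}(I,\xi^{n/\delta}I,\dots)$ and one can read off the order of its image in $\pi_d(SL(n,\mathbb C))$ directly as a gcd expression in $n/\delta$, $n/d$, and $\delta$, then check divisibility of $\mathrm{lcm}(n/d,d)$ by that expression for every $\delta\mid n$.
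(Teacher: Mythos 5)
Your overall strategy --- reduce to the explicit conjugate form of Proposition~\ref{3.diagonal} and compute $u^m$ for $m=\mathrm{lcm}(n/d,d)$ --- is exactly the paper's, and your opening step ($[u,h]^d=[u^d,h]=I_n$, hence $u^d\in\langle\xi I_n\rangle$ by Schur) is also the right one. But the write-up does not close, and two intermediate claims are wrong. First, $\mathrm{ord}(\pi_d(u))\mid\mathrm{ord}(\pi_n(u))$ is backwards: $\pi_n$ factors through $\pi_d$, so it is $\mathrm{ord}(\pi_n(u))$ that divides $\mathrm{ord}(\pi_d(u))$. Second, your final plan --- ``check divisibility of $\mathrm{lcm}(n/d,d)$ by that expression for every $\delta\mid n$'' --- would fail: for $n=4$, $d=2$, $\delta=4$, a matrix conjugate to $\lambda\,\mathrm{diag}(1,i,-1,-i)$ has image of order $4$ in $\pi_2(SL(4,\mathbb{C}))$, and $4\nmid\mathrm{lcm}(2,2)=2$. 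The linchpin of the paper's proof is precisely that the order $d_0$ of $\pi_n(u)$ divides $d$, not merely $n$; you actually prove this the moment you show $u^d$ is central, but you never feed it back in, and it is what forces $D^m=I_n$ for the block-diagonal part $D$ (since $d_0\mid d\mid m$) and reduces everything to the scalar $\lambda^m$.

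The remaining issue is that scalar. Knowing only $u^d=\xi^kI_n$ does not pin down $k$ well enough: you would need $n/d$ to divide $km/d$, which fails for arbitrary $k$. The paper gets this from the explicit description of $\lambda$ in Proposition~\ref{3.diagonal}: when $d_0$ is odd, or $d_0$ is even and $n/d_0$ is even, $\lambda\in\langle\xi\rangle$ and $n/d\mid m$ gives $\lambda^m\in\langle\xi^{n/d}\rangle=\ker\pi_d$ at once; when $d_0$ is even and $n/d_0$ is odd, $\lambda$ is only a $2n$-th root of unity and one must use $2\mid d_0\mid m$ to check that $\lambda^m$ still lands in $\langle\xi^{n/d}\rangle$. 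Your ``gcd expression in $n/\delta$, $n/d$, and $\delta$'' ignores $\lambda$ entirely, so this case is not covered. With $d_0\mid d$ stated and used, and the two cases for $\lambda$ worked out, your outline becomes the paper's proof.
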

\begin{proof}
Let $u$ be an element of $U_d(H)$ and $d_0$ be the order of $\pi_n(u)$. Let $h$ be in  $H$ such that $[h,u]$ is of order $d_0$. Since $u\in U_d(H)$, it follows that $[h,u]$ must also be of order dividing $d$. It follows that $d_0$ divides $d$. Using proposition \ref{3.diagonal} :

$$u\text{ is conjugate to }\lambda \begin{pmatrix}I_{\frac{n}{d_0}}&&&\\&\xi^{\frac{n}{d_0}}I_{\frac{n}{d_0}}&&\\&&\ddots&\\&&&\xi^{\frac{n}{d_0}(d_0-1)}I_{\frac{n}{d_0}}\end{pmatrix}\text{.} $$

$$\text{ where }\lambda\in \left\lbrace
\begin{array}{ll}
\langle \xi I_n\rangle  &  \mbox{if $d_0$ is odd or $d_0$  is even and $n/d_0$ even,}\\
\sqrt{\xi^{-\frac{n(d_0-1)}{d_0}}}\langle \xi I_n\rangle& \mbox{if $d_0$ is even and $n/d_0$ odd.}
\end{array}
\right.$$

Let $m$ be a common multiplicator of $n/d$ and $d$. Then :

$$u^m\text{ is conjugate to }\lambda^m \begin{pmatrix}I_{\frac{n}{d_0}}&&&\\&\xi^{\frac{n}{d_0}}I_{\frac{n}{d_0}}&&\\&&\ddots&\\&&&\xi^{\frac{n}{d_0}(d_0-1)}I_{\frac{n}{d_0}}\end{pmatrix}^m \text{.}$$

Since $d_0$ divides $d$ which divides $m$,  $u^m$ is conjugate to $\lambda^mI_n$. On one hand, if $d_0$ is odd, or $d_0$ is even and $n/d_0$ even, then $\lambda^m\in\langle \xi^{m}\rangle\leq \langle \xi^{n/d}\rangle$. In particular, $u^m$ is central of order dividing $d$, whence $u^m\in \ker(\pi_d)$ and $\pi_d(u)$ is of order dividing $m$. On the other hand, if $d_0$ is even and $n/d_0$ is odd then $\lambda=\sqrt{\xi^{-\frac{n(d_0-1)}{d_0}}}\xi^k$ where $0\leq k\leq n-1$. Remark that $2$ divides $m$ since it divides $d_0$. Hence :

$$\sqrt{\xi^{-\frac{n(d_0-1)}{d_0}}}^m=(\xi^{\frac{n}{d_0}})^{-\frac{m}{2}(d_0-1)}\text{ belongs to }\langle \xi^{n/d}\rangle \text{.}$$

Since $(\xi^k)^m$ is also in $ \langle \xi^{n/d}\rangle$,  $\lambda^m$ belongs to  $\langle \xi^{n/d}\rangle$. Whence $u^m$ is central of order dividing $d$ and $u^m\in \ker(\pi_d)$, so that $\pi_d(u)$ is of order dividing $m$.  In any case, all elements in $Z_d(H)$ are of order dividing $m$. This is, in particular, true for $m=lcm(n/d,d)$. \end{proof}

We finally handle the order :

\begin{prop}\label{dcard1}

Let $n\geq 1$, $d$ be a divisor of $n$ and $H$ be an irreducible subgroup of $SL(n,\mathbb{C})$ then $Z_d(H)$ is of order dividing $n^3/d$. 

\end{prop}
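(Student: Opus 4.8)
The plan is to follow the same strategy as in corollary~\ref{3.ncardinal}, working with the $d$-extended centralizer $U_d(H):=\pi_d^{-1}(Z_d(H))\leq SL(n,\mathbb{C})$ and reducing the desired bound to the one already established for $|U_n(H)|$. First I would record two elementary observations. Directly from the explicit descriptions of $U_d(H)$ and $U_n(H)$, every $g\in U_d(H)$ satisfies $[g,h]=\xi^{\frac{n}{d}k}I_n\in\langle\xi I_n\rangle$ for all $h\in H$, so $U_d(H)\leq U_n(H)$; being a subgroup of the finite group $U_n(H)$, its order $|U_d(H)|$ divides $|U_n(H)|$. Moreover, $\pi_d$ restricted to $U_d(H)$ is onto $Z_d(H)$ with kernel $\ker(\pi_d)=\langle\xi^{\frac{n}{d}}I_n\rangle$ of order $d$; hence $|U_d(H)|=d\,|Z_d(H)|$, so in particular $d$ divides $|U_d(H)|$ and $|Z_d(H)|=|U_d(H)|/d$.

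Next I would invoke corollary~\ref{3.ncardinal}: since $|Z_n(H)|$ divides $n^2$ and $|U_n(H)|=n\,|Z_n(H)|$ (because $\ker(\pi_n)=\langle\xi I_n\rangle$ has order $n$), the order $|U_n(H)|$ divides $n^3$. Combined with the first step, this yields that $|U_d(H)|$ divides $n^3$.

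Finally, writing $n^3=m\,|U_d(H)|$ for a positive integer $m$ and using that $d$ divides $|U_d(H)|$, I get $n^3/d=m\,\bigl(|U_d(H)|/d\bigr)=m\,|Z_d(H)|$, so $|Z_d(H)|$ divides $n^3/d$, which is the claim (and $n^3/d$ is indeed an integer since $d\mid n$). There is no serious difficulty here: the one point that must not be overlooked is that the $d$ we divide out is genuinely a divisor of $|U_d(H)|$, which is precisely the inclusion $\ker(\pi_d)\subseteq U_d(H)$; everything else is the same bookkeeping already carried out in corollary~\ref{3.ncardinal}.
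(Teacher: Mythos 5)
Your proposal is correct and follows essentially the same route as the paper: both arguments rest on the inclusion $U_d(H)\leq U_n(H)$, the identities $|U_d(H)|=d\,|Z_d(H)|$ and $|U_n(H)|=n\,|Z_n(H)|$, and the bound $|Z_n(H)|\mid n^2$ from corollary~\ref{3.ncardinal}. The only difference is cosmetic bookkeeping (you divide by $d$ at the end, the paper divides by $d$ at each step of the divisibility chain), so nothing further is needed.
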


\begin{proof}

We know that $U_d(H)\leq U_n(H)$. Furthermore, $\pi_d(U_d(H))=Z_d(H)$. Hence :

\begin{align*}
|Z_d(H)|&=\frac{|U_d(H)|}{d}\\
&\text{divides } \frac{|U_n(H)|}{d}\text{ since  $U_d(H)\leq U_n(H)$}\\
&\text{divides }\frac{n|Z_n(H)|}{d}\text{ since $\pi_n(U_n(H))=Z_n(H)$}\\
&\text{divides }\frac{n^3}{d}\text{ by corollary  \ref{3.ncardinal}.}
\end{align*}

\end{proof}

In this proof, we used an inclusion which is, a priori, very weak : $U_d(H)\leq U_n(H)$.  The next example shows that the bound of proposition \ref{dcard1}  cannot be improved in general (computations are left to the reader).

\begin{ex}\label{3.excardinal2}

We study the case $n:=4$ and $d:=2$. There exists an exceptional isomorphism between $SL(4,\mathbb{C})$ and $Spin(6,\mathbb{C})$. This makes of  $SL(4,\mathbb{C})$ the universal cover of $SO(6,\mathbb{C})$. In particular,  $\pi_2(SL(4,\mathbb{C}))$ is identified to  $SO(6,\mathbb{C})$. We denote  $\pi_2:SL(4,\mathbb{C})\rightarrow SO(6,\mathbb{C})$ the induced projection. We denote $\overline{H}\leq SO(6,\mathbb{C})$, the subgroup of diagonal matrices in $SO(6,\mathbb{C})$. Then $\overline{H}$ is a finite group of order $32$, which is its own centralizer in  $SO(6,\mathbb{C})$. Let  $H:=\pi_2^{-1}(\overline{H})$ then $|Z_2(H)|=32=\frac{4^3}{2}=\frac{n^3}{d}$.

\end{ex}

 In order to improve the bound in some particular cases, we prove a lemma :

\begin{lemme}\label{dtor}

Let $n\geq 1$, $d$ be a divisor of $n$ and $H$ be an irreducible subgroup of $SL(n,\mathbb{C})$. Recall that we have $U_d(H)\leq U_n(H)$ and $U_n(H)/\langle\xi I_n\rangle$ is equal to $Z_n(H)$ then the quotient group $U_d(H)/\langle \xi I_n\rangle$, included in $Z_n(H)$, is equal to $(Z_n(H))_{(d)}$, the $d$-torsion of the abelian group $Z_n(H)$.

\end{lemme}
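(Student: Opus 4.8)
The plan is to prove the two inclusions $U_d(H)/\langle\xi I_n\rangle \subseteq (Z_n(H))_{(d)}$ and $(Z_n(H))_{(d)}\subseteq U_d(H)/\langle\xi I_n\rangle$ separately, working throughout inside the finite group $U_n(H)$ and using the commutator morphisms $\phi_n$ from Lemma \ref{2l1}.

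\textbf{First inclusion.} Let $u\in U_d(H)$. By definition of the $d$-extended centralizer, for every $h\in H$ there is $k\in\mathbb{Z}/d$ with $[u,h]=\xi^{\frac{n}{d}k}I_n$; in particular $[u,h]^d=I_n$. Applying Lemma \ref{2.2l1} (or directly Lemma \ref{2l1}, since $[u,h]$ is central), $[u^d,h]=[u,h]^d=I_n$ for all $h\in H$, so $u^d$ centralizes $H$. By irreducibility of $H$ and Schur's lemma, $u^d\in Z_{SL(n,\mathbb{C})}(H)=\langle\xi I_n\rangle$. Hence $\pi_n(u)$ has order dividing $d$ in $Z_n(H)$, i.e. $\pi_n(u)\in(Z_n(H))_{(d)}$. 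This gives $U_d(H)/\langle\xi I_n\rangle \subseteq (Z_n(H))_{(d)}$.

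\textbf{Second inclusion.} Conversely, suppose $u\in U_n(H)$ with $\pi_n(u)\in(Z_n(H))_{(d)}$, that is $u^d\in\langle\xi I_n\rangle$. We must show $\pi_n(u)\in Z_d(H)$, equivalently that for every $h\in H$ the commutator $[u,h]$ lies in $\ker(\pi_d)=\langle\xi^{\frac{n}{d}}I_n\rangle$, i.e. $[u,h]^d=I_n$. Fix $h\in H$; since $u\in U_n(H)$, $[u,h]$ is central, so Lemma \ref{2l1} gives $[u,h]^d=[u^d,h]$. But $u^d\in\langle\xi I_n\rangle\subseteq Z(SL(n,\mathbb{C}))$ commutes with $h$, so $[u^d,h]=I_n$ and therefore $[u,h]^d=I_n$. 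As $h$ was arbitrary, $u\in U_d(H)$, hence $\pi_n(u)\in U_d(H)/\langle\xi I_n\rangle$.

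Combining the two inclusions yields the claimed equality $U_d(H)/\langle\xi I_n\rangle = (Z_n(H))_{(d)}$; note also that the containment $U_d(H)\leq U_n(H)$ used here is immediate from the descriptions of the $d$- and $n$-extended centralizers. The argument is essentially formal, so I do not expect a genuine obstacle; the only point requiring care is the bookkeeping on orders modulo the cyclic kernels $\langle\xi I_n\rangle$ and $\langle\xi^{\frac{n}{d}}I_n\rangle$, and the repeated — but legitimate — use that a central commutator $[u,h]$ satisfies $[u,h]^k=[u^k,h]$ (Lemma \ref{2l1}), which is exactly what converts the condition ``$u^d$ is a scalar'' into ``$[u,h]$ is a $d$-torsion scalar''.
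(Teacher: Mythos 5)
Your proof is correct and follows essentially the same route as the paper: both directions rest on the identity $[u,h]^d=[u^d,h]$ for central commutators (Lemma \ref{2l1}) together with Schur's lemma to place $u^d$ in $\langle\xi I_n\rangle$. The only blemish is the phrase ``we must show $\pi_n(u)\in Z_d(H)$'' in the second inclusion, which mixes up the ambient quotients --- the correct target, which you in fact then verify, is $u\in U_d(H)$.
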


\begin{proof}
Let $u$ be in $U_d(H)$ then we  have that for all $h\in H$ $[u,h]$ is in $\langle \xi I_n\rangle$ of order dividing $d$. In particular $u^d$ (by lemma \ref{2l1}) commutes with $H$. It follows that $u^d$ is central by Schur's lemma, whence $u^d\in \langle \xi I_n\rangle$. As a result, $\pi_n(u)$ is of order dividing $d$. Finally this leads to $U_d(H)/\langle \xi I_n\rangle\leq (Z_n(H))_{(d)}$.

Let $u\in U_n(H)$ be such that $u\in \pi_n^{-1} ((Z_n(H))_{(d)})$. It follows that $u^d$ is central. Let $h\in H$, since $u\in U_n(H)$, we know that $[u,h]$ is central. By lemma \ref{2l1}, $[u,h]^d=[u^d,h]$ and since $u^d$ is central, $[u,h]$ is of order dividing $d$. By definition of $U_d(H)$, this implies that $u\in U_d(H)$. With the first inclusion, we have $U_d(H)/\langle \xi I_n\rangle= (Z_n(H))_{(d)}$.\end{proof}

This leads to the following proposition :

\begin{prop}\label{3.cardgen}

Let $n\geq 1$, $d$ dividing  $n$ and $H$ an irreducible subgroup of $SL(n,\mathbb{C})$. If  $gcd(d,n/d)=1$ then the order of  $Z_d(H)$ divides $nd$.

\end{prop}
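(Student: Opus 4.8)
The plan is to reduce the order of $Z_d(H)$ to the orders of two pieces that we can control separately, using the coprimality $\gcd(d,n/d)=1$. By Lemma \ref{dtor}, the group $U_d(H)/\langle \xi I_n\rangle$ is exactly $(Z_n(H))_{(d)}$, the $d$-torsion of the abelian group $Z_n(H)$. Hence $|Z_d(H)| = |U_d(H)|/d = (n/d)\cdot |(Z_n(H))_{(d)}|$, so it suffices to bound $|(Z_n(H))_{(d)}|$ by $d^2$. By Corollary \ref{3.ncardinal} we know $|Z_n(H)|$ divides $n^2$; writing $Z_n(H)$ as a product of its primary components and using that $\gcd(d,n/d)=1$, the $d$-torsion subgroup of $Z_n(H)$ lives inside the product of the primary components for primes dividing $d$. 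Since $|Z_n(H)|$ divides $n^2=(d)^2(n/d)^2$ and these two factors are coprime, the order of the $d$-part of $Z_n(H)$ divides $d^2$; a fortiori $|(Z_n(H))_{(d)}|$ divides $d^2$.

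First I would set up the primary decomposition: write $d = \prod_{p\mid d} p^{a_p}$ and $n/d = \prod_{q} q^{b_q}$, which by coprimality involve disjoint sets of primes. Then $Z_n(H) \cong \bigoplus_p Z_n(H)_p$ over all primes $p$ dividing $n$, and $(Z_n(H))_{(d)} = \bigoplus_{p\mid d} (Z_n(H)_p)_{(p^{a_p})} \leq \bigoplus_{p\mid d} Z_n(H)_p$. For each $p\mid d$, the order $|Z_n(H)_p|$ divides the $p$-part of $n^2$, which is $p^{2a_p}$ because $p\nmid (n/d)$. Multiplying over $p\mid d$ gives $\bigl|\bigoplus_{p\mid d}Z_n(H)_p\bigr|$ divides $\prod_{p\mid d}p^{2a_p} = d^2$, hence $|(Z_n(H))_{(d)}|$ divides $d^2$. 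Combining with the displayed identity $|Z_d(H)| = (n/d)\cdot|(Z_n(H))_{(d)}|$ yields that $|Z_d(H)|$ divides $(n/d)d^2 = nd$.

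I do not expect a serious obstacle here: the only subtlety is making sure the coprimality hypothesis is used at the right place, namely to conclude that the $p$-part of $n^2$ for $p\mid d$ is entirely ``accounted for'' by $d^2$ and receives no contribution from $(n/d)^2$. Everything else is the structure theory of finite abelian groups plus the already-proven facts (Lemma \ref{dtor} and Corollary \ref{3.ncardinal}). One should double-check the edge cases $d=1$ (where the bound reads $|Z_1(H)|$ divides $n$, i.e. $Z_1(H)$ is trivial since $Z_1(H)=Z(SL(n,\mathbb{C}))/\{I\}$ is trivial — consistent) and $d=n$ (where $\gcd(n,1)=1$ forces no constraint beyond $n$ prime-power-free issues, and the bound $n^2$ matches Corollary \ref{3.ncardinal}).
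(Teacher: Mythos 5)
Your proof is correct and follows essentially the same route as the paper's: Lemma \ref{dtor} to identify $U_d(H)/\langle \xi I_n\rangle$ with the $d$-torsion of $Z_n(H)$, then the primary decomposition of the abelian group $Z_n(H)$ together with $|Z_n(H)|\mid n^2$ and the coprimality of $d$ and $n/d$ to bound that torsion subgroup by $d^2$. (Only your $d=1$ sanity check is slightly off: $Z_1(H)$ is the full center $\langle \xi I_n\rangle$ of order $n$, not trivial, but the claimed bound $nd=n$ is still met.)
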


\begin{proof}
 
 The order of  $Z_n(H)$ divides $n^2$ by corollary \ref{3.ncardinal}.  We decompose $n$ as a product of prime numbers $n=p_1^{a_1}\dots p_r^{a_r}$. Since $gcd(d,n/d)=1$ and the order of the $p_i$'s does not matter, we may as well as assume that  $d=p_1^{a_1}\dots p_s^{a_s}$ and $n/d=p_{s+1}^{a_{s+1}}\dots p_r^{a_r}$. 
 
 \bigskip
 
Since $Z_n(H)$ is abelian by proposition \ref{2p1} of order dividing $n^2$ (by corollary \ref{3.ncardinal}), there exists, for any $1\leq i\leq r$ a unique subgroup $S_i\leq Z_n(H)$ whose order is a power of  $p_i$ such that : $Z_n(H)=S_1\times\cdots\times S_r$. Clearly,  $(Z_n(H))_{(d)}=S_1\times\cdots\times S_s $ and since $|Z_n(H)|$ divides $n^2$, we deduce from this, that  $|(Z_n(H))_{(d)}|$ divides $d^2$. Lemma \ref{dtor} states that $U_d(H)=\pi_n^{-1}((Z_n(H))_{(d)})$. Hence:

\begin{align*}
|Z_d(H)|&=|\pi_d(U_d(H))|= |U_d(H)|/d\\
&= |\pi_n^{-1}((Z_n(H))_{(d)})|= n|(Z_n(H))_{(d)}|/d\\
\end{align*}

Since $|(Z_n(H))_{(d)}|$ divides $d^2$, it follows that $|Z_d(H)|$ divides $nd$. \end{proof}

We gather the results of this subsection in one single theorem :

\begin{thm}\label{casd}
Let $n\geq 1$, $d$ be a divisor of $n$ and $H$ be an irreducible subgroup of $SL(n,\mathbb{C})$ then $Z_d(H)$ is abelian of exponent dividing $lcm(n/d,d)$ and of order dividing $n^3/d$. If $gcd(n/d,d)=1$ then $Z_d(H)$ is of order dividing $nd$. 
\end{thm}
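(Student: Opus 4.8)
The plan is simply to collect the four propositions of this subsection, one per clause of the statement. First, abelianity of $Z_d(H)$ is exactly Proposition~\ref{dab}: for $u,v\in U_d(H)\leq U_n(H)$ one has $[u,v]\in\langle\xi I_n\rangle$ by Proposition~\ref{2p1}, while $u^d$ centralizes $H$ and is therefore scalar by Schur's lemma, so $[u,v]^d=[u^d,v]=I_n$ and $[u,v]\in\ker(\pi_d)$. Second, the exponent bound is Proposition~\ref{dexpo}: if $\pi_n(u)$ has order $d_0$, the constraint coming from $u\in U_d(H)$ forces $d_0$ to divide $d$, and feeding the explicit conjugacy class of Proposition~\ref{3.diagonal} into the computation shows $u^{m}\in\ker(\pi_d)$ for $m=lcm(n/d,d)$, the only delicate point being the case $d_0$ even and $n/d_0$ odd, which is handled using the fact that $2$ divides $m$.

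Third, the order bound $n^3/d$ is Proposition~\ref{dcard1}, obtained from the inclusion $U_d(H)\leq U_n(H)$ alone: $|Z_d(H)|=|U_d(H)|/d$ divides $|U_n(H)|/d=n|Z_n(H)|/d$, which divides $n^3/d$ by Corollary~\ref{3.ncardinal}. Finally, when $gcd(n/d,d)=1$ the sharper bound $nd$ is Proposition~\ref{3.cardgen}: Lemma~\ref{dtor} identifies $U_d(H)/\langle\xi I_n\rangle$ with the $d$-torsion subgroup $(Z_n(H))_{(d)}$ of the abelian group $Z_n(H)$, and decomposing $Z_n(H)$ into its primary components shows that $|(Z_n(H))_{(d)}|$ divides $d^2$ since $|Z_n(H)|$ divides $n^2$, whence $|Z_d(H)|=n|(Z_n(H))_{(d)}|/d$ divides $nd$.

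Since every ingredient is already in place, there is no real obstacle here; the theorem is a consolidation of the preceding propositions. If one wanted a self-contained argument, the substantive work is all upstream: the character computation of Proposition~\ref{3.car}, which through the norm identity yields $|Z_n(H)|\mid n^2$ in Corollary~\ref{3.ncardinal}, and Lemma~\ref{dtor} together with the primary decomposition of $Z_n(H)$ for the coprime refinement.
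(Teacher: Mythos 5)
Your proposal is correct and follows exactly the paper's route: the theorem is stated in the paper with no separate proof precisely because it consolidates Propositions~\ref{dab}, \ref{dexpo} and \ref{dcard1} together with Proposition~\ref{3.cardgen}, and your summaries of each of those arguments (the commutator identity $[u,v]^d=[u^d,v]$ with Schur's lemma, the divisibility $d_0\mid d$ fed into Proposition~\ref{3.diagonal}, the index computation via $U_d(H)\leq U_n(H)$ and Corollary~\ref{3.ncardinal}, and Lemma~\ref{dtor} with the primary decomposition in the coprime case) match the paper's proofs. Nothing is missing.
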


In the next subsection, we shall study the conjugacy classes of $d$-centralizer of irreducible subgroups in $SL(n,\mathbb{C})$ and briefly apply this to the study of the corresponding  character variety.

\subsection{Singularities in the associated character variety}\label{sec4sub2}

First, we remark that we can always project $Z_d(H)$ onto a subgroup $A_d(H)$ of $PSL(n,\mathbb{C})$. Since it is a subgroup of $Z_n(H)$ (by lemma \ref{dtor}) it follows, by corollary \ref{3.stabparsgrp}, that this group $A_d(H)$ is the centralizer of an irreducible subgroup of $PSL(n,\mathbb{C})$. In particular we can associate to $A_d(H)$ a structure of alternate module. Denote $M_d(H):=(A_d(H),\phi_{H,d})$ the alternate module that we get with this construction. Now we easily generalize theorem~\ref{classif1}. 

\begin{prop}\label{dclassif1}
Let $n\geq 1$, $d$ be a divisor of $n$ and $H_1$, $H_2$  be two irreducible subgroups of $SL(n,\mathbb{C})$ then $Z_d(H_1)$ and $Z_d(H_2)$ are conjugate if and only if $M_d(H_1)$ and $M_d(H_2)$ are isometric.
\end{prop}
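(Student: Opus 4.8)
The plan is to reduce the statement to theorem \ref{classif1}, i.e. to the case $d=n$, by recognizing $M_d(H)$ as the alternate module canonically attached to $A_d(H)$ when the latter is regarded as the centralizer of an irreducible subgroup of $PSL(n,\mathbb{C})$. The one preliminary observation that makes this work is that $\xi I_n$ centralizes every element of $SL(n,\mathbb{C})$, so $\langle \xi I_n\rangle\leq U_d(H)$; since $\langle\xi I_n\rangle=\ker(\pi_n)$ and $A_d(H)=\pi_n(U_d(H))$, this gives $U_d(H)=\pi_n^{-1}(A_d(H))$. By corollary \ref{3.stabparsgrp}, the group $A_d(H)\leq Z_n(H)$ equals the centralizer $Z_n(K)$ of the irreducible subgroup $K:=\pi_n^{-1}\big(Z_{PSL(n,\mathbb{C})}(A_d(H))\big)$ of $SL(n,\mathbb{C})$ (it is irreducible since it contains $\pi_n(H)$), and then $U_n(K)=\pi_n^{-1}(Z_n(K))=\pi_n^{-1}(A_d(H))=U_d(H)$. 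Hence the canonical form $\phi_K$ on $Z_n(K)$, defined through lifts in $U_n(K)=U_d(H)$, coincides with $\phi_{H,d}$, and $M_d(H)=(A_d(H),\phi_{H,d})=(Z_n(K),\phi_K)$ is precisely the alternate module of the $PSL(n,\mathbb{C})$-centralizer $A_d(H)$.

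\textbf{Forward implication.} Assume $Z_d(H_1)$ and $Z_d(H_2)$ are conjugate in $\pi_d(SL(n,\mathbb{C}))$, say by $\overline{g}$, and lift $\overline{g}$ to $g\in SL(n,\mathbb{C})$. Because $\ker(\pi_d)$ is central, conjugation by $g$ commutes with $\pi_d$, so $g\,U_d(H_1)\,g^{-1}=\pi_d^{-1}\big(\overline{g}\,Z_d(H_1)\,\overline{g}^{-1}\big)=U_d(H_2)$. Thus conjugation by $g$ is a group isomorphism $U_d(H_1)\to U_d(H_2)$ fixing $\langle\xi I_n\rangle$ pointwise, and the second assertion of proposition \ref{3.assoc} (applied to $G_i:=U_d(H_i)$, $C:=\langle\xi I_n\rangle$, which have abelian quotients $A_d(H_i)$) shows that $M_d(H_1)$ and $M_d(H_2)$ are isometric.

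\textbf{Converse implication.} Assume $M_d(H_1)$ and $M_d(H_2)$ are isometric. By the preliminary observation these are the alternate modules attached to the $PSL(n,\mathbb{C})$-centralizers $A_d(H_1)$ and $A_d(H_2)$, so theorem \ref{classif1} produces $\overline{g}\in PSL(n,\mathbb{C})$ with $\overline{g}\,A_d(H_1)\,\overline{g}^{-1}=A_d(H_2)$. Lift $\overline{g}$ to $g\in SL(n,\mathbb{C})$; since $\ker(\pi_n)$ is central, $g$ conjugates $\pi_n^{-1}(A_d(H_1))=U_d(H_1)$ onto $\pi_n^{-1}(A_d(H_2))=U_d(H_2)$, and applying $\pi_d$ yields $\pi_d(g)\,Z_d(H_1)\,\pi_d(g)^{-1}=Z_d(H_2)$, i.e. $Z_d(H_1)$ and $Z_d(H_2)$ are conjugate in $\pi_d(SL(n,\mathbb{C}))$.

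\textbf{Where the work is.} There is no genuine obstacle here: the proposition is essentially a translation of theorem \ref{classif1} obtained by pushing conjugacies back and forth through the two central quotients $\pi_n$ and $\pi_d$. The only point deserving care is the identification $U_d(H)=\pi_n^{-1}(A_d(H))$ together with the resulting identification of $M_d(H)$ with the canonical alternate module of $A_d(H)$ as a centralizer in $PSL(n,\mathbb{C})$; once this is in place everything else is formal. As an alternative bypassing corollary \ref{3.stabparsgrp}, one may rerun the proofs of corollary \ref{conjugclass} and theorem \ref{classif1} verbatim with $U_d(H)$ in place of $U_n(H)$: propositions \ref{3.car} and \ref{3.diagonal} and lemma \ref{unicitext} apply unchanged to elements of $U_d(H)\leq U_n(H)$ (a lift in $U_n(H)$ of an element of $A_d(H)=(Z_n(H))_{(d)}$ automatically lies in $U_d(H)$ by lemma \ref{dtor}), and the restriction to $U_d(H)$ of the standard representation $\iota_H$ keeps the character computed in proposition \ref{3.car}, which is what drives the argument of corollary \ref{conjugclass}.
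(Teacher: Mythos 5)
Your proof is correct and follows the same route as the paper: both reduce the statement to theorem \ref{classif1} by identifying $M_d(H)$ with the canonical alternate module of the projection $A_d(H)$, which is itself a centralizer of an irreducible subgroup of $PSL(n,\mathbb{C})$ by corollary \ref{3.stabparsgrp}, and then transporting conjugacy back and forth through the central quotients $\pi_n$ and $\pi_d$. Your write-up simply makes explicit the identifications $U_d(H)=\pi_n^{-1}(A_d(H))=U_n(K)$ that the paper leaves implicit.
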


\begin{proof}
First remark that $Z_d(H_1)$ is conjugate to $Z_d(H_2)$ if and only if their respective projections $A_d(H_1)$ and $A_d(H_2)$ are conjugate in $PSL(n,\mathbb{C})$. Since $A_d(H_1)$ and $A_d(H_2)$ are centralizer of irreducible subgroups in $PSL(n,\mathbb{C})$, $A_d(H_1)$ and $A_d(H_2)$ are conjugate if and only if $M_d(H_1)$ and $M_d(H_2)$ are isometric applying theorem \ref{classif1}. \end{proof}

Likewise we may generalize theorem \ref{classif2}.

\begin{prop}\label{dclassif2}
Let $n\geq 1$, $d$ be a divisor of $n$ and  $(A,\phi)$ be an alternate module then there exists an irreducible subgroup $H$ in $SL(n,\mathbb{C})$ such that $(A,\phi)$ is isometric to $M_d(H)$ if and only if $n_{A,\phi}$ (the order of Lagrangians in $(A,\phi)$) divides $n$ and $A$ is of exponent dividing $d$.
\end{prop}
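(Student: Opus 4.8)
The plan is to mimic the proof of theorem \ref{classif2} while tracking the extra exponent constraint coming from the quotient $\pi_d$. Recall that by lemma \ref{dtor}, $A_d(H) = \pi_n(U_d(H)) = (Z_n(H))_{(d)}$, the $d$-torsion subgroup of $Z_n(H)$, and that $M_d(H) = (A_d(H), \phi_{H,d})$ is obtained by restricting the alternate form $\phi_H$ to this $d$-torsion subgroup. So the proposition is really a statement about which alternate modules arise as restrictions $\phi_{H|(Z_n(H))_{(d)}}$.

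First I would prove the "only if" direction. Suppose $(A,\phi)$ is isometric to $M_d(H)$ for some irreducible $H \leq SL(n,\mathbb{C})$. Since $A = A_d(H) = (Z_n(H))_{(d)}$ is a $d$-torsion group, its exponent divides $d$; that is immediate. For the Lagrangian condition, note that $M_d(H)$ is a submodule of $(Z_n(H),\phi_H)$, and a submodule of an $n$-subsymplectic module is $n$-subsymplectic (this is part of theorem \ref{classif2}, $1 \Leftrightarrow 3$, combined with corollary \ref{3.stabparsgrp} which says $A_d(H)$ is itself the centralizer of an irreducible subgroup of $PSL(n,\mathbb{C})$). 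Hence by theorem \ref{classif2} applied to the irreducible subgroup of $PSL(n,\mathbb{C})$ having $A_d(H)$ as centralizer, $n_{A,\phi}$ divides $n$.

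Next the "if" direction, which is the substantive part. Suppose $(A,\phi)$ has $n_{A,\phi} \mid n$ and $A$ has exponent dividing $d$. By theorem \ref{classif2} ($2 \Rightarrow 1$), there is an irreducible subgroup $H_0 \leq SL(n,\mathbb{C})$ with $(Z_n(H_0),\phi_{H_0})$ isometric to $(A,\phi)$. The issue is that this gives $(A,\phi)$ as a full $n$-centralizer's module, not as a $d$-torsion piece $M_d(H)$. But since $A$ has exponent dividing $d$, the whole group $Z_n(H_0)$ equals its own $d$-torsion, i.e. $Z_n(H_0) = (Z_n(H_0))_{(d)} = A_d(H_0)$ by lemma \ref{dtor}. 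Therefore $M_d(H_0) = (A_d(H_0), \phi_{H_0, d}) = (Z_n(H_0), \phi_{H_0})$, which is isometric to $(A,\phi)$. So we may simply take $H := H_0$.

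The main obstacle — or rather the point requiring care — is verifying that $\phi_{H,d}$ really is the restriction of $\phi_H$ to $A_d(H)$, so that the identification $M_d(H_0) = (Z_n(H_0),\phi_{H_0})$ when $Z_n(H_0)$ is $d$-torsion is legitimate; this follows from unwinding the definition of $M_d(H)$ (the form is defined via commutators of lifts in $U_d(H) \leq U_n(H)$, exactly as for $\phi_H$, using the same central element $\xi I_n$) together with the argument preceding proposition \ref{dclassif1} showing $A_d(H)$ is a genuine centralizer in $PSL(n,\mathbb{C})$ via corollary \ref{3.stabparsgrp}. Once that compatibility is in place, both directions reduce cleanly to theorem \ref{classif2}.
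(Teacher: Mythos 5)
Your proof is correct and follows essentially the same route as the paper: both directions reduce to theorem \ref{classif2} and lemma \ref{dtor}, and your converse argument (taking $H_0$ from theorem \ref{classif2} with $(Z_n(H_0),\phi_{H_0})$ isometric to $(A,\phi)$ and observing that $A_d(H_0)=Z_n(H_0)$ because the exponent divides $d$) is identical to the paper's. The only cosmetic difference is in the forward direction, where the paper bounds $n_{M_d(H)}$ directly by noting that a Lagrangian of $M_d(H)$ is isotropic in $(Z_n(H),\phi_H)$ and hence contained in a Lagrangian there, whereas you invoke corollary \ref{3.stabparsgrp} to realize $A_d(H)$ as a centralizer in its own right and then apply theorem \ref{classif2} to it; both are valid.
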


\begin{proof}
Let $H$ be an irreducible subgroup in $SL(n,\mathbb{C})$ then, by definition $A_d(H)=\pi_n(U_d(H))$ is of exponent dividing $d$ by lemma \ref{dtor}. Whence $M_d(H)$ is an alternate module of $d$-torsion. Furthermore $M_d(H)$ is isometrically embedded in $(Z_n(H),\phi_H)$. Let $L$ be Lagrangian of $M_d(H)$. It follows that $L$ is isotropic in $(Z_n(H),\phi_H)$, in particular, $L\leq L_0$ where $L_0$ is a Lagrangian in $(Z_n(H),\phi_H)$. It follows that $n_{M_d(H)}$ divides $n_{Z_n(H),\phi_H}$ which divides $n$ (by theorem \ref{classif2}).

\bigskip

Conversly, let $(A,\phi)$ be an alternate module of exponent dividing $d$ and such that $n_{A,\phi}$ divides $n$. By theorem \ref{classif2}, there exists an irreducible subgroup $H$ in $SL(n,\mathbb{C})$ such that $(Z_n(H),\phi_H)$ is isometric to $(A,\phi)$. Since $Z_n(H)$ is of exponent dividing $d$, it follows that $\pi_n(U_d(H))=Z_n(H)$ by lemma \ref{dtor}. In particular $A_d(H)=Z_n(H)$ and hence $M_d(H)=(Z_n(H),\phi_H)$ by definition. Hence $M_d(H)$ is isometric to $(A,\phi)$. \end{proof}

Before stating a straightforward corollary we recall a few facts about the character variety. Let $\Gamma$ be a Fuchsian group and $G$ be a semi-simple complex Lie group. It is known that in such case, $\chi^i(\Gamma, G)$ admits a structure of orbifold (see \cite{Sik}). Furthermore the local isotropy group of $G\cdot \rho$ is, up to conjugation,  $Z_G(\rho)/Z(G)$.

\begin{cor}\label{isocarvar}
Let $n\geq 1$, $d$ be a divisor of $n$ and  $\rho$ be an irreducible representation of $\Gamma$ into $G:=\pi_d(SL(n,\mathbb{C}))$. Then the isotropy group of $G\cdot \rho$ in the orbifold  $\chi^i(\Gamma, G)$ is an abelian group $A$ of exponent dividing $d$ such that there exists an abelian group $B$ of order $n$ with $A$ included in $B\times B$.
\end{cor}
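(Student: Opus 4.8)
The plan is to deduce this directly from the machinery already assembled. First I would identify the isotropy group: by the recalled fact about orbifold structures on $\chi^i(\Gamma,G)$ for Fuchsian $\Gamma$, the local isotropy group at $G\cdot\rho$ is, up to conjugation, $Z_G(\rho)/Z(G)$ where $G=\pi_d(SL(n,\mathbb{C}))$. Now $Z_d(SL(n,\mathbb{C}))$ has trivial center in the sense that $Z(\pi_d(SL(n,\mathbb{C})))$ is the image of $Z(SL(n,\mathbb{C}))$, and since $\rho$ is irreducible, $Z_G(\rho)$ already contains this center; thus the relevant group to understand is precisely $Z_G(\rho)=Z_d(H)$ where $H:=\pi_d^{-1}(\rho(\Gamma))$ is an irreducible subgroup of $SL(n,\mathbb{C})$ (as noted in Section~\ref{sec2}). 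More carefully, one must be a little careful about whether the isotropy is $Z_d(H)$ itself or a quotient thereof — but the statement only asserts that the isotropy group is \emph{an} abelian group $A$ with the stated embedding property, so it suffices to show $Z_d(H)$ has that form and note that any subgroup of it inherits the same property (a subgroup of a subgroup of $B\times B$ is a subgroup of $B\times B$).

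Next I would invoke Proposition~\ref{dclassif2}: the alternate module $M_d(H)=(A_d(H),\phi_{H,d})$ associated to $Z_d(H)$ has underlying group $A_d(H)=\pi_n(U_d(H))$, which by Lemma~\ref{dtor} equals the $d$-torsion $(Z_n(H))_{(d)}$ of $Z_n(H)$, hence is abelian of exponent dividing $d$. Moreover by Proposition~\ref{dclassif2} (or directly by Proposition~\ref{cardlag} applied to $Z_n(H)$ together with the fact that $M_d(H)$ embeds isometrically in $(Z_n(H),\phi_H)$), the order $n_{M_d(H)}$ of its Lagrangians divides $n$. Then Theorem~\ref{classif2} (the equivalence $2\Leftrightarrow 3$) gives an abelian group $B$ of order $n$ such that $M_d(H)$ embeds isometrically in the symplectic module $B\times B^*$; in particular $A_d(H)$ embeds as a group into $B\times B^*\cong B\times B$. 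This is exactly the asserted structure for $A:=A_d(H)$.

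Finally I would relate $A_d(H)$ to the honest isotropy group. The map $Z_d(H)\to PSL(n,\mathbb{C})$ induced by the further quotient $\pi_n\circ\pi_d^{-1}$ has image $A_d(H)$ and kernel contained in the image of $Z(SL(n,\mathbb{C}))$ in $\pi_d(SL(n,\mathbb{C}))$; so $Z_d(H)/Z(\pi_d(SL(n,\mathbb{C})))$ is a quotient of $Z_d(H)$ mapping isomorphically onto (a subgroup of) $A_d(H)$. Since subgroups and quotients of abelian groups of exponent dividing $d$ again have exponent dividing $d$, and since any subgroup of $B\times B$ is again a subgroup of $B\times B$, the isotropy group $A$ is abelian of exponent dividing $d$ and embeds in $B\times B$ for the same $B$ of order $n$. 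The main obstacle — really the only subtle point — is pinning down precisely which quotient of $Z_d(H)$ the isotropy group is and checking that the embedding/exponent conclusions survive passing to that quotient; once one observes that these properties are inherited by subgroups and quotients, the corollary follows formally by combining Lemma~\ref{dtor}, Proposition~\ref{dclassif2}, and Theorem~\ref{classif2}.
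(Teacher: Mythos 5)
Your argument is correct and follows essentially the same route as the paper: identify the isotropy group as $Z_G(\rho)/Z(G)=A_d(H)$ with $H=\pi_d^{-1}(\rho(\Gamma))$, use Lemma~\ref{dtor}/Proposition~\ref{dclassif2} to get exponent dividing $d$ and Lagrangians of order dividing $n$, and then embed into $B\times B$ (the paper cites Corollary~\ref{ncompatible}, which is just Theorem~\ref{classif2} repackaged, so this is the same step). The only cosmetic difference is that your first paragraph briefly suggests it suffices to control $Z_d(H)$ itself (whose exponent only divides $\mathrm{lcm}(n/d,d)$), but you correct this by ultimately working with the quotient $A_d(H)$, exactly as the paper does.
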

\begin{proof}
Let $H:=\pi_d^{-1}(\rho(\Gamma))$. Then $H$ is irreducible and $Z_G(\rho)=Z_d(H)$ by definition. By proposition \ref{dclassif2}, $M_d(H)$ is an alternate module of exponent dividing $d$ and with its Lagrangians of order dividing $n$. It follows that $A_d(H)=Z_n(H)$, the underlying group of $M_d(H)$, is of exponent dividing $d$. By corollary \ref{ncompatible}, there exists  an abelian group $B$ of order $n$  such that  $A_d(H)=Z_n(H)$ is included in $B\times B$. The result follows since $Z_G(\rho)/Z(G)=A_d(H)$. \end{proof}

In conclusion of this short section, we can say that studying the case of $PSL(n,\mathbb{C})$ is equivalent to studying the case of quotients of $SL(n,\mathbb{C})$ in general.

%[1]C.G.[2]Sik[3]F-L[4]Bor[5]Serre[6]Wall

\end{document}